\title[Stability conditions and birational geometry]{{\bf Stability conditions and birational geometry 
of projective surfaces}}
\date{}
\author{Yukinobu Toda}
\subjclass[2010]{Primary~18E30, Secondary~14E30}
\keywords{Bridgeland stability conditions; Minimal model program}
\DeclareFontFamily{U}{rsfs}{%
\skewchar\font127}
\DeclareFontShape{U}{rsfs}{m}{n}{%
<-6>rsfs5<6-8.5>rsfs7<8.5->rsfs10}{}
\DeclareSymbolFont{rsfs}{U}{rsfs}{m}{n}
\DeclareRobustCommand*\rsfs{%
\@fontswitch\relax\mathrsfs}
\theoremstyle{plain}
\newtheorem{thm}{Theorem}[section]
\newtheorem{prop}[thm]{Proposition}
\newtheorem{lem}[thm]{Lemma}
\newtheorem{defi}[thm]{Definition}
\newtheorem{rmk}[thm]{Remark}
\newtheorem{cor}[thm]{Corollary}
\newtheorem{step}{Step}
\newtheorem{sstep}{Step}
\newtheorem{case}{Case}
\newtheorem{ccase}{Case}
\newtheorem{prop-defi}[thm]{Proposition-Definition}
\newtheorem{thm-defi}[thm]{Theorem-Definition}
\newtheorem{lem-defi}[thm]{Lemma-Definition}
\newtheorem{question}[thm]{Question}
\newtheorem{exam}[thm]{Example}
\newdimen\argwidth
\def\db[#1\db]{
 \setbox0=\hbox{$#1$}\argwidth=\wd0
 \setbox0=\hbox{$\left[\box0\right]$}
  \advance\argwidth by -\wd0
 \left[\kern.3\argwidth\box0 \kern.3\argwidth\right]}
\newcommand{\aA}{\mathcal{A}}
\newcommand{\cC}{\mathcal{C}}
\newcommand{\dD}{\mathcal{D}}
\newcommand{\eE}{\mathcal{E}}
\newcommand{\fF}{\mathcal{F}}
\newcommand{\hH}{\mathcal{H}}
\newcommand{\mM}{\mathcal{M}}
\newcommand{\oO}{\mathcal{O}}
\newcommand{\pP}{\mathcal{P}}
\newcommand{\qQ}{\mathcal{Q}}
\newcommand{\sS}{\mathcal{S}}
\newcommand{\tT}{\mathcal{T}}
\newcommand{\uU}{\mathcal{U}}
\newcommand{\Hom}{\mathop{\rm Hom}\nolimits}
\newcommand{\dR}{\mathbf{R}}
\newcommand{\dL}{\mathbf{L}}
\newcommand{\id}{\textrm{id}}
\newcommand{\ch}{\mathop{\rm ch}\nolimits}
\newcommand{\Ext}{\mathop{\rm Ext}\nolimits}
\newcommand{\Coh}{\mathop{\rm Coh}\nolimits}
\newcommand{\cneq}{\mathrel{\raise.095ex\hbox{:}\mkern-4.2mu=}}
\newcommand{\eqcn}{\mathrel{=\mkern-4.5mu\raise.095ex\hbox{:}}}
\newcommand{\Cok}{\mathop{\rm Cok}\nolimits}
\newcommand{\Ex}{\mathop{\rm Ex}\nolimits}
\newcommand{\Stab}{\mathop{\rm Stab}\nolimits}
\newcommand{\PPer}{\mathop{\rm Per}\nolimits}
\newcommand{\Imm}{\mathop{\rm Im}\nolimits}
\newcommand{\Ker}{\mathop{\rm Ker}\nolimits}
\begin{document}
\maketitle

\begin{abstract}
We show that the minimal model program on any smooth
projective 
surface is realized as a variation of 
the moduli spaces of 
Bridgeland stable objects in the derived
category of coherent sheaves. 
\end{abstract}

\setcounter{tocdepth}{1}
\tableofcontents

\section{Introduction}
\subsection{Motivation}
This paper is a continuation of the previous paper~\cite{Todext}, 
in which the following question 
on the relationship between minimal model program (MMP)
and Bridgeland stability conditions~\cite{Brs1}
was addressed
(cf.~\cite[Question~1.1]{Todext}):
\begin{question}\label{intro:quest}
Let $X$ be a smooth projective variety and 
consider its MMP
\begin{align*}
X=X_1 \dashrightarrow X_2 \dashrightarrow \cdots \dashrightarrow X_N.
\end{align*}
Then is each $X_i$ a
moduli space of Bridgeland (semi)stable objects in the 
derived category of coherent sheaves on $X$, and MMP 
is interpreted as wall-crossing under a variation of 
Bridgeland stability conditions?
\end{question} 
The main result of~\cite{Todext}
was to answer the above question for the first 
step of MMP, i.e. extremal contraction, when 
$\dim X \le 3$. 
The purpose of this paper is to 
give a complete answer to the 
above question for further steps of MMP
when $\dim X=2$. 

\subsection{Bridgeland stability}
For a smooth projective variety $X$, 
Bridgeland~\cite{Brs1}
introduced the notion of stability conditions on 
$D^b \Coh(X)$, 
which provides a mathematical framework of Douglas's 
$\Pi$-stability~\cite{Dou2}. 
In~\cite{Brs1}, Bridgeland showed that
the set of stability conditions 
\begin{align}\label{intro:stab}
\Stab(X)
\end{align}
forms a complex manifold, 
and studied it 
when $X$ is a K3 surface or an abelian surface~\cite{Brs2}.
Since then there have been several studies
on the space (\ref{intro:stab}), or 
the associated moduli spaces of semistable objects
in the derived category, 
 when $X$ is a K3 surface or an abelian 
surface (cf.~\cite{HMS}, \cite{AB}, 
\cite{Tst3}, \cite{MYY}, \cite{MYY2}, \cite{YY}). 

On the other hand, there are few papers in the 
literature
in which the space (\ref{intro:stab})
is studied 
for an arbitrary projective surface $X$.
If $X$ is non-minimal, 
the birational geometry of $X$ is interesting, 
and 
we expect that it has a deep connection with the
space of stability conditions (\ref{intro:stab}). 
This idea is motivated by Bridgeland's work~\cite{Br1} on the 
construction of three dimensional flops as moduli spaces
of objects in the derived category. 
This result is not yet possible to realize in terms of 
Bridgeland stability conditions, since 
constructing them on projective 
3-folds turned out to be a
very difficult problem (cf.~\cite{BMT}).
However in the surface case, we have the examples 
of stability conditions constructed
by Arcara-Bertram~\cite{AB}.
Given the above background, we shall 
establish a 
rigorous statement
connecting two dimensional MMP and the space of Bridgeland
stability conditions (\ref{intro:stab}).

\subsection{Main result}
Our main result is formulated in the space 
$\Stab(X)_{\mathbb{R}}$, 
defined to be the 
`real part' of the space (\ref{intro:stab}). 
This is the space which fits into the Cartesian square
(cf.~Section~\ref{sec:append})
\begin{align}\label{Cart}
\xymatrix{ \ar@{}[rd]|{\square}
\Stab(X)_{\mathbb{R}} \ar[r] \ar[d]_{\Pi_{\mathbb{R}}} & 
\Stab(X) \ar[d]^{\Pi} \\
\mathrm{NS}(X)_{\mathbb{R}}
 \ar[r]^{- \int_X e^{-i\ast}} & N(X)_{\mathbb{C}}^{\vee}.
}
\end{align}
Recall that the 
ample cone $A(X) \subset \mathrm{NS}(X)_{\mathbb{R}}$
plays an important role in birational geometry. 
We will see that there is an open subset
\begin{align}\label{intro:U}
U(X) \subset \Stab(X)_{\mathbb{R}}
\end{align}
which is homeomorphic to 
$A(X)$ under the map $\Pi_{\mathbb{R}}$
of the diagram (\ref{Cart}). 
The subset (\ref{intro:U}) coincides with 
the set of $\sigma \in \Stab(X)_{\mathbb{R}}$ in which 
all the objects $\oO_x$ for $x\in X$ are stable. 
The closure $\overline{U}(X)$ is the analogue of the nef cone
of $X$, and expected to contain information of the birational 
geometry of $X$. 

Our purpose is to construct 
an open subset such as (\ref{intro:U})
associated to each birational morphism 
$f \colon X \to Y$,
and investigate how they are 
related under the change of $(f, Y)$. 
Here we fix the notation: 
for a Bridgeland stability 
condition $\sigma=(Z, \aA)$, 
we denote by $\mM^{\sigma}([\oO_x])$ the algebraic 
space which parameterizes $Z$-stable objects 
$E \in \aA$ with phase one and 
$\ch(E)=\ch(\oO_x)$
for $x\in X$ (cf.~\cite{Inaba}).
The following is the main theorem in this paper:  
\begin{thm}{\bf (Proposition~\ref{prop:main1}, 
Proposition~\ref{prop:main2})}
\label{thm:intro}
Let $X$ be a smooth projective complex surface. 
Then for
any smooth projective surface $Y$ and a 
birational morphism $f\colon X \to Y$,
there is a
connected open subset
\begin{align*}
U(Y) \subset \Stab(X)_{\mathbb{R}}
\end{align*}
satisfying the following conditions:
\begin{itemize}
\item If $f$ factors through a
blow-up at a point $Y' \to Y$, 
we have 
\begin{align}\label{intro:wall}
\overline{U}(Y) \cap \overline{U}(Y') \neq \emptyset
\end{align}
which is of real codimension one in $\Stab(X)_{\mathbb{R}}$. 
\item For any $\sigma \in U(Y)$, 
$\mM^{\sigma}([\oO_x])$ is isomorphic to $Y$. 
\end{itemize} 
\end{thm}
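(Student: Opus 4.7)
The plan is to proceed by induction on the length of a factorization of $f\colon X \to Y$ into successive blow-ups at smooth points of intermediate smooth projective surfaces, which is always possible since both $X$ and $Y$ are smooth projective surfaces. The base case $Y = X$ is handled by the existing description of $U(X) \subset \Stab(X)_{\mathbb{R}}$ already recalled in the text: it is the open region homeomorphic under $\Pi_{\mathbb{R}}$ to the ample cone $A(X)$, consisting of those $\sigma$ for which every $\oO_x$ is $\sigma$-stable of phase one.

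For the inductive step, factor $f$ as $X \xrightarrow{h} Y'' \xrightarrow{g} Y$, where $g$ contracts a single $(-1)$-curve $E \subset Y''$, and assume $U(Y'') \subset \Stab(X)_{\mathbb{R}}$ has already been constructed with $\mM^{\sigma}([\oO_x]) \simeq Y''$ for all $\sigma \in U(Y'')$. I would first locate the candidate wall: a distinguished face of $\overline{U}(Y'')$ which, under $\Pi_{\mathbb{R}}$, corresponds to the sub-face $g^{\ast}\overline{A(Y)} \subset \partial \overline{A(Y'')}$, equivalently the locus where the central charge vanishes on the numerical class of the exceptional curve $E$. On this wall, the objects $\oO_x$ for $x$ in the preimage $h^{-1}(E)$ become strictly semistable of phase one, while $\oO_x$ for $x \notin h^{-1}(E)$ remain stable. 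Next, I would cross the wall by tilting the heart, passing to a perverse t-structure on $D^b \Coh(X)$ adapted to the full contraction $f\colon X \to Y$ in the sense of Bridgeland, and define $U(Y)$ as the connected open chamber adjacent to this face on the opposite side from $U(Y'')$.

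For $\sigma \in U(Y)$, the $\sigma$-stable objects of class $[\oO_x]$ and phase one should be parametrized by closed points $y \in Y$: for $y \ne g(E)$ the representative is $\oO_{h^{-1}g^{-1}(y)}$, unchanged across the wall; while for $y_0 = g(E)$ it is a distinguished complex $F_{y_0}$ supported on $h^{-1}(E)$, realized as the derived pullback $\mathbf{L}f^{\ast}\oO_{y_0}$, which automatically has class $(0,0,1)$. To upgrade this bijection to an isomorphism $\mM^{\sigma}([\oO_x]) \simeq Y$ of algebraic spaces, I would construct a universal family on $Y \times X$ by a perverse pullback construction and verify the universal property, invoking representability results for moduli of complexes as in the text. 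The real codimension one claim~(\ref{intro:wall}) then follows from the Cartesian square~(\ref{Cart}), since the defining equation of the wall is a single linear condition on the central charge parameters.

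The hardest step will be verifying that the new complexes $F_{y_0}$ organize into an algebraic family over $Y$ realizing the correct scheme structure, rather than merely a bijection on closed points. A subsidiary difficulty is checking that $U(Y)$ so constructed is independent of the chosen factorization of $f$ through intermediate blow-ups and is genuinely connected, which amounts to a compatibility statement between wall-crossings associated with commuting blow-ups on intermediate surfaces.
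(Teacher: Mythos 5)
Your high-level picture (cross a codimension-one face of $\overline{U}(Y'')$ by tilting, with the new stable objects of class $[\oO_x]$ given by $\dL f^{\ast}\oO_y$) agrees with the paper's philosophy, but the proposal presupposes exactly what has to be proven. The step ``define $U(Y)$ as the connected open chamber adjacent to this face on the opposite side'' is not available for free: Bridgeland's deformation theorem only produces stability conditions in a small neighbourhood of a boundary point, and it tells you nothing about what their hearts are, whether they assemble into a \emph{global} open set homeomorphic to the full parameter region, or what their moduli spaces of class $[\oO_x]$ look like. The paper's actual work is to construct the heart beyond the wall explicitly for an arbitrary composite contraction: a bounded t-structure $\cC_{X/Y}^{0}$ on $\cC_{X/Y}=\ker \dR f_{!}$ with finitely many explicit generators (Section~\ref{sec:construction}), glued with $\aA_{\omega}$ on $Y$ to give $\aA_{\omega}(X/Y)$, and then to prove the Harder--Narasimhan property, noetherianity, and the support property for the pairs $(Z_{f^{\ast}\omega+D},\aA_{\omega}(X/Y))$, and finally to extend over irrational classes (Proposition~\ref{prop:liftY}); note that away from $U(X)$ the identification of nearby stability conditions cannot be done via Lemma~\ref{lem:Brlem}, which is why Section~\ref{sec:tech} needs a separate slicing argument. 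Your sketch of a ``perverse t-structure adapted to $f$ in the sense of Bridgeland'' is not the right heart for a composite contraction, and without a concrete heart you cannot run the argument that every stable object of class $[\oO_x]$ is $\dL f^{\ast}\oO_y$ (in the paper this uses the torsion pair $(\cC_{X/Y}^{0},\dL f^{\ast}\aA_{\omega})$ and positivity of $\Imm Z$ on $\cC_{X/Y}^{0}$), nor the openness of $\mM^{\sigma}([\oO_x])$ in Inaba's space (done via the open condition $\Hom(\dD_{X/Y},E)=0$). You also never address the support property, without which the constructed pairs do not even define points of $\Stab(X)$.

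Two further inaccuracies: the wall is not ``the locus where the central charge vanishes on the class of the exceptional curve'' (it never vanishes on a semistable class; only its imaginary part degenerates there, the pulled-back polarization becoming nef but not ample), and in the inductive step the objects that become strictly semistable are $\dL h^{\ast}\oO_{y''}$ for $y''\in E$, not the skyscrapers $\oO_x$, $x\in h^{-1}(E)$, which are already unstable throughout $U(Y'')$. Moreover, for the codimension-one statement you need more than ``a single linear condition'': you need that the wall lies in $\overline{U}(Y)$ as well as in $\overline{U}(Y'')$, which in the paper is Proposition~\ref{prop:main1}, proved by exhibiting $\aA_{\omega}(X/Y)$ as a tilt of $\aA_{h^{\ast}\omega}(X/Y')$ at the torsion pair generated by $\oO_{\widehat{C}}$ (Lemma~\ref{final:tilting}) together with the convergence criterion of Lemma~\ref{lem:limit}; in your setup this reduces to the legitimacy of your definition of $U(Y)$, which is the unproved point above. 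Finally, the correct image $\Pi_{\mathbb{R}}(U(Y))$ is the region $A^{\dag}(Y)$ of classes $f^{\ast}\omega+D$ with $D$ in an explicit cone of exceptional divisors; your proposal never identifies this extra direction $D$, yet it is what makes $U(Y)$ full-dimensional and the intersection of closures codimension one.
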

The above result shows that 
the space $\Stab(X)_{\mathbb{R}}$ is 
a fundamental object, beyond the ample 
cone $A(X)$,  in the study of birational 
geometry of $X$. Indeed, the geometry of any 
birational morphism 
$f \colon X \to Y$ is captured from the space
$\Stab(X)_{\mathbb{R}}$. 
Here is a simple example of Theorem~\ref{thm:intro}:
\begin{exam}\label{exam:P2}
Let $f \colon X \to \mathbb{P}^2$ be the blow-up 
at a point $p\in \mathbb{P}^2$. 
Let $H$ be the pull-back of a line in $\mathbb{P}^2$
and $C$ the exceptional curve of $f$. 
Then $[H]$ and $[C]$ span $\mathrm{NS}(X)_{\mathbb{R}}$. 
The subsets 
$U(X)$ and $U(\mathbb{P}^2)$ in $\Stab(X)_{\mathbb{R}}$ are homeomorphic to 
their images in $\Pi_{\mathbb{R}}$, 
and they are given by
\begin{align*}
&\Pi_{\mathbb{R}}(U(X))=\{ x[H]+y[C] : x>0, -x<y<0\} \\
&\Pi_{\mathbb{R}}(U(\mathbb{P}^2))
=\{x[H] +y[C] ; x>0, 0<y<x \}. 
\end{align*}
\end{exam}

The following is the obvious corollary of Theorem~\ref{thm:intro}:
\begin{cor}
Let $X$ be a smooth projective complex surface
and 
\begin{align*}
X=X_1 \to X_2 \to \cdots \to X_N
\end{align*}
a MMP, i.e. contractions of $(-1)$-curves. Then 
there is a continuous one parameter family of Bridgeland 
stability conditions $\{ \sigma_t\}_{t\in (0, 1)}$
on $D^b \Coh(X)$
and real numbers 
\begin{align*}
0=t_0<t_1< t_2< \cdots <t_{N}=1
\end{align*}
such that 
$X_i$ is isomorphic to $\mM^{\sigma_t}([\oO_x])$
for $t\in (t_{i-1}, t_i)$. 
\end{cor}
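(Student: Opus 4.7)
The plan is to apply Theorem~\ref{thm:intro} iteratively to the birational morphisms $g_i \colon X \to X_i$ obtained by composing the contractions $X_1 \to X_2 \to \cdots \to X_i$, and then to concatenate continuous arcs through the resulting open subsets of $\Stab(X)_{\mathbb{R}}$.

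First, for each $i = 1, \dots, N$ I apply Theorem~\ref{thm:intro} to $g_i$ to produce a connected open subset $U(X_i) \subset \Stab(X)_{\mathbb{R}}$ with the property that $\mM^{\sigma}([\oO_x]) \cong X_i$ for every $\sigma \in U(X_i)$. Because each $f_i \colon X_i \to X_{i+1}$ is the contraction of a $(-1)$-curve, it is isomorphic to a blow-up at a point, and the composition $g_{i+1} = f_i \circ g_i$ factors through this blow-up. The first clause of Theorem~\ref{thm:intro} then yields that $\overline{U}(X_i) \cap \overline{U}(X_{i+1})$ is nonempty and of real codimension one in $\Stab(X)_{\mathbb{R}}$ for each $i = 1, \dots, N-1$; pick a point $\tau_i$ in this intersection.

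Next, I choose values $0 = t_0 < t_1 < \cdots < t_N = 1$ and, for each $i$, construct a continuous arc $\gamma_i \colon [t_{i-1}, t_i] \to \Stab(X)_{\mathbb{R}}$ whose restriction to the open interval $(t_{i-1}, t_i)$ takes values in $U(X_i)$, with $\gamma_i(t_{i-1}) = \tau_{i-1}$ and $\gamma_i(t_i) = \tau_i$ (for $i = 1$ and $i = N$, the missing endpoint is chosen arbitrarily inside $U(X_i)$). Such arcs exist because $U(X_i)$ is open and connected, hence path-connected in the complex manifold $\Stab(X)_{\mathbb{R}}$, while $\tau_{i-1}$ and $\tau_i$ lie in its closure. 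Gluing the $\gamma_i$ produces the required continuous family $\{\sigma_t\}_{t \in (0,1)}$, and the second clause of Theorem~\ref{thm:intro} immediately yields $\mM^{\sigma_t}([\oO_x]) \cong X_i$ whenever $t \in (t_{i-1}, t_i)$.

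All the substantive work is already absorbed into Theorem~\ref{thm:intro}, so no serious obstacle remains; the corollary amounts to a topological gluing of the open charts $U(X_i)$ across the codimension-one walls they share. The only minor point worth noting is that at the transition times $t = t_i$ the object $\oO_x$ need not be $\sigma_{t_i}$-stable, but this is consistent with the statement, which only asserts the isomorphism of moduli spaces on the open intervals $(t_{i-1}, t_i)$.
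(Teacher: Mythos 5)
Your strategy is the intended one (the paper treats the corollary as an immediate consequence of Theorem~\ref{thm:intro} and writes out no proof), and the reduction to choosing wall points $\tau_i\in\overline{U}(X_i)\cap\overline{U}(X_{i+1})$ and concatenating arcs through the chambers $U(X_i)$ is the right skeleton. However, the one step you actually have to justify is exactly the one you dismiss with general topology: from the facts that $U(X_i)$ is open, connected (hence path-connected) and that $\tau_{i-1},\tau_i$ lie in its closure, it does \emph{not} follow that there is a continuous arc with interior in $U(X_i)$ and endpoints $\tau_{i-1},\tau_i$. Boundary points of a path-connected open subset of a manifold need not be accessible by paths from the inside (comb- or spiral-type boundaries are the standard counterexamples), so as written this step is a gap, and moreover you need accessibility of $\tau_i$ \emph{simultaneously} from $U(X_i)$ and from $U(X_{i+1})$, which an arbitrary choice of $\tau_i$ in the intersection of the closures does not provide.

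The gap is repaired by using the concrete structure of $U(Y)$ rather than point-set topology, and the needed ingredients are already in the paper. Choose $\tau_i$ not arbitrarily but in $\sigma_{X_i}(A^{\dag}_{f_i}(X_{i+1}))$, where $f_i\colon X_i\to X_{i+1}$ is the contraction; by Proposition~\ref{prop:main1} (together with (\ref{codim:one})) this set lies in $\overline{U}(X_i)\cap\overline{U}(X_{i+1})$. Writing $\tau_i=\sigma_{X_i}(g_{i+1}^{\ast}\omega+D)$ with $\omega$ ample on $X_{i+1}$ and $D\in C_{g_i,\omega^2}(X)$ (here $g_i\colon X\to X_i$ denotes the composed morphism), accessibility from the $U(X_{i+1})$ side is precisely the limit (\ref{fol:lim}) established in the proof of Proposition~\ref{prop:main1}: the family $\sigma_{g_{i+1}^{\ast}\omega+D+t\widehat{C}}$, $t\to+0$, lies in $U(X_{i+1})$ and converges to $\tau_i$. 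Accessibility from the $U(X_i)$ side follows from Proposition~\ref{prop:liftY}: $\sigma_{X_i}$ is continuous on $\overline{A}^{\dag}(X_i)$ and restricts to a homeomorphism of $A^{\dag}(X_i)$ onto $U(X_i)$, so the segment $s\mapsto g_i^{\ast}(f_i^{\ast}\omega+s\alpha)+D$, with $\alpha$ ample on $X_i$ and $s>0$ small, stays in $A^{\dag}(X_i)$ (the conditions $D\cdot c_1(F)>0$ for $F\in\cC_{X/X_i}^{0}$ do not involve $s$, and $D^2+(f_i^{\ast}\omega+s\alpha)^2\ge D^2+\omega^2>0$) and its image under $\sigma_{X_i}$ is an arc in $U(X_i)$ converging to $\tau_i$. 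With the arcs to the walls produced this way, and paths inside each path-connected $U(X_i)$ joining them, the concatenation argument and the final appeal to the second clause of Theorem~\ref{thm:intro} go through as you state.
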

The result of the above corollary completely 
answers Question~\ref{intro:quest}
for surfaces: any MMP of a smooth projective 
surface $X$ is realized as wall-crossing of 
Bridgeland moduli spaces of stable objects
in $D^b \Coh(X)$. 
The real numbers $t_i$ correspond to walls in 
this case. 

\subsection{Technical ingredients}
As we mentioned before, the space (\ref{intro:stab})
has not been studied for an arbitrary projective
surface $X$. Although it was studied for a 
K3 surface or an abelian surface in~\cite{Brs2}, there 
are several technical arguments in~\cite{Brs2}
which are not applied directly to an
arbitrary projective surface $X$. 
It seems that these technical issues have
prevented us to study the space (\ref{intro:stab})
beyond the case of K3 surfaces or abelian surfaces. 

One of the technical issues is to prove the support 
property of the stability conditions. This property is 
required in order to make the topology of the space (\ref{intro:stab})
desirable. It has now turned out that proving the 
support property is not an easy problem in general, 
and closely related to the Bogomolov-Gieseker (BG) type inequality 
of semistable objects in the derived category. 
In the case of K3 surfaces or abelian surfaces, proving 
the BG type inequality is easier: 
this follows from the Serre duality and the Riemann-Roch 
theorem. However this is not the case for an arbitrary 
projective surface, and we need to find 
a general argument
proving such an inequality. 
In the previous paper~\cite{Todext}, we 
established such a BG type inequality 
for semistable objects on an arbitrary 
projective surface, 
and 
proved the support property for some 
stability conditions in 
$\overline{U}(X)$. 
We use this result to show the support property 
for stability conditions contained in other 
subsets $\overline{U}(Y)$. 

Another issue is that the analysis of the boundary 
of $U(X)$ in the case of K3 surfaces in~\cite{Brs1} is not applied for 
an arbitrary projective surface $X$. 
In the former case, if we cross the codimension one boundary of $U(X)$, 
then the resulting stability condition is obtained by applying 
some autoequivalence of the derived category. 
In the latter case, this is not the case in general. 
Indeed we will see that, after crossing the boundary 
of $U(X)$ corresponding to a $(-1)$-curve contraction, 
then the resulting stability condition is 
not described by an autoequivalence but by
a certain tilting of the t-structure which appears at the 
boundary. We will describe the resulting tilting 
explicitly, and investigate the 
wall-crossing behavior of the open subsets $U(Y)$ 
in Theorem~\ref{thm:intro} in detail.

\subsection{Relation to existing works}
There are some recent works in which 
the relationship between Bridgeland stability 
conditions and MMP is discussed
(cf.~\cite{ABCH}, \cite{BaMa2}, \cite{Tst}, \cite{Todext}).
The works~\cite{ABCH}, \cite{BaMa2} treat the cases of 
$\mathbb{P}^2$ and K3 surfaces respectively. 
Also the works~\cite{Tst}, \cite{Todext} treat
the cases of local flops, contraction of a $(-1)$-curve, respectively. 
The result in this paper generalizes the result of~\cite{Todext}, 
and completely answer~\cite[Question~1.1]{Todext} for an 
arbitrary projective surface. 

The examples of 
Bridgeland stability conditions on arbitrary projective surfaces 
are given in~\cite{AB}. 
In the works~\cite{MYY}, \cite{MYY2}, \cite{YY}, 
\cite{Maci}, \cite{MaciMe}, 
the structure of walls and wall-crossing phenomena 
with respect to these stability conditions are studied. 
Our construction of $U(Y)$ provides other examples of
Bridgeland stability conditions on arbitrary non-minimal 
surfaces. 
It would be interesting to study the moduli 
spaces of semistable objects in $U(Y)$ with 
arbitrary numerical classes, and 
investigate their behavior under crossing the 
intersection of the closures (\ref{intro:wall}).

\subsection{Plan of the paper}
In Section~\ref{sec:append}, we give some background on 
Bridgeland stability conditions, especially on projective
surfaces. 
In Section~\ref{sec:construction}, we construct some 
t-structures on relevant triangulated categories. 
In Section~\ref{sec:proof}, we give a proof of Theorem~\ref{thm:intro}. 
In Section~\ref{sec:tech}, we prove some technical results
which are stated in previous sections.

\subsection{Acknowledgement}
This work is supported by World Premier 
International Research Center Initiative
(WPI initiative), MEXT, Japan. This work is also supported by Grant-in Aid
for Scientific Research grant (22684002), 
and partly (S-19104002),
from the Ministry of Education, Culture,
Sports, Science and Technology, Japan.

\subsection{Notation and convention}
In this paper, all the varieties are defined over 
$\mathbb{C}$. 
For a triangulated 
category $\dD$ and 
a set of objects $\sS \subset \dD$, 
we denote by $\langle \sS \rangle$ 
the smallest extension closed subcategory of 
$\dD$ which contains objects in $\sS$. 
The category $\langle S \rangle$ is called 
the extension closure of $S$. 
For the heart
of a bounded t-structure 
$\aA \subset \dD$, 
we denote by $\hH^{i}_{\aA}(\ast)$ the $i$-th cohomology 
functor with respect to the t-structure with heart $\aA$. 
If $\sS$ is contained in $\aA$, 
the right orthogonal complement of $\sS$ in $\aA$
is defined by 
\begin{align*}
\sS^{\perp} \cneq \{ E\in \aA : 
\Hom(\sS, E)=0\}. 
\end{align*}

\section{Background}\label{sec:append}
In this section, we briefly recall 
Bridgeland stability conditions, 
 and prepare
some results which will be needed in the later sections.  
\subsection{Bridgeland stability conditions}
Let $X$ be a smooth projective variety and $N(X)$ the 
numerical Grothendieck group of $X$. 
This is the quotient of the usual Grothendieck 
group $K(X)$ by the subgroup of $E\in K(X)$
with $\chi(E, F)=0$ for any $F\in K(X)$, 
where $\chi(E, F)$ is the Euler pairing
\begin{align*}
\chi(E, F) \cneq \sum_{i\in \mathbb{Z}}
(-1)^i \dim \Ext^i(E, F). 
\end{align*}
\begin{defi}\label{lem:pair2} \emph{(\cite{Brs1})}
A stability condition on $X$ is a pair 
\begin{align}\label{pair2}
(Z, \aA), \quad \aA \subset D^b \Coh(X)
\end{align}
where $Z \colon N(X) \to \mathbb{C}$
is a group homomorphism and $\aA$ is the heart of a 
bounded t-structure, 
such that the following conditions hold: 
\begin{itemize}
\item For any non-zero $E\in \aA$, we have 
\begin{align}\label{pro:1}
Z(E) \in \{ r\exp(i\pi \phi) : r>0, \phi \in (0, 1] \}.
\end{align}
\item (Harder-Narasimhan property)
For any $E\in \aA$, there is a filtration in $\aA$
\begin{align*}
0=E_0 \subset E_1 \subset \cdots \subset E_N
\end{align*}
such that each subquotient $F_i=E_i/E_{i-1}$ is 
$Z$-semistable with 
$\arg Z(F_i)> \arg Z(F_{i+1})$. 
\end{itemize}
\end{defi}
Here an object $E\in \aA$ is $Z$-\textit{(semi)stable} 
if for any subobject $0\neq F \subsetneq E$
we have 
\begin{align*}
\arg Z(F) <(\le) \arg Z(E). 
\end{align*}
The group homomorphism $Z$
is called a \textit{central charge}. 
The central charges we use in this paper 
are of the form
\begin{align}\label{Zomega}
Z_{\omega}(E)= -\int_{X} e^{-i\omega} \ch(E)
\end{align}
for $\omega \in \mathrm{NS}(X)_{\mathbb{R}}$. 
If $\dim X=2$, we have
\begin{align}\label{def:Z}
Z_{\omega}(E)=
-\ch_2(E) + \frac{\omega^2}{2} \ch_0(E)
+ i \ch_1(E) \cdot \omega. 
\end{align}
We fix a norm $\lVert \ast \rVert$
on the finite dimensional vector 
space $N(X)_{\mathbb{R}}$. 
We need to put the following technical condition 
on the stability conditions: 
\begin{defi}\label{def:sprop}
A stability condition (\ref{pair2})
satisfies the support property if there 
is a constant $K>0$ such 
that for any 
non-zero $Z$-semistable object $E\in \aA$, we have  
\begin{align*}
\frac{\lVert E \rVert}{\lvert Z(E) \rvert} < K. 
\end{align*}
\end{defi}
The set $\Stab(X)$
is defined to be the set of stability conditions on 
$D^b \Coh(X)$ satisfying the support property. 
The following is the main result of~\cite{Brs1}. 
(Also see~\cite{K-S}.)
\begin{thm}\emph{(\cite{Brs1})}\label{thm:Brmain}
There is a natural topology 
on $\Stab(X)$ such that the forgetting map 
\begin{align*}
\Pi \colon 
\Stab(X) \to N(X)_{\mathbb{C}}^{\vee}
\end{align*}
sending $(Z, \aA)$ to $Z$ is a local homeomorphism. 
\end{thm}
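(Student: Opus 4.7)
The plan is to follow Bridgeland's original strategy from~\cite{Brs1}, whose key insight is to replace the heart $\aA$ by the more flexible data of a slicing so that central charges and t-structures can be deformed simultaneously. First I would reformulate a stability condition as a pair $(Z,\pP)$, where $\pP=\{\pP(\phi)\}_{\phi\in\mathbb{R}}$ is a collection of full additive subcategories of $D^b \Coh(X)$ satisfying $\pP(\phi+1)=\pP(\phi)[1]$, the orthogonality $\Hom(\pP(\phi_1),\pP(\phi_2))=0$ for $\phi_1>\phi_2$, and the existence of a Harder--Narasimhan decomposition of every non-zero object into semistable pieces of decreasing phases, with $Z(E)=m(E)\exp(i\pi\phi)$ for $0\neq E\in\pP(\phi)$ and $m(E)>0$. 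The equivalence with Definition~\ref{lem:pair2} is that $\aA=\pP((0,1])$ and $\pP(\phi)$ for $\phi\in(0,1]$ consists of the $Z$-semistable objects of phase $\phi$; the HN property translates directly.

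Next I would equip $\Stab(X)$ with the generalized metric
\[
d(\sigma_1,\sigma_2) \cneq \sup_{0\neq E}\Bigl\{|\phi^+_1(E)-\phi^+_2(E)|,\ |\phi^-_1(E)-\phi^-_2(E)|,\ \bigl|\log\tfrac{m_2(E)}{m_1(E)}\bigr|\Bigr\},
\]
where $\phi^{\pm}_i(E)$ and $m_i(E)$ denote the extremal HN phases and total mass with respect to $\sigma_i$. Continuity of $\Pi$ under the induced topology is automatic, since on $Z_0$-semistable objects the support property bounds $|Z_1(E)-Z_2(E)|$ in terms of $d(\sigma_1,\sigma_2)$ and $\lVert E\rVert$. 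The main step is a \emph{deformation lemma}: given $\sigma_0=(Z_0,\pP_0)\in\Stab(X)$ with support property constant $K$, there exist $\epsilon,\eta>0$ such that every group homomorphism $Z\colon N(X)\to\mathbb{C}$ with $|Z(E)-Z_0(E)|<\eta\lVert E\rVert$ for all $E$ lifts to a unique stability condition $(Z,\pP)$ with $d(\pP,\pP_0)<\epsilon$.

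To prove this lemma I would fix a phase $\phi$ and regard $\pP_0((\phi-1,\phi])$ as a quasi-abelian category with strict short exact sequences. For $\eta$ sufficiently small the perturbed $Z$ remains a stability function on this quasi-abelian category, so one can attempt Harder--Narasimhan filtrations with respect to $Z$; the support property is what prevents an infinite chain of destabilizations whose $|Z|$-values accumulate at the origin, ensuring termination. Patching the resulting filtrations across all $\phi$ produces the new slicing $\pP$, and uniqueness follows because any slicing close to $\pP_0$ must agree with $\pP$ on each quasi-abelian sector. The hard part will be precisely this quasi-abelian HN construction together with the verification that the new slicing still satisfies the support property with a controlled constant; this is where the underlying linear-algebraic content of the support property (equivalently, a quadratic form negative on $\ker Z_0$ as in~\cite{K-S}) is essential. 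Once the deformation lemma is established, $\Pi$ is injective on the $\epsilon$-ball around $\sigma_0$, its image contains an open neighborhood of $Z_0$, and the inverse is continuous by construction, giving the desired local homeomorphism.
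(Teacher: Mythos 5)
The first thing to note is that the paper contains no proof of this statement: Theorem~\ref{thm:Brmain} is quoted verbatim from Bridgeland~\cite{Brs1}, with the support-property formulation of $\Stab(X)$ for which the paper also points to~\cite{K-S}. Your outline is exactly the strategy of that cited proof: re-encode a stability condition as a pair $(Z,\pP)$ with $\pP$ a slicing, topologize $\Stab(X)$ by the generalized metric $d$, and reduce everything to a deformation lemma saying that a central charge close to $Z_0$ lifts uniquely to a stability condition whose slicing is close to $\pP_0$. The equivalence $\aA=\pP((0,1])$, the metric, and the role of the support property (equivalently the quadratic form of~\cite{K-S}) in making $\Pi$ a local homeomorphism onto an \emph{open} subset of all of $N(X)_{\mathbb{C}}^{\vee}$, rather than onto an open subset of a linear subspace as in Bridgeland's original local-finiteness setting, are all correctly identified.

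There is, however, one step that fails as literally written, and it sits at the heart of the deformation lemma. You fix a phase $\phi$ and claim that for $\eta$ small the perturbed $Z$ is again a stability function on $\pP_0((\phi-1,\phi])$. This length-one sector is the heart of a bounded t-structure (abelian, not merely quasi-abelian), and it contains semistable objects whose $\sigma_0$-phases are arbitrarily close to the endpoint $\phi-1$ or equal to $\phi$; the bound $|Z(E)-Z_0(E)|<\eta\lVert E\rVert\le \eta K\lvert Z_0(E)\rvert$ only guarantees a small uniform rotation of $Z_0(E)$, which is enough to push such near-boundary objects out of the half-plane attached to that heart, no matter how small $\eta$ is. Relatedly, the new categories $\pP(\psi)$ are in general \emph{not} contained in the old heart $\pP_0((\phi-1,\phi])$ for $\psi$ near its endpoints, so patching HN filtrations computed inside the old hearts cannot by itself produce the new slicing. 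Bridgeland's proof avoids both problems by working with thin quasi-abelian sectors $\pP_0((a,b))$ with $b-a<1$ (in fact with $2\epsilon$ of slack) and by \emph{defining} $\pP(\psi)$ to be the $Z$-semistable objects of the quasi-abelian category $\pP_0((\psi-\epsilon,\psi+\epsilon))$ --- exactly the mechanism this paper reuses in Step~2 of the proof of Proposition~\ref{prop:liftY} --- and then verifying the slicing axioms and the HN property across overlapping sectors. The remaining substantive work, which your sketch defers, is the termination of the HN construction in these sectors and the check that the deformed condition satisfies the support property with a controlled constant; with those in place, injectivity of $\Pi$ near $\sigma_0$, openness of the image, and continuity of the inverse follow as you say.
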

We are interested in the set of 
stability conditions whose central charges are of 
the form (\ref{Zomega}). 
So we restrict our attention to 
the space $\Stab(X)_{\mathbb{R}}$ defined as follows: 
\begin{defi}
We define $\Stab(X)_{\mathbb{R}}$ 
to be the Cartesian square
\begin{align}\label{Car2}
\xymatrix{ \ar@{}[rd]|{\square}
\Stab(X)_{\mathbb{R}} \ar[r] \ar[d]_{\Pi_{\mathbb{R}}} & \Stab(X) 
\ar[d]^{\Pi} \\
\mathrm{NS}(X)_{\mathbb{R}}
 \ar[r]^{- \int_X e^{-i\ast}} & N(X)_{\mathbb{C}}^{\vee}.
}
\end{align}
Here the bottom map takes $\omega \in \mathrm{NS}(X)_{\mathbb{R}}$
to the central charge $Z_{\omega}$
given by (\ref{Zomega}). 
\end{defi}

\subsection{Gluing t-structures}\label{subsec:glue}
We use the following \textit{gluing t-structure} method 
in order to produce several t-structures. 
Let 
\begin{align*}
\cC \stackrel{i}{\to} \dD \stackrel{j}{\to} \eE
\end{align*}
be an \textit{exact triple} of triangulated categories. 
Namely $\cC$, $\dD$ and $\eE$ are triangulated categories, 
$i$, $j$ are exact functors with 
$j\circ i=0$. Both of $i$ and $j$ have the left and the right adjoint
functors, 
which satisfy some axioms. For the detail, see~\cite[IV.~Ex.~2]{GM}. 

Let 
\begin{align*}
(\cC^{\le 0}, \cC^{\ge 0}), \quad 
(\eE^{\le 0}, \eE^{\ge 0})
\end{align*}
be bounded t-structures on $\cC$ and $\eE$ respectively. 
Then they induce the bounded
t-structure on $\dD$ 
whose heart is given by 
\begin{align*}
\{ E \in \dD : 
j(E) \in \eE^{0}, \ 
\Hom(i(\cC^{<0}), E)= \Hom(E, i(\cC^{>0}))=0 \}. 
\end{align*}
Here $\eE^{0} \cneq \eE^{\le 0} \cap \eE^{\ge 0}$ is the heart
on $\eE$. 
For the detail, see~\cite[n.~1.4]{BBD}, \cite[IV.~Ex.~4]{GM}.

\subsection{Perverse t-structure}\label{subsec:perverse}
Let $X$ and $Y$ be smooth projective 
surfaces, and $f$ a birational morphism 
\begin{align*}
f \colon X\to Y. 
\end{align*}
We recall the construction of the perverse
t-structure associated to the above data, following~\cite{Br1}, \cite{MVB}. 

It is well-known that the derived pull-back
\begin{align*}
\dL f^{\ast} \colon D^b \Coh(Y) \to D^b \Coh(X)
\end{align*}
is fully-faithful. The functor $\dL f^{\ast}$ 
has the right adjoint $\dR f_{\ast}$ and the left 
adjoint $\dR f_{!}$, 
\begin{align*}
\dR f_{\ast}, \dR f_{!} \colon D^b \Coh(X) \to D^b \Coh(Y)
\end{align*}
where $\dR f_{!}$ is given by 
\begin{align*}
\dR f_{!} E= \dR f_{\ast}(E \otimes \omega_X) \otimes \omega_Y^{-1}. 
\end{align*}
We define the triangulated subcategories 
$\cC_{X/Y}$, $\dD_{X/Y}$ 
in $D^b \Coh(X)$ to be
\begin{align*}
\cC_{X/Y} &\cneq \{ E \in D^b \Coh(X) : 
\dR f_{!}E \cong 0\} \\
\dD_{X/Y} &\cneq \{ E \in D^b \Coh(X) :
\dR f_{\ast}E \cong 0\}. 
\end{align*}
They are related by $\cC_{X/Y} \otimes \omega _X = \dD_{X/Y}$. 
Here we only use the latter category 
$\dD_{X/Y}$.
The category $\cC_{X/Y}$ will be treated in the next section. 
 
We have the sequences of exact functors
\begin{align*}
\dD_{X/Y} \to D^b \Coh(X) \stackrel{\dR f_{\ast}}{\to} D^b \Coh(Y)
\end{align*}
where the left functor is the natural inclusion. 
The above sequence determines an exact triple, and 
the standard t-structure on $D^b \Coh(X)$ induces a 
t-structure 
\begin{align*}
(\dD_{X/Y}^{\le 0}, \dD_{X/Y}^{\ge 0})
\end{align*}
on $\dD_{X/Y}$ (cf.~\cite[Lemma~3.1]{Br1}).
By gluing the standard t-structure on $D^b \Coh(Y)$
and the shifted
t-structure $(\dD_{X/Y}^{\le -1}, \dD_{X/Y}^{\ge -1})$, we 
have the heart of the perverse t-structure (cf.~\cite{Br1}, \cite{MVB})
\begin{align*}
\PPer(X/Y) \subset D^b \Coh(X). 
\end{align*}
The perverse heart $\PPer(X/Y)$
is known to be 
equivalent to the module category of a 
certain sheaf of non-commutative 
coherent $\oO_Y$-algebras (cf.~\cite{MVB}). In particular, 
it is a noetherian abelian category. 
Also if $f=\id_X \colon X \to X$, the category $\PPer(X/X)$
coincides with $\Coh(X)$.

\subsection{Tilting of $\PPer(X/Y)$}\label{tilt:Y}
Let us take 
\begin{align*}
\omega \in \mathrm{NS}(Y)_{\mathbb{Q}}
\end{align*}
such that $\omega$ is a $\mathbb{Q}$-ample class. 
We have the following slope function, 
\begin{align*}
\mu_{f^{\ast}\omega} \colon \PPer(X/Y) \setminus \{0\} 
\to \mathbb{Q} \cup \{ \infty \}
\end{align*}
by setting $\mu_{f^{\ast}\omega}(E)=\infty$
if $\ch_0(E)=0$, and 
\begin{align*}
\mu_{f^{\ast}\omega}(E)= 
\frac{\ch_1(E) \cdot f^{\ast}\omega}{\ch_0(E)}
\end{align*}
if $\ch_0(E) \neq 0$. 
The above slope function determines a 
weak stability condition on $\PPer(X/Y)$, 
which satisfies the Harder-Narasimhan property
(cf.~\cite[Lemma~3.6]{Todext}).

We define 
the pair of subcategories $(\tT_{f^{\ast}\omega}, \fF_{f^{\ast}\omega})$
in $\PPer(X/Y)$
to be
\begin{align*}
\tT_{f^{\ast}\omega} &\cneq \langle E : E \mbox{ is }\mu_{f^{\ast}\omega}
\mbox{-semistable with } \mu_{f^{\ast}\omega}(E)>0 \rangle \\
\fF_{f^{\ast}\omega} &\cneq \langle E : E \mbox{ is }\mu_{f^{\ast}\omega}
\mbox{-semistable with } \mu_{f^{\ast}\omega}(E) \le 0 \rangle. 
\end{align*}
The above pair is a torsion pair~\cite{HRS} in $\Coh(X)$. 
The associated tilting is 
\begin{align}\label{A:tilt}
\aA_{f^{\ast}\omega} \cneq \langle 
\fF_{f^{\ast}\omega}[1], \tT_{f^{\ast}\omega} \rangle. 
\end{align}
By a general theory of tilting, the 
category $\aA_{f^{\ast}\omega}$ is the heart of 
a bounded t-structure on $D^b \Coh(X)$. 
In particular, it is an abelian category. 
Later we will need the following 
property on the above category. 
\begin{lem}\label{lem:pstab}
We have the embedding
\begin{align*}
\dL f^{\ast} \aA_{\omega} \subset \aA_{f^{\ast}\omega}. 
\end{align*}
\end{lem}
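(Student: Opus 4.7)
The plan is to exploit the tilt descriptions $\aA_{\omega} = \langle \fF_{\omega}[1], \tT_{\omega}\rangle$ and $\aA_{f^{\ast}\omega} = \langle \fF_{f^{\ast}\omega}[1], \tT_{f^{\ast}\omega}\rangle$. Since $\dL f^{\ast}$ is triangulated and sends $\Coh(Y)$ into $\PPer(X/Y)$ by the gluing construction of the perverse heart, it will suffice to prove the two inclusions
\begin{align*}
\dL f^{\ast}\, \tT_{\omega} \subset \tT_{f^{\ast}\omega}, \qquad \dL f^{\ast}\, \fF_{\omega} \subset \fF_{f^{\ast}\omega}
\end{align*}
inside $\PPer(X/Y)$.

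The key tools are three: (i) the functor $\dR f_{\ast} \colon \PPer(X/Y) \to \Coh(Y)$ is t-exact, which is immediate from the gluing; (ii) the adjunction identity $\dR f_{\ast}\dL f^{\ast} = \id_{\Coh(Y)}$, from $\dR f_{\ast}\oO_X = \oO_Y$ and the projection formula; and (iii) the slope matching
\begin{align*}
\mu_{\omega}(\dR f_{\ast} Q) = \mu_{f^{\ast}\omega}(Q) \quad \text{whenever } \ch_0(Q) \neq 0,
\end{align*}
which follows from Grothendieck--Riemann--Roch using $f_{\ast} K_{X/Y} = 0$ (as $K_{X/Y}$ is a sum of exceptional divisors) and the projection formula $f_{\ast}\ch_1(Q) \cdot \omega = \ch_1(Q) \cdot f^{\ast}\omega$.

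For $\dL f^{\ast} T \in \tT_{f^{\ast}\omega}$ with $T \in \tT_{\omega}$, I take the final HN quotient $G$ of $\dL f^{\ast} T$ in $\PPer(X/Y)$ with respect to $\mu_{f^{\ast}\omega}$ and aim to show $\mu_{f^{\ast}\omega}(G) > 0$. By t-exactness of $\dR f_{\ast}$ and (ii), $\dR f_{\ast} G$ is a quotient of $T$ in $\Coh(Y)$ and hence lies in $\tT_{\omega}$. If $\ch_0(G) \neq 0$ then (iii) yields $\mu_{f^{\ast}\omega}(G) = \mu_{\omega}(\dR f_{\ast} G) > 0$; if $\ch_0(G) = 0$ then $\mu_{f^{\ast}\omega}(G) = \infty$ by the convention for rank-zero objects.

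For $\dL f^{\ast} F \in \fF_{f^{\ast}\omega}$ with $F \in \fF_{\omega}$, observe first that $F$ is torsion-free (torsion sheaves belong to $\tT_{\omega}$), so on the smooth surface $Y$ one has $\TOR_1(F, \oO_X) = 0$ and hence $\dL f^{\ast} F = f^{\ast} F$ is a genuine coherent sheaf. Let $K \hookrightarrow f^{\ast} F$ be a nonzero subobject in $\PPer(X/Y)$; applying $\dR f_{\ast}$ produces a subsheaf $\dR f_{\ast} K \hookrightarrow F$ in $\Coh(Y)$. If $\dR f_{\ast} K \neq 0$, torsion-freeness forces it to have positive rank with $\mu_{\omega}(\dR f_{\ast} K) \le 0$, so $\mu_{f^{\ast}\omega}(K) \le 0$ by (iii). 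The delicate case is $\dR f_{\ast} K = 0$: the gluing identification $\PPer(X/Y) \cap \dD_{X/Y} = (\Coh(X) \cap \dD_{X/Y})[1]$ forces $K \cong H[1]$ for some contracted $H \in \Coh(X)$, and then $\Hom(K, f^{\ast} F) = \Ext^{-1}(H, f^{\ast} F) = 0$ (both entries being sheaves in degree zero) yields $K = 0$. This last perverse step is the main technical obstacle, and it is precisely what makes the perverse (rather than standard) tilt the correct choice in the definition of $\aA_{f^{\ast}\omega}$.
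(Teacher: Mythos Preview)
Your proof is correct and follows essentially the same strategy as the paper: both arguments rest on the t-exactness of $\dR f_{\ast}\colon \PPer(X/Y)\to\Coh(Y)$, the identity $\dR f_{\ast}\dL f^{\ast}=\id$, and the slope comparison $\mu_{f^{\ast}\omega}(Q)=\mu_{\omega}(\dR f_{\ast}Q)$, with a separate treatment of the case where the pushforward vanishes.

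The organizational difference is that the paper proves the slightly stronger statement that $\dL f^{\ast}$ takes torsion-free $\mu_{\omega}$-semistable sheaves to $\mu_{f^{\ast}\omega}$-semistable objects, whereas you verify the inclusions $\dL f^{\ast}\tT_{\omega}\subset\tT_{f^{\ast}\omega}$ and $\dL f^{\ast}\fF_{\omega}\subset\fF_{f^{\ast}\omega}$ directly via the extremal HN pieces; your formulation is what is actually needed. For the delicate case $\dR f_{\ast}K=0$, the paper uses the left adjoint $\dR f_{!}$ together with torsion-freeness of $M$ to obtain $\Hom(K,\dL f^{\ast}M)\cong\Hom(\dR f_{!}K,M)=0$, while you instead observe that $f^{\ast}F$ is an honest sheaf (since $F$ is torsion-free on a smooth surface) and that $K\cong H[1]$, so $\Hom(H[1],f^{\ast}F)=0$ for degree reasons. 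Both routes are valid; yours avoids invoking $\dR f_{!}$ at the cost of the small computation that $L^{1}f^{\ast}F=0$.
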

\begin{proof}
It is enough to show 
the following statements: 
\begin{itemize}
\item For any $M \in \Coh(Y)$, we have 
$\dL f^{\ast}M \in \PPer(X/Y)$. 
\item If $M$ is a torsion free $\mu_{\omega}$-semistable 
sheaf on $Y$, then $\dL f^{\ast}M \in \PPer(X/Y)$
is $\mu_{f^{\ast}\omega}$-semistable. 
\end{itemize}
We first show the first statement. 
By the projection formula, we have 
\begin{align*}
\dR f_{\ast} \dL f^{\ast}M \cong M. 
\end{align*}
Also we have 
\begin{align*}
\Hom(\dL f^{\ast}M, \dD_{X/Y}^{\ge 0}) \cong 0
\end{align*}
by adjunction. Let us take $F \in \dD_{X/Y}^{\le -2}$. We have 
\begin{align*}
\Hom(F, \dL f^{\ast}M) & \cong \Hom(\dR f_{!}F, M) \\
& \cong 0
\end{align*}
since $\dR f_{!}F \in \Coh^{\le -1}(Y)$. Therefore 
$\dL f^{\ast}M \in \PPer(X/Y)$ follows
by the definition of the gluing. 

As for the second statement, let us take an exact sequence in $\PPer(X/Y)$
\begin{align*}
0 \to F \to \dL f^{\ast}M \to G \to 0
\end{align*}
such that $F$ and $G$ are non-zero. We need to show that 
\begin{align}\label{need:show}
\mu_{f^{\ast}\omega}(F) \le \mu_{f^{\ast}\omega}(G). 
\end{align}
Applying $\dR f_{\ast}$, we obtain the exact 
sequence in $\Coh(Y)$
\begin{align*}
0 \to \dR f_{\ast}F \to M \to \dR f_{\ast}G \to 0. 
\end{align*}
If both of $\dR f_{\ast}(F)$ and $\dR f_{\ast}(G)$ are non-zero, 
the inequality (\ref{need:show}) holds 
by the $\mu_{\omega}$-stability of $E$
and noting $\mu_{f^{\ast}\omega}(\dL f^{\ast}(\ast))= \mu_{\omega}(\ast)$
for non-zero $\ast$. If $\dR f_{\ast}G=0$, then 
$\mu_{f^{\ast}\omega}(G)=\infty$ and (\ref{need:show}) holds. 
Suppose that $\dR f_{\ast}F=0$. 
Then $F \in \dD_{X/Y} \cap \Coh(X)[1]$, hence 
$\dR f_{!}F \in D^{\le 0}(\Coh(Y))$ 
and its zero-th cohomology is a zero dimensional sheaf. 
By adjunction and the torsion freeness 
of $M$, this implies 
\begin{align*}
\Hom(F, \dL f^{\ast}M) &\cong \Hom(\dR f_{!}F, M) \\
&\cong 0
\end{align*}
which is a contradiction. 
\end{proof}

\subsection{Bridgeland stability conditions on projective surfaces}\label{subsec:surface}
Let $f \colon X \to Y$ be a birational 
morphism between smooth projective surfaces, and 
$\omega \in \mathrm{NS}(Y)_{\mathbb{Q}}$
is ample. 
We consider the pair 
\begin{align*}
\sigma_{f^{\ast}\omega} \cneq 
(Z_{f^{\ast}\omega}, \aA_{f^{\ast}\omega}) 
\end{align*}
where 
$Z_{f^{\ast}\omega} \colon N(X) \to \mathbb{C}$ is the central charge 
defined by (\ref{def:Z}), and 
$\aA_{f^{\ast}\omega}$ is the heart of a bounded t-structure 
on $D^b \Coh(X)$
constructed in 
the previous subsection. 
We have the following proposition. 
\begin{prop}\label{prop:stab}
Suppose that 
$f$ satisfies one of the following conditions: 
\begin{itemize}
\item $f=\emph{\id}_X \colon X \to X$. 
\item $f$ contracts a single $(-1)$-curve $C$
on $X$ to a point in $Y$. 
\end{itemize}
Then we have 
\begin{align*}
\sigma_{f^{\ast}\omega} 
\in \Stab(X)_{\mathbb{R}}. 
\end{align*}
In particular, $\sigma_{f^{\ast}\omega}$ satisfies the support property. 
\end{prop}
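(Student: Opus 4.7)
The plan is to treat the two cases of $f$ separately. For $f=\id_X$, the heart $\aA_{f^{\ast}\omega}$ coincides with the Arcara--Bertram tilt $\aA_{\omega}$, and the assertion $\sigma_{\omega} \in \Stab(X)_{\mathbb{R}}$ is essentially the content of the support-property result established in~\cite{Todext} via a Bogomolov--Gieseker (BG) type inequality valid on an arbitrary smooth projective surface. So the substantive case is the second one, where $f$ contracts a single $(-1)$-curve $C \subset X$ to a point of $Y$.

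For the $(-1)$-curve case, my first step is to check that $(Z_{f^{\ast}\omega}, \aA_{f^{\ast}\omega})$ is a pre-stability condition. The positivity condition~(\ref{pro:1}) is verified by passing a non-zero $E \in \aA_{f^{\ast}\omega}$ through its $\mu_{f^{\ast}\omega}$-HN filtration in $\PPer(X/Y)$: the imaginary part $\Imm Z_{f^{\ast}\omega}(E) = \ch_1(E) \cdot f^{\ast}\omega$ is non-negative by the construction of $\aA_{f^{\ast}\omega}$ as a tilt, and when it vanishes every HN factor has slope $0$ or $\infty$. A case-by-case inspection (using $(f^{\ast}\omega)^2>0$, $f^{\ast}\omega \cdot C=0$, $C^2=-1$, together with the explicit description of sheaves supported on $C$) then shows $\Ree Z_{f^{\ast}\omega}(E) < 0$ unless $E=0$. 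The Harder--Narasimhan property for $\sigma_{f^{\ast}\omega}$ follows from the HN property of the weak slope $\mu_{f^{\ast}\omega}$ on the noetherian heart $\PPer(X/Y)$, recalled above from \cite[Lemma~3.6]{Todext}, by the standard argument that tilting a weak stability function preserves HN.

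The substance is the support property. By Bridgeland's reformulation (cf.~\cite{Brs1,K-S}) it is equivalent to exhibiting a quadratic form $Q$ on $N(X)_{\mathbb{R}}$ that is negative definite on $\ker Z_{f^{\ast}\omega}$ and satisfies $Q(E) \ge 0$ for every $\sigma_{f^{\ast}\omega}$-semistable $E$. A natural candidate mixes the BG-type form on $Y$ used in~\cite{Todext} for $\sigma_{\omega}$, pulled back by $f^{\ast}$, with a correction built from $\ch_1(E) \cdot [C]$ to account for sheaves supported on the exceptional curve (which lie in $\ker \dR f_{\ast}$). The key input is Lemma~\ref{lem:pstab}: it lets one relate $\mu_{f^{\ast}\omega}$-semistable objects in $\PPer(X/Y)$ to $\mu_{\omega}$-semistable objects on $Y$ via the adjoint pair $(\dL f^{\ast}, \dR f_{\ast})$.

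The main obstacle will be verifying that $Q(E) \ge 0$ for all $\sigma_{f^{\ast}\omega}$-semistable $E$. A $\sigma_{f^{\ast}\omega}$-semistable object need not push forward to a $\sigma_{\omega}$-semistable object on $Y$, since the contraction of $C$ can introduce destabilizing subobjects or quotients. To handle this I would analyze the canonical triangle relating $E$ to $\dL f^{\ast}\dR f_{\ast} E$ and its kernel/cokernel in $\dD_{X/Y}$, apply the BG inequality of~\cite{Todext} to $\dR f_{\ast} E$ viewed in $\aA_{\omega}$, and separately control the $\dD_{X/Y}$-component using an explicit description of the perverse simples supported on $C$ (a pair of exceptional objects, by the noncommutative resolution picture of~\cite{MVB}). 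Because this estimate is combinatorial and relies on several auxiliary computations, it is natural to defer the detailed verification to Section~\ref{sec:tech} as suggested by the plan of the paper.
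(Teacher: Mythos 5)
Your outline gets the easy parts right (the positivity check and the Harder--Narasimhan property in the $(-1)$-curve case do go through essentially as you say, via the weak stability $\mu_{f^{\ast}\omega}$ on the noetherian heart $\PPer(X/Y)$ and rationality of $\omega$), but the heart of the statement --- the support property for $\sigma_{f^{\ast}\omega}$ when $f$ contracts a $(-1)$-curve --- is not actually proven. You name a candidate quadratic form, concede that showing $Q(E)\ge 0$ for every $\sigma_{f^{\ast}\omega}$-semistable $E$ is ``the main obstacle,'' and then defer it to Section~\ref{sec:tech}. That deferral is empty: Section~\ref{sec:tech} of the paper proves Propositions~\ref{prop:lift} and \ref{prop:liftY} (continuity of the lift at irrational points) and contains no such verification. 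The inequality you need is precisely a Bogomolov--Gieseker-type inequality for $Z_{f^{\ast}\omega}$-semistable objects in the tilted perverse heart, and that is the entire content of the result; without it your argument establishes only a pre-stability condition, not membership in $\Stab(X)_{\mathbb{R}}$.

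The paper does not reprove any of this: it obtains the whole proposition by citation --- \cite{AB} for $f=\id_X$ being a stability condition, and \cite[Lemma~3.12, Proposition~3.13]{Todext} for the $(-1)$-curve case including the support property (with the $\id_X$ support property following from the same, easier, argument). The mechanism in \cite{Todext}, recalled later in the proof of Lemma~\ref{lem:prepare}, is a wall-crossing argument along the ray $\{\sigma_{s f^{\ast}\omega}\}_{s\ge 1}$ toward the large-volume limit, which yields the BG-type inequality for semistable objects directly on $X$. Your alternative plan --- push $E$ down to $Y$, apply the BG inequality to $\dR f_{\ast}E$ in $\aA_{\omega}$, and control the $\dD_{X/Y}$-component separately --- would need substantial new work even as a sketch: $\dR f_{\ast}E$ of a $\sigma_{f^{\ast}\omega}$-semistable object need not be $Z_{\omega}$-semistable (so the BG inequality on $Y$ does not apply to it off the shelf), and the adjunction triangle $\dL f^{\ast}\dR f_{\ast}E \to E \to D$ with $D\in \dD_{X/Y}$ is not an exact sequence in $\aA_{f^{\ast}\omega}$, so the Chern-character bookkeeping for your mixed form does not reduce to the two pieces in the way suggested. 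Either carry out such an estimate in full, or do what the paper does and invoke \cite[Lemma~3.12, Proposition~3.13]{Todext}.
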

\begin{proof}
If $f=\id_X$, the result
of~\cite{AB} shows that $\sigma_{f^{\ast}\omega}$
is a stability condition on $D^b \Coh(X)$. 
If $f$ contracts a $(-1)$-curve $C$ on $X$, the result 
of~\cite[Lemma~3.12]{Todext} shows that $\sigma_{f^{\ast}\omega}$ is a 
stability condition on $D^b \Coh(X)$. 
The support property of $\sigma_{f^{\ast}\omega}$ 
is proven in~\cite[Proposition~3.13]{Todext}
when $f$ contracts a $(-1)$-curve. 
When $f=\id_X$, the proof for the support property 
follows from the same (even easier)
argument of~\cite[Proposition~3.13]{Todext}.
\end{proof}
If $f=\id_X$, the stability condition 
$\sigma_{\omega}$ satisfies the following property: 
\begin{lem}\label{Ox:stable}
Let $\omega \in \mathrm{NS}(X)_{\mathbb{Q}}$ be ample 
and $f=\emph{\id}_X$. 

(i) For any $x\in X$, the object $\oO_x$ is a simple 
object in $\aA_{\omega}$. In particular, it
is $Z_{\omega}$-stable. 

(ii) For any object $E\in \aA_{\omega}$
with $\ch(E)=\ch(\oO_x)$, we have $E \cong \oO_x$
for some $x\in X$. 
\end{lem}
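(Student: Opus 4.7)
The plan is to analyse objects in the tilted heart $\aA_{\omega} = \langle \fF_{\omega}[1], \tT_{\omega}\rangle$ by passing to standard cohomology and exploiting the rank--slope constraints built into the torsion pair $(\tT_{\omega}, \fF_{\omega})$. Since $\oO_x$ is a zero-dimensional torsion sheaf, $\oO_x \in \tT_{\omega} \subset \aA_{\omega}$, and a direct computation with (\ref{def:Z}) gives $Z_{\omega}(\oO_x)=-1$, so $\oO_x$ has phase one; simplicity in $\aA_{\omega}$ then formally yields $Z_{\omega}$-stability.

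For (i), given a short exact sequence $0 \to F \to \oO_x \to G \to 0$ in $\aA_{\omega}$, the long exact sequence of standard cohomology together with $\hH^{-1}(\oO_x)=0$ forces $\hH^{-1}(F)=0$ and produces an exact sequence in $\Coh(X)$
$$0 \to \hH^{-1}(G) \to \hH^{0}(F) \to \oO_x \to \hH^{0}(G) \to 0.$$
Because $\oO_x$ is simple in $\Coh(X)$, the image of $\hH^{0}(F)\to\oO_x$ is either $0$ or all of $\oO_x$. In the first case $\hH^{0}(F)\cong\hH^{-1}(G)$ would lie in $\tT_{\omega}\cap\fF_{\omega}=0$, so $F=0$. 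In the second case, setting $K=\ker(\hH^{0}(F)\to\oO_x)\cong\hH^{-1}(G)$, I would show that $K=0$ (see below), which immediately gives $F\cong\oO_x$ and $G=0$.

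For (ii), write $K=\hH^{-1}(E)\in\fF_{\omega}$ and $T=\hH^{0}(E)\in\tT_{\omega}$. The equality $\ch(T)-\ch(K)=\ch(\oO_x)$ gives $\ch_{0}(T)=\ch_{0}(K)$ and $\ch_{1}(T)=\ch_{1}(K)$, while $\ch_{2}(T)-\ch_{2}(K)=1$. The same rank--slope input forces $\ch_{0}(K)=0$, hence $K=0$ by torsion-freeness, so $E=T$ is a coherent sheaf with $\ch_{1}(T)=0$ and $\ch_{2}(T)=1$, i.e.\ a zero-dimensional sheaf of length one, which is necessarily $\oO_x$ for a unique $x\in X$.

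The main obstacle, common to both parts, is the following rank--slope fact: if a sheaf $T\in\tT_{\omega}$ fits into $0\to K\to T\to Q\to 0$ in $\Coh(X)$ with $K\in\fF_{\omega}$ of strictly positive rank and $Q$ supported in dimension $\le 0$, then a contradiction arises. Indeed, $K$ is torsion free, so $T_{\mathrm{tor}}\hookrightarrow Q$ is zero-dimensional, and the torsion-free quotient $T^{\mathrm{tf}}$ contains $K$ with zero-dimensional cokernel; therefore $\ch_{0}(T^{\mathrm{tf}})=\ch_{0}(K)$ and $\ch_{1}(T^{\mathrm{tf}})=\ch_{1}(K)$, giving $\mu_{\omega}(T^{\mathrm{tf}})=\mu_{\omega}(K)\le 0$. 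This contradicts the fact that every torsion-free HN factor of an object in $\tT_{\omega}$ has $\mu_{\omega}>0$. The remaining steps are routine bookkeeping with the torsion pair and the long exact sequence of standard cohomology.
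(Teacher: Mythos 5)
Your proof is correct. It is worth noting how it sits relative to the paper: the paper gives no argument of its own here, simply citing \cite[Lemma~6.3]{Brs2} for (i) and declaring (ii) obvious from the construction of $\aA_{\omega}$, so what you have done is write out, for an arbitrary projective surface, essentially the standard tilting argument that lies behind that citation: take standard cohomology of a short exact sequence in $\aA_{\omega}=\langle \fF_{\omega}[1],\tT_{\omega}\rangle$, use $\tT_{\omega}\cap\fF_{\omega}=0$, and rule out a nonzero $\fF_{\omega}$-piece by the slope constraints (objects of $\fF_{\omega}$ are torsion-free with $c_1\cdot\omega\le 0$, while the torsion-free part of an object of $\tT_{\omega}$ has all Harder--Narasimhan slopes $>0$). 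This buys self-containedness and confirms the paper's implicit claim that the K3 argument works verbatim beyond the K3/abelian case. One small imprecision: in (ii) there is no exact sequence $0\to K\to T\to Q\to 0$ with $Q$ zero-dimensional, so the rank--slope fact as you literally formulated it does not apply; what you need (and what the same input gives in one line) is the numerical variant: if $K=\hH^{-1}(E)\neq 0$ then $K$ is torsion-free of positive rank with $\ch_1(K)\cdot\omega\le 0$, so $\ch_0(T)=\ch_0(K)>0$ would force $\ch_1(T)\cdot\omega>0$, contradicting $\ch_1(T)=\ch_1(K)$; hence $K=0$. With that rephrasing the bookkeeping closes, and the final identification of a zero-dimensional sheaf of length one with some $\oO_x$ (using that an effective divisor class with $D\cdot\omega=0$ vanishes, so $\ch_1(T)=0$ kills the divisorial support) is exactly the ``obvious'' step the paper alludes to.
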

\begin{proof}
The result of 
(i) is essentially proved in~\cite[Lemma~6.3]{Brs2}. 
The result of (ii) is obvious from the construction of 
$\aA_{\omega}$. 
\end{proof}

The ample cone $A(X)$ is defined to be 
\begin{align*}
A(X) &\cneq \{ \omega \in \mathrm{NS}(X)_{\mathbb{R}} : \omega \mbox{ is }
\mathbb{R} \mbox{-ample} \}. 
\end{align*}
We define its 
partial compactification
$\overline{A}(X) \subset \mathrm{NS}(X)_{\mathbb{R}}$
to be 
\begin{align*}
\overline{A}(X) \cneq 
\bigcup_{f \colon X \to Y}
f^{\ast} A(Y). 
\end{align*}
In the above union, $f$ is either $\id_X \colon X \to X$ 
or contracts a single $(-1)$-curve on $X$
to a point in $Y$. 
Below, we sometimes write an element 
of $\overline{A}(X)$ as $\omega$ for a nef
divisor $\omega$ on $X$, omitting 
$f^{\ast}$ in the notation. 
We have the embedding 
\begin{align}\label{emb}
\overline{A}(X) \subset \mathrm{NS}(X)_{\mathbb{R}}. 
\end{align}
The following proposition shows 
the existence of stability conditions 
for irrational $\omega$: 
\begin{prop}\label{prop:lift}
The embedding (\ref{emb}) lifts to a continuous map  
\begin{align}\label{map:lift}
\sigma \colon \overline{A}(X) \to \Stab(X)_{\mathbb{R}}
\end{align}
which takes any rational point
 $\omega \in \overline{A}(X)$
to the stability condition
$\sigma_{\omega}$ in Proposition~\ref{prop:stab}. 
\end{prop}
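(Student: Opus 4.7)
The plan is to lift the embedding by combining Proposition~\ref{prop:stab}, which provides $\sigma_{\omega}$ at every rational class in $\overline{A}(X)_{\mathbb{Q}}$, with Bridgeland's deformation theorem (Theorem~\ref{thm:Brmain}). Since $\Stab(X)_{\mathbb{R}}$ is defined as a pullback of the local homeomorphism $\Pi$, the map $\Pi_{\mathbb{R}}\colon \Stab(X)_{\mathbb{R}} \to \mathrm{NS}(X)_{\mathbb{R}}$ is again a local homeomorphism. The strategy is the standard cover-and-glue procedure: around each rational anchor $\omega_{0}\in\overline{A}(X)_{\mathbb{Q}}$, use the continuous local inverse of $\Pi_{\mathbb{R}}$ sending $\omega_{0}$ to $\sigma_{\omega_{0}}$, and then show that the resulting local sections agree on overlaps.

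Concretely, I would proceed as follows. Fix $\omega_{0}\in\overline{A}(X)_{\mathbb{Q}}$ and choose an open ball $B_{\omega_{0}}\subset \mathrm{NS}(X)_{\mathbb{R}}$ such that $\Pi_{\mathbb{R}}$ restricts to a homeomorphism from an open neighborhood $V_{\omega_{0}}\subset\Stab(X)_{\mathbb{R}}$ of $\sigma_{\omega_{0}}$ onto $B_{\omega_{0}}$, with continuous inverse $s_{\omega_{0}}$. Set $\sigma(\omega)\cneq s_{\omega_{0}}(\omega)$ for $\omega\in B_{\omega_{0}}\cap\overline{A}(X)$. Independence of the choice of $\omega_{0}$ reduces, by density of rational points and uniqueness of the local inverse, to showing that for two sufficiently close $\omega_{0},\omega_{1}\in \overline{A}(X)_{\mathbb{Q}}$ one has $\sigma_{\omega_{1}}\in V_{\omega_{0}}$, whence automatically $s_{\omega_{0}}(\omega_{1})=\sigma_{\omega_{1}}$. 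Once this is verified, the local inverses patch to a well-defined continuous map $\sigma\colon \overline{A}(X)\to\Stab(X)_{\mathbb{R}}$, which by construction takes each rational $\omega$ to $\sigma_{\omega}$.

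The main obstacle is the compatibility $\sigma_{\omega_{1}}\to\sigma_{\omega_{0}}$ as $\omega_{1}\to\omega_{0}$ inside $\overline{A}(X)_{\mathbb{Q}}$. When $\omega_{0}$ and $\omega_{1}$ lie in a common open cone $f^{\ast}A(Y)$, the hearts $\aA_{f^{\ast}\omega_{0}}$ and $\aA_{f^{\ast}\omega_{1}}$ are built by the same tilting recipe from $\PPer(X/Y)$, and closeness in Bridgeland's topology should follow from continuity of the torsion pair $(\tT_{f^{\ast}\omega},\fF_{f^{\ast}\omega})$ in $\omega$, i.e.\ from the Arcara--Bertram argument adapted to $\PPer(X/Y)$. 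The harder case is when $\omega_{0}$ lies on the boundary face $f^{\ast}A(Y)\subset \partial A(X)$ for a $(-1)$-curve contraction $f$ while $\omega_{1}$ approaches from the interior $A(X)$: then the hearts $\aA_{\omega_{1}}\subset D^{b}\Coh(X)$, built from $\Coh(X)$, must limit to $\aA_{f^{\ast}\omega_{0}}$, built from $\PPer(X/Y)$. This is a genuine wall-crossing comparison, and is one of the core technical points presumably deferred to Section~\ref{sec:tech}; the key inputs will be the explicit relation between $\Coh(X)$ and $\PPer(X/Y)$ across the wall $\omega\cdot C=0$ together with the support property of Proposition~\ref{prop:stab}, which allows one to bound the Harder--Narasimhan filtrations uniformly as $\omega_{1}\to\omega_{0}$.
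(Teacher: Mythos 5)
Your framework (pull back the local homeomorphism, take local sections over rational anchors, patch) matches the skeleton of the paper's Step 1, but the two points you flag as "should follow" are precisely the mathematical content, and the mechanisms you sketch for them are not the ones that work. For the key compatibility that $s_{\omega_0}(\omega_1)=\sigma_{\omega_1}$ for nearby rational $\omega_0,\omega_1$, appealing to "continuity of the torsion pair $(\tT_{f^{\ast}\omega},\fF_{f^{\ast}\omega})$ in $\omega$" is not an argument: the hearts $\aA_{\omega}$ for different $\omega$ are different tilts (slope-semistable objects jump as $\omega$ varies), and no such continuity statement is available or used. The paper never compares torsion pairs directly; instead it uses that, after shrinking the neighborhood, every skyscraper $\oO_x$ is stable of phase one for the deformed stability condition $\widetilde{\sigma}_{\omega'}$ (\cite[Proposition~3.14]{Todext}), and then invokes Lemma~\ref{lem:Brlem}: a stability condition with central charge $Z_{\omega'}$ in which all $\oO_x$ are stable must have heart $\aA_{\omega'}$, so $\widetilde{\sigma}_{\omega'}=\sigma_{\omega'}$. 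This single device also disposes of your "harder case" of a rational boundary point versus interior approach (the paper reduces to interior $\omega'$ using that $\Stab(X)_{\mathbb{R}}$ is Hausdorff); your proposed substitute, a "wall-crossing comparison" of $\Coh(X)$ against $\PPer(X/Y)$ with uniform HN bounds, is essentially a restatement of what has to be proved rather than a proof.

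The second gap is that your patching only defines $\sigma$ on $\bigcup_{\omega_0}B_{\omega_0}\cap\overline{A}(X)$ over rational anchors, and since the radii provided by Theorem~\ref{thm:Brmain} may shrink, this union need not contain the irrational points of $\overline{A}(X)$. This is why the paper has three further steps. At an irrational $\omega$ the pair $(Z_{\omega},\aA_{\omega})$ is not even known a priori to satisfy the Harder--Narasimhan property (see the Remark following Proposition~\ref{prop:lift}), so $\sigma_{\omega}$ can only be produced as a limit $\lim_j\sigma_{\omega_j}$ along rational $\omega_j\to\omega$; for that limit to exist via \cite[Theorem~7.1]{Brs1} one needs the support constants $K_j$ to be bounded \emph{uniformly} in $j$, which the paper extracts from the explicit estimates of \cite{Todext} --- the support property at each fixed rational point, which is all you invoke, does not give this. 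Finally, well-definedness of the limit (independence of the approximating sequence, equivalently agreement of sections not defined near $\omega$) is a genuine issue, settled in the paper by a wall-and-chamber argument showing $\oO_x$ remains $\sigma_{\omega}$-stable at irrational $\omega$, after which the Lemma~\ref{lem:Brlem} identification can be re-run; your proposal does not address this step at all.
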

\begin{proof}
The proof will be given in Subsection~\ref{pf:lift}. 
\end{proof}

\begin{rmk}
For $\omega \in A(X)$, it is possible to construct 
the heart $\aA_{\omega}$ similarly to (\ref{A:tilt}), 
even if $\omega$ is irrational. 
However the Harder-Narasimhan property 
for the pair $(Z_{\omega}, \aA_{\omega})$ is not 
obvious.
In the proof of Proposition~\ref{prop:lift},
we will also show that 
any object $\oO_x$ for $x\in X$ is 
$\sigma(\omega)$-stable, 
even when $\omega$ is irrational.  
Combined with 
Lemma~\ref{lem:Brlem},  
it shows that 
the pair $(Z_{\omega}, \aA_{\omega})$ indeed
satisfies the Harder-Narashiman property
for an irrational $\omega$.  
(Also see the argument of~\cite[Section~11]{Brs2}.)
\end{rmk}
We set $U(X) \subset \Stab(X)_{\mathbb{R}}$ to be
\begin{align*}
U(X) \cneq \sigma(A(X))
\end{align*}
Note that $U(X)$ is a connected open subset of $\Stab(X)_{\mathbb{R}}$,
which is homeomorphic to $A(X)$
under the forgetting map 
$\Stab(X)_{\mathbb{R}} \to \mathrm{NS}(X)_{\mathbb{R}}$. 
It satisfies the property of Theorem~\ref{thm:intro}
for $f=\id_X \colon X \to X$. 
Our purpose in the following sections is 
to construct a similar open subset 
associated to any birational morphism $f\colon X \to Y$. 

\section{Construction of t-structures}\label{sec:construction}
In what follows, 
$X$ and $Y$ are smooth projective surfaces and
\begin{align*}
f \colon X \to Y
\end{align*}
a birational morphism. 
In this section, we construct some t-structures
on $\cC_{X/Y}$ and $D^b \Coh(X)$,
which will be 
needed in the proof of Theorem~\ref{thm:intro}.

\subsection{t-structure on $\cC_{X/Y}$}
Let $\cC_{X/Y}$ be the triangulated
subcategory of $D^b \Coh(X)$
defined in Subsection~\ref{subsec:perverse}. 
The purpose here is to
 construct 
the heart of a bounded t-structure 
\begin{align*}
\cC_{X/Y}^{0} \subset \cC_{X/Y}
\end{align*}
satisfying the following conditions: 
there are objects $S_1, \cdots, S_N \in \cC_{X/Y}^{0}$
satisfying
\begin{align}\label{desire:S}
\cC_{X/Y}^{0} = \langle S_1, \cdots, S_N \rangle, \ 
\dR f_{\ast}S_i[1] \in \Coh_0(Y). 
\end{align} 
Here $\Coh_0(Y)$ is the abelian category of zero dimensional 
coherent sheaves on $Y$, and $N$ is the 
number of irreducible components of $\Ex(f)$, 
the exceptional locus of $f$. 
We construct $\cC_{X/Y}^{0}$ by induction on 
the number of irreducible components $N$. This 
approach is convenient to 
describe generators of $\cC_{X/Y}^{0}$, and the relationship 
under blow-downs. 

When $N=0$, then $\cC_{X/Y}=\{0\}$ and the
heart $\cC_{X/Y}^{0}$ is taken to be 
the trivial one. 
Suppose that $N>0$, and 
let us consider the finite set of points, 
\begin{align*}
f(\Ex(f))=\{p_1, \cdots, p_l \}.
\end{align*}  
Since any object $E \in \cC_{X/Y}$
is supported on $\sqcup_i f^{-1}(p_i)$, 
there is a decomposition
\begin{align*}
\cC_{X/Y}=\bigoplus_{i=1}^{l} \cC_{X/Y_i}
\end{align*}
where $X \stackrel{f_i}{\to} Y_i \to Y$
is a factorization of $f$ so that 
$\Ex(f_i)=f^{-1}(p_i)$. 
We construct t-structures on 
each $\cC_{X/Y_i}$, and take their
direct sum to construct the t-structure on $\cC_{X/Y}$. 
Since $f_i(\Ex(f_i))$ is a point, 
we may 
assume that $l=1$. 

Let 
\begin{align*}
h \colon Y' \to Y
\end{align*} be the
 blowing up at $f(\Ex(f))=\{p\}$, and $C \subset Y'$ the 
exceptional locus of $h$. 
The birational morphism 
$f \colon X \to Y$ factors through $h \colon Y' \to Y$, 
\begin{align*}
f \colon X \stackrel{g}{\to} Y' \stackrel{h}{\to}Y. 
\end{align*}
The functor $\dR f_{!}$ also factors as
\begin{align*}
\dR f_{!} \colon D^b \Coh(X) \stackrel{\dR g_{!}}{\to} D^b \Coh(Y')
\stackrel{\dR h_{!}}{\to} D^b \Coh(Y). 
\end{align*}
Therefore we have the sequence of exact functors
\begin{align}\label{C:ext}
\cC_{X/Y'} \to 
\cC_{X/Y} \stackrel{\dR g_{!}}{\to} \cC_{Y'/Y}
\end{align}
where the left functor is the natural inclusion. 
The functors $\dL g^{\ast}, g^{!}$ satisfy
\begin{align*}
\dR g_{!} \dL g^{\ast}E \cong E, \ 
\dR g_{!} g^{!} E \cong E
\end{align*}
for any $E \in D^b \Coh(Y')$. 
This implies that
$\dL g^{\ast}$ and $g^{!}$ 
induce the right and the left 
adjoint functors of 
\begin{align*}
\dR g_{!} \colon \cC_{X/Y} \to \cC_{Y'/Y}
\end{align*}
respectively. 
From this fact, it is straightforward to check that 
the sequence (\ref{C:ext})
is an exact triple as in Subsection~\ref{subsec:glue}. 

By~\cite[Lemma~3.1]{Br1},  
the standard t-structure on $D^b \Coh(Y')$
induces a bounded t-structure on $\cC_{Y'/Y}$. 
The heart is described by (cf.~\cite[Proposition~3.5.8]{MVB})
\begin{align}\label{heart:C}
\cC_{Y'/Y} \cap \Coh(Y')
 = \langle \oO_{C} \rangle. 
\end{align}
On the other hand, by the inductive assumption, 
we have the heart $\cC_{X/Y'}^{0} \subset \cC_{X/Y'}$
written as 
\begin{align}\label{heart:C2}
\cC_{X/Y'}^{0}=
\langle S'_1, \cdots, S'_{N-1} \rangle
\end{align}
for some objects $S'_j \in \cC_{X/Y'}^{0}$ with 
$1\le j \le N-1$ satisfying 
$\dR g_{\ast} S'_j[1] \in \Coh_0(Y')$.
By gluing the t-structures with hearts (\ref{heart:C}), (\ref{heart:C2})
via the exact triple (\ref{C:ext}),  
we obtain the heart
\begin{align*}
\widetilde{\cC}_{X/Y}^{0} \subset \cC_{X/Y}. 
\end{align*} 
Let us set $\widehat{C} \cneq g^{\ast} C$, where 
$g^{\ast}$ means the total pull-back. 
We naturally regard $\widehat{C}$ as a subscheme of 
$X$. 
We have the following lemma: 
\begin{lem}\label{lem:C=CS}
We have 
\begin{align}\label{C=CS}
\widetilde{\cC}_{X/Y}^{0} =
\langle \cC_{X/Y'}^{0}, \oO_{\widehat{C}}
  \rangle
\end{align}
such that $(\cC_{X/Y'}^{0}, 
\langle \oO_{\widehat{C}} \rangle)$
is a torsion pair on $\widetilde{\cC}_{X/Y}^{0}$. 
\end{lem}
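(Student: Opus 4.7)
The plan is to verify both halves of the claim, the identification (\ref{C=CS}) and the torsion pair property, by exploiting the recollement structure coming from the exact triple (\ref{C:ext}).

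First I would place $\oO_{\widehat{C}}$ inside $\widetilde{\cC}_{X/Y}^{0}$. The key identification is $\oO_{\widehat{C}} \cong \dL g^{\ast}\oO_{C}$, which follows from $\widehat{C} = g^{\ast}C$ as Cartier divisors: the ideal sheaf sequence $0 \to \oO_{Y'}(-C) \to \oO_{Y'} \to \oO_{C} \to 0$ pulls back exactly along $g$. The identity $\dR g_{!}\dL g^{\ast} \cong \id$ from the excerpt then gives $\dR g_{!}\oO_{\widehat{C}} \cong \oO_{C}$, so $\dR f_{!}\oO_{\widehat{C}} \cong \dR h_{!}\oO_{C}$. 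The last object vanishes: the adjunction formula on the $(-1)$-curve $C \subset Y'$ yields $\omega_{Y'}|_{C} \cong \oO_{C}(-1)$, so $\dR h_{\ast}(\oO_{C} \otimes \omega_{Y'}) \cong \dR h_{\ast}\oO_{C}(-1) = 0$. Thus $\oO_{\widehat{C}} \in \cC_{X/Y}$. The remaining gluing axioms for membership in $\widetilde{\cC}_{X/Y}^{0}$, namely the Hom-vanishings against $\cC_{X/Y'}^{<0}$ and $\cC_{X/Y'}^{>0}$, reduce via the two adjunctions $g^{!} \dashv \dR g_{!}$ and $\dR g_{!} \dashv \dL g^{\ast}$ to the defining identity $\dR g_{!}|_{\cC_{X/Y'}} = 0$.

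Next I would check the Hom-vanishing defining the torsion pair: for any $F \in \cC_{X/Y'}$,
$$\Hom(F, \oO_{\widehat{C}}) \cong \Hom(F, \dL g^{\ast}\oO_{C}) \cong \Hom(\dR g_{!}F, \oO_{C}) = 0,$$
and d\'evissage upgrades this to $\Hom(\cC_{X/Y'}^{0}, \langle \oO_{\widehat{C}}\rangle) = 0$. For the filtration of an arbitrary $E \in \widetilde{\cC}_{X/Y}^{0}$, I would invoke the canonical BBD attaching triangle
$$i_{\ast}i^{!}E \to E \to \dL g^{\ast}\dR g_{!}E \to i_{\ast}i^{!}E[1]$$
where $i^{!}$ denotes the right adjoint of the inclusion $\cC_{X/Y'} \hookrightarrow \cC_{X/Y}$. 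Since $\dR g_{!}E$ lies in $\langle \oO_{C}\rangle$, its image $\dL g^{\ast}\dR g_{!}E$ is an iterated extension of $\oO_{\widehat{C}}$, lying in $\langle \oO_{\widehat{C}}\rangle \subset \widetilde{\cC}_{X/Y}^{0}$. The characterization of the glued heart from Subsection~\ref{subsec:glue} then forces $i^{!}E \in \cC_{X/Y'}^{0}$, so the triangle becomes a short exact sequence in the heart, yielding simultaneously (\ref{C=CS}) and the torsion pair axiom.

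The main technical obstacle is this last point: one must verify that the recollement triangle is actually a short exact sequence in the glued heart rather than a triangle with cohomology spread across several degrees. This amounts to tracking through the BBD gluing axioms from Subsection~\ref{subsec:glue} to confirm that $i^{!}E$ lands in $\cC_{X/Y'}^{0}$ and not merely in $\cC_{X/Y'}^{\ge 0}$; once this bookkeeping is in place, the remainder of the argument is formal.
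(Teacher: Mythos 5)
Your overall architecture matches the paper's proof: the attaching triangle $F \to E \to \dL g^{\ast}\dR g_{!}E$ is exactly the device used there, and your computation of $\Hom(\cC_{X/Y'}, \oO_{\widehat{C}})=0$ via $\dR g_{!}\dashv \dL g^{\ast}$ is the paper's argument verbatim. However, there are two places where you defer or misattribute the real content, and both come down to the same missing ingredient.

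First, the claim that both Hom-vanishings needed for $\oO_{\widehat{C}} \in \widetilde{\cC}_{X/Y}^{0}$ ``reduce via the two adjunctions to $\dR g_{!}|_{\cC_{X/Y'}}=0$'' is false for the second one. Since $\oO_{\widehat{C}}=\dL g^{\ast}\oO_{C}$ lies in the image of the \emph{right} adjoint of $\dR g_{!}$, maps \emph{into} it are governed by $\dR g_{!}$, but maps \emph{out} of it are computed by the other adjunction: $\Hom(\oO_{\widehat{C}}, G) \cong \Hom(\oO_{C}, \dR g_{\ast}G)$, and $\dR g_{\ast}$ does not annihilate $\cC_{X/Y'}$ (it is $\dR g_{!}$ that does); nor is $\oO_{\widehat{C}}$ of the form $g^{!}(\ast)$, since that would introduce a twist by $\omega_{X/Y'}$. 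Concretely, if $\Gamma\subset X$ is a $(-1)$-curve contracted by $g$ with $\Gamma\subset\widehat{C}$, then $\oO_{\Gamma}\in\cC_{X/Y'}$ and $\Hom(\oO_{\widehat{C}},\oO_{\Gamma})\neq 0$. The actual reason $\Hom(\oO_{\widehat{C}},\cC_{X/Y'}^{>0})=0$ is the inductive property of the construction, namely $\dR g_{\ast}\cC_{X/Y'}^{0}\subset \Coh_0(Y')[-1]$ (i.e. $\dR g_{\ast}S'_j[1]\in\Coh_0(Y')$), which places $\dR g_{\ast}\cC_{X/Y'}^{>0}$ in $D^{>1}(\Coh(Y'))$ and kills the Hom for degree reasons. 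You never invoke this property anywhere in the proposal.

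Second, the step you explicitly set aside as ``bookkeeping'' --- that $i^{!}E$ lands in $\cC_{X/Y'}^{0}$ --- is not formal and is not supplied by the gluing axioms. Those axioms give $i^{!}E\in\cC_{X/Y'}^{\ge 0}$ for free, and the long exact sequence of the attaching triangle confines its cohomology to degrees $0$ and $1$; but killing $\hH^{1}_{\cC_{X/Y'}^{0}}(i^{!}E)$ requires exactly the same input as above. That $\hH^{1}$ is a quotient in the heart of $\dL g^{\ast}\dR g_{!}E$, so one needs $\Hom(\dL g^{\ast}A, A')=0$ for $A\in\Coh(Y')$ and $A'\in\cC_{X/Y'}^{0}$, which again follows only from $\dR g_{\ast}\cC_{X/Y'}^{0}\subset\Coh_0(Y')[-1]$ via adjunction. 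So the one genuinely nontrivial point of the lemma is left unproved, and the tool needed to settle it is precisely the one your adjunction argument incorrectly claims is unnecessary.
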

\begin{proof}
We first check that the RHS is contained in the LHS. 
By the definition of gluing, it 
is obvious that $\cC_{X/Y'}^{0}$ is contained in the LHS. 
Also since $\oO_{\widehat{C}}=\dL g^{\ast} \oO_C$,
we have $\dR g_{!} \oO_{\widehat{C}}=\oO_C \in \Coh(Y')$. 
We have 
\begin{align}\notag
\Hom(\cC_{X/Y'}, \oO_{\widehat{C}})
&\cong \Hom(\dR g_{!} \cC_{X/Y'}, \oO_C) \\
\label{vanish:HdRc}
&\cong 0
\end{align}
since $\dR g_{!} \cC_{X/Y'}=0$, and 
\begin{align}\notag
\Hom(\oO_{\widehat{C}}, \cC_{X/Y'}^{> 0}) &\cong 
\Hom(\oO_{C}, \dR g_{\ast} \cC_{X/Y'}^{> 0}) \\
\notag
&\cong 0
\end{align}
since $\dR g_{\ast} \cC_{X/Y'}^{> 0} \in D^{>1}(\Coh(Y))$
by the inductive assumption. 
These imply that $\oO_{\widehat{C}}$ is contained in the LHS. 

Conversely, let us take an object $E\in \widetilde{\cC}_{X/Y}^{0}$. 
By the adjointness, we have the distinguished triangle
\begin{align*}
F \to E \to \dL g^{\ast} \dR g_{!} E. 
\end{align*}
Note that we have 
\begin{align*}
\dL g^{\ast} \dR g_{!}E \in 
\langle \oO_{\widehat{C}}
\rangle, \ F \in \cC_{X/Y'}.
\end{align*}
 Moreover, 
since $\cC_{X/Y'}^{0} \subset 
\widetilde{\cC}_{X/Y}^{0}$, 
 we have  
\begin{align*}
\hH_{\cC_{X/Y'}^{0}}^{i}(F) 
\cong \hH_{\widetilde{\cC}_{X/Y}^{0}}^{i}(F)
\end{align*}
for all $i$. 
Therefore we have the exact sequence in 
$\widetilde{\cC}_{X/Y}^{0}$
\begin{align*}
0 \to \hH_{\cC_{X/Y'}^{0}}^{0}(F) \to
E \to \dL g^{\ast} \dR g_{!} E
\to \hH_{\cC_{X/Y'}^{0}}^{1}(F) \to 0
\end{align*}
and $\hH_{\cC_{X/Y'}^{0}}^{i}(F)=0$
for $i\neq 0, 1$. 
On the other hand, 
for any $A \in \Coh(Y')$ and $A' \in \cC_{X/Y'}^{0}$, we
have 
\begin{align*}
\Hom(\dL g^{\ast}A, A') &\cong 
\Hom(A, \dR g_{\ast}A') \\
&\cong 0 
\end{align*}
since $\dR g_{\ast}A' \in \Coh_{0}(Y')[-1]$. 
Therefore we have $\hH_{\cC_{X/Y'}^{0}}^{1}(F) \cong 0$, 
$F \in \cC_{X/Y'}^{0}$ and an exact 
sequence in $\widetilde{\cC}_{X/Y}^{0}$
\begin{align}\label{FEdR}
0 \to F \to E \to \dL g^{\ast} \dR g_{!}E \to 0. 
\end{align}
This implies that $E$ is contained in the RHS of (\ref{C=CS}). 
Together with the vanishing (\ref{vanish:HdRc}), 
the exact sequence (\ref{FEdR}) implies that 
$(\cC_{X/Y'}^{0}, \langle \oO_{\widehat{C}} \rangle)$
is a torsion pair on $\widetilde{\cC}_{X/Y}^{0}$. 
\end{proof}
We also have the following lemma: 
\begin{lem}\label{lem:additional}
There is a torsion pair on $\widetilde{\cC}_{X/Y}^{0}$ of the form 
\begin{align}\label{tpair}
( \langle \oO_{\widehat{C}} \rangle, 
\oO_{\widehat{C}}^{\cC, \perp}  ),
\end{align}
where $\oO_{\widehat{C}}^{\cC, \perp}$ is the right 
orthogonal 
complement of $\oO_{\widehat{C}}$ in 
$\widetilde{\cC}_{X/Y}^{0}$. 
\footnote{We put ``$\cC$'' in the notation of 
the right orthogonal complement, in order
to distinguish it with a similar 
orthogonal complement in other abelian category 
in Subsection~\ref{subsec:relation}.}
\end{lem}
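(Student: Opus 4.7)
The plan is to produce the desired torsion pair by constructing, for each $E \in \widetilde{\cC}_{X/Y}^{0}$, a short exact sequence $0 \to T(E) \to E \to E/T(E) \to 0$ with $T(E) \in \langle \oO_{\widehat{C}}\rangle$ and $E/T(E) \in \oO_{\widehat{C}}^{\cC, \perp}$. The vanishing $\Hom(\langle\oO_{\widehat{C}}\rangle, \oO_{\widehat{C}}^{\cC,\perp})=0$ being immediate from the definition of the right orthogonal, the substantive content lies in showing that such a decomposition always exists, and in particular that $\langle \oO_{\widehat{C}}\rangle$ is closed under quotients inside $\widetilde{\cC}_{X/Y}^{0}$.

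First I would verify that $\widetilde{\cC}_{X/Y}^{0}$ is a noetherian abelian category. This follows by induction from the noetherianness of the two glued hearts $\cC_{X/Y'}^{0}$ (inductive hypothesis, since $Y'$ has one fewer exceptional component than $Y$) and $\cC_{Y'/Y}^{0} \cap \Coh(Y') = \langle \oO_C\rangle$ (trivially noetherian), combined with the standard fact that a heart glued via a recollement-style exact triple of noetherian hearts is again noetherian. Next, I would establish that $\langle \oO_{\widehat{C}}\rangle$ is closed under quotients in $\widetilde{\cC}_{X/Y}^{0}$. The key identification here is that $\dL g^{\ast}$ induces a fully faithful exact embedding $\langle \oO_C \rangle \hookrightarrow \widetilde{\cC}_{X/Y}^{0}$ whose essential image is precisely $\langle \oO_{\widehat{C}}\rangle$: fully faithfulness comes from $\dR g_{!} \dL g^{\ast} \cong \id$; exactness from the description of the glued heart; the identification of the image uses Lemma~\ref{lem:C=CS}, namely that $\langle \oO_{\widehat{C}}\rangle$ coincides with the torsion-free class of the torsion pair $(\cC_{X/Y'}^{0}, \langle \oO_{\widehat{C}}\rangle)$. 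Given any surjection $T \twoheadrightarrow Q$ in $\widetilde{\cC}_{X/Y}^{0}$ with $T \in \langle \oO_{\widehat{C}}\rangle$, applying $\dR g_!$ yields a surjection $\dR g_! T \twoheadrightarrow \dR g_! Q$ in $\langle \oO_C\rangle$, whence $Q \cong \dL g^{\ast} \dR g_! Q \in \langle \oO_{\widehat{C}}\rangle$.

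With these preliminaries, I would define $T(E)$ as the sum of all subobjects of $E$ in $\widetilde{\cC}_{X/Y}^{0}$ that lie in $\langle \oO_{\widehat{C}}\rangle$. By noetherianness this sum is achieved by a finite subsum, and by closure of $\langle \oO_{\widehat{C}}\rangle$ under quotients and extensions the resulting subobject itself belongs to $\langle \oO_{\widehat{C}}\rangle$. A nonzero morphism $\oO_{\widehat{C}} \to E/T(E)$ would pull back to produce a subobject of $E$ strictly containing $T(E)$ and still lying in $\langle \oO_{\widehat{C}}\rangle$, contradicting maximality of $T(E)$; hence $E/T(E) \in \oO_{\widehat{C}}^{\cC,\perp}$, completing the construction. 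The main obstacle I anticipate is the closure of $\langle \oO_{\widehat{C}}\rangle$ under quotients: unlike in Lemma~\ref{lem:C=CS}, where $\langle \oO_{\widehat{C}}\rangle$ appears merely as the torsion-free part of an existing torsion pair, here one must exploit the exactness and essential image of $\dL g^{\ast}$ on the glued hearts, which rests on a careful unpacking of the recollement structure from Subsection~\ref{subsec:glue}.
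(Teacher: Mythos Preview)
Your overall strategy matches the paper's: reduce to showing that $\langle \oO_{\widehat{C}}\rangle$ is closed under quotients in the noetherian abelian category $\widetilde{\cC}_{X/Y}^{0}$, then invoke the standard fact that a quotient-closed extension-closed subcategory of a noetherian abelian category is the torsion class of a torsion pair. The difficulty is entirely in the closure-under-quotients step, and here your argument has a genuine gap.

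You write: ``applying $\dR g_!$ yields a surjection $\dR g_! T \twoheadrightarrow \dR g_! Q$ in $\langle \oO_C\rangle$, whence $Q \cong \dL g^{\ast} \dR g_! Q$.'' But this ``whence'' is exactly the point at issue. The unit map $Q \to \dL g^{\ast}\dR g_! Q$ is the torsion-free quotient of $Q$ with respect to the torsion pair $(\cC_{X/Y'}^{0}, \langle \oO_{\widehat{C}}\rangle)$ of Lemma~\ref{lem:C=CS}; its kernel lies in $\cC_{X/Y'}^{0}$ and there is no a~priori reason for it to vanish. Knowing that $\dR g_! Q \in \langle \oO_C\rangle$ tells you nothing about this kernel, since $\dR g_!$ annihilates all of $\cC_{X/Y'}^{0}$. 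In general the essential image of a fully faithful exact functor between abelian categories is \emph{not} closed under quotients, and the recollement structure alone does not force it here.

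The paper fills this gap by a direct computation that uses two further ingredients you have not invoked: first, that $\langle \oO_{\widehat{C}}\rangle$ is closed under \emph{subobjects} (being the torsion-free class in Lemma~\ref{lem:C=CS}); second, that $\Ext^1(\oO_{\widehat{C}},\oO_{\widehat{C}})=0$ and $\End(\oO_{\widehat{C}})=\mathbb{C}$, so that every object of $\langle \oO_{\widehat{C}}\rangle$ is a finite direct sum $\oO_{\widehat{C}}^{\oplus m}$. Given a surjection $\oO_{\widehat{C}}^{\oplus m}\twoheadrightarrow G$ with kernel $F$, closure under subobjects gives $F\cong \oO_{\widehat{C}}^{\oplus l}$; then the inclusion $\oO_{\widehat{C}}^{\oplus l}\hookrightarrow \oO_{\widehat{C}}^{\oplus m}$ is a full-rank linear map and hence splits, forcing $G\cong \oO_{\widehat{C}}^{\oplus(m-l)}$. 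Your argument can be repaired along the same lines: once you know the kernel $K$ lies in $\langle\oO_{\widehat{C}}\rangle$, the monomorphism $K\hookrightarrow T$ is $\dL g^{\ast}$ of a monomorphism in the semisimple category $\langle \oO_C\rangle$, hence splits, and the cokernel $Q$ is $\dL g^{\ast}$ of the cokernel there. But as written, the key step is missing.
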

\begin{proof}
By the inductive assumption, 
the abelian category $\cC_{X/Y'}^{0}$
is the extension closure of some finite 
number of objects. 
Hence by Lemma~\ref{lem:C=CS},
it follows that 
the abelian category 
$\widetilde{\cC}_{X/Y}^{0}$ is
also 
the extension closure of some finite number of objects. 
In particular it is a noetherian abelian category. 
Hence it is enough to check that $\langle \oO_{\widehat{C}} \rangle$
is closed under quotients (cf.~\cite[Lemma~2.15 (i)]{Tcurve2}).
To prove the latter statement, note that 
$\langle \oO_{\widehat{C}} \rangle$
is closed under subobjects since it is a free part of some torsion 
pair by Lemma~\ref{lem:C=CS}.
Also since the self extension of $\oO_{\widehat{C}}$ vanishes, 
any object in $\langle \oO_{\widehat{C}} \rangle$ is a 
direct sum of $\oO_{\widehat{C}}$. 
Let us take an exact sequence in $\widetilde{\cC}_{X/Y}^{0}$,  
\begin{align*}
0 \to F \to \oO_{\widehat{C}}^{\oplus m} \to G \to 0. 
\end{align*}
By the argument above, $F$ is isomorphic to 
$\oO_{\widehat{C}}^{\oplus l}$ for some $l$. 
Then the object $G$ must be isomorphic to $\oO_{\widehat{C}}^{\oplus m-l}$, 
proving that $\langle \oO_{\widehat{C}} \rangle$ is closed under 
quotients. 
\end{proof}

By taking the tilting
with respect to the torsion pair (\ref{tpair}), 
we define the heart of a bounded t-structure 
$\cC_{X/Y}^{0}$
on $\cC_{X/Y}$ to be, 
\begin{align}\label{def:CXY}
\cC_{X/Y}^{0} \cneq \langle \oO_{\widehat{C}}^{\cC, \perp}, 
\oO_{\widehat{C}}[-1] \rangle. 
\end{align}
\begin{lem}\label{lem:CXY}
We have 
\begin{align}\label{CSN}
\cC_{X/Y}^{0}= \langle S_1, \cdots, S_{N-1}, S_{N} \rangle 
\end{align}
where $S_{N}=\oO_{\widehat{C}}[-1]$ and 
$S_i$ for $1\le i \le N-1$ is given by the universal 
extension in $\widetilde{\cC}_{X/Y}^{0}$
\begin{align}\label{univ:S}
0 \to S'_i \to S_i \to 
\oO_{\widehat{C}} \otimes \Ext^1(\oO_{\widehat{C}}, S_i') \to 0.
\end{align}
\end{lem}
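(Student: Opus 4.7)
The plan is to prove the two inclusions separately, combining the tilting construction (\ref{def:CXY}) with the decomposition of $\widetilde{\cC}_{X/Y}^{0}$ from Lemma~\ref{lem:C=CS} and the inductive hypothesis $\cC_{X/Y'}^{0} = \langle S'_1, \ldots, S'_{N-1} \rangle$ satisfying (\ref{desire:S}).

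For $(\supseteq)$: the containment $S_N = \oO_{\widehat{C}}[-1] \in \cC_{X/Y}^{0}$ is immediate from (\ref{def:CXY}). For $1 \le i \le N-1$, I show $S_i \in \oO_{\widehat{C}}^{\cC,\perp}$. By adjunction,
\begin{align*}
\Hom(\oO_{\widehat{C}}, S'_i) \cong \Hom(\oO_C, \dR g_\ast S'_i) = 0,
\end{align*}
since $\dR g_\ast S'_i[1] \in \Coh_0(Y')$ by (\ref{desire:S}) places $\dR g_\ast S'_i$ in cohomological degree one. Applying $\Hom(\oO_{\widehat{C}}, -)$ to the defining extension (\ref{univ:S}), the connecting homomorphism $\Hom(\oO_{\widehat{C}}, \oO_{\widehat{C}} \otimes \Ext^1(\oO_{\widehat{C}}, S'_i)) \to \Ext^1(\oO_{\widehat{C}}, S'_i)$ is an isomorphism by the very definition of the universal extension class, so $\Hom(\oO_{\widehat{C}}, S_i) = 0$ as desired.

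For $(\subseteq)$: take $E \in \cC_{X/Y}^{0}$. The torsion pair on the tilted heart produces a short exact sequence $0 \to A \to E \to B \to 0$ in $\cC_{X/Y}^{0}$ with $A \in \oO_{\widehat{C}}^{\cC,\perp}$ and $B \in \langle \oO_{\widehat{C}}[-1] \rangle = \langle S_N \rangle$, reducing the problem to showing $A \in \langle S_1, \ldots, S_N \rangle$. Lemma~\ref{lem:C=CS} then gives $0 \to T \to A \to F \to 0$ in $\widetilde{\cC}_{X/Y}^{0}$ with $T \in \cC_{X/Y'}^{0}$ and $F \cong \oO_{\widehat{C}}^{\oplus m}$. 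The condition $\Hom(\oO_{\widehat{C}}, A) = 0$ forces the class of this extension, viewed as a linear map $\phi \colon \mathbb{C}^m \to V \cneq \Ext^1(\oO_{\widehat{C}}, T)$, to be injective. Forming the universal extension $\tilde T$ of $T$ (defined as in (\ref{univ:S}) with $T$ in place of $S'_i$), the pullback along $\phi$ exhibits $A$ as a subobject of $\tilde T$, yielding a short exact sequence $0 \to A \to \tilde T \to \oO_{\widehat{C}}^{\oplus (n-m)} \to 0$ in $\widetilde{\cC}_{X/Y}^{0}$, where $n = \dim V$. The induced rotation $S_N^{\oplus (n-m)} \to A \to \tilde T$ places $A$ in $\langle S_N, \tilde T \rangle$, reducing the claim to $\tilde T \in \langle S_1, \ldots, S_{N-1} \rangle$.

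The remaining assertion is established by induction on the length of $T$ in $\cC_{X/Y'}^{0}$. The base case $T = S'_i$ yields $\tilde T = S_i$ by definition. For the inductive step, one takes a short exact sequence $0 \to T' \to T \to T'' \to 0$ in $\cC_{X/Y'}^{0}$ with $T'$, $T''$ of smaller length; the vanishing $\Hom(\oO_{\widehat{C}}, T'') = 0$ (established as in Step 1) produces an inclusion $\Ext^1(\oO_{\widehat{C}}, T') \hookrightarrow V$, and a diagram chase relating the universal extensions $\tilde T'$, $\tilde T$, $\tilde T''$ exhibits $\tilde T$ as built from $\tilde T'$ and $\tilde T''$ through extensions in $\widetilde{\cC}_{X/Y}^{0}$ (with any residual $\oO_{\widehat{C}}$-summands reabsorbed by another application of the reduction step above). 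The main technical obstacle is this inductive step: a potential non-vanishing of $\Ext^2(\oO_{\widehat{C}}, T')$ can prevent $\tilde T$ from fitting directly as an extension of $\tilde T''$ by $\tilde T'$, so one needs a careful iterative construction to absorb the error terms.
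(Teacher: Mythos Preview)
Your approach is essentially the same as the paper's: both directions of the inclusion are argued in the same way, and the heart of the matter---reducing to the universal extension $\tilde T$ (the paper writes $\widehat{F}$) of an arbitrary $T\in\cC_{X/Y'}^{0}$ and then inducting on a filtration of $T$ by the $S'_i$---is identical in outline.

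Two points where the paper's execution is sharper than your sketch. First, the reduction target should be $\langle S_1,\dots,S_N\rangle$, not $\langle S_1,\dots,S_{N-1}\rangle$: copies of $S_N=\oO_{\widehat C}[-1]$ genuinely appear in the inductive step, and there is no need to ``reabsorb'' them separately. Second, the inductive step you leave vague is made completely explicit in the paper and does \emph{not} require any control of $\Ext^2(\oO_{\widehat C},T')$. Given $0\to F_1\to F\to F_2\to 0$ in $\cC_{X/Y'}^{0}$, one applies $\Hom(\oO_{\widehat C},-)$ to get
\[
0 \to \Ext^1(\oO_{\widehat C},F_1) \to \Ext^1(\oO_{\widehat C},F) \xrightarrow{\psi} \Ext^1(\oO_{\widehat C},F_2),
\]
and then introduces the \emph{partial} universal extension $\overline{F}_2$ of $F_2$ by $\oO_{\widehat C}\otimes\Imm\psi$. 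One obtains directly the exact sequence $0\to\widehat{F}_1\to\widehat{F}\to\overline{F}_2\to 0$ in $\cC_{X/Y}^{0}$, together with $0\to \oO_{\widehat C}\otimes\Cok(\psi)[-1]\to\overline{F}_2\to\widehat{F}_2\to 0$ in $\cC_{X/Y}^{0}$. Thus $\overline{F}_2\in\langle S_N,\widehat{F}_2\rangle$ and $\widehat{F}\in\langle\widehat{F}_1,S_N,\widehat{F}_2\rangle$, completing the induction cleanly. The possible non-surjectivity of $\psi$ (which is what your $\Ext^2$ concern amounts to) is absorbed by the $\Cok(\psi)$ term, with no iteration needed.
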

\begin{proof}
We first note the vanishing 
\begin{align}\label{vanish:OOR}
\Hom(\oO_{\widehat{C}}, \cC_{X/Y'}^{0})=0
\end{align}
since $\dR g_{\ast} \cC_{X/Y'}^{0} \subset \Coh_0(Y')[-1]$. 
By the vanishing (\ref{vanish:OOR}), we have 
$\Hom(\oO_{\widehat{C}}, S_i')=0$
for $1\le i\le N-1$. 
Combined with the fact that (\ref{univ:S}) is the universal 
extension, it follows that 
$\Hom(\oO_{\widehat{C}}, S_i)=0$, 
i.e. $S_i \in \oO_{\widehat{C}}^{\cC, \perp}$
for $1\le i\le N-1$. Therefore 
the RHS of (\ref{CSN}) 
is contained in the LHS of (\ref{CSN}).

Conversely, let us take an object $E \in \oO_{\widehat{C}}^{\cC, \perp}$. 
By Lemma~\ref{lem:C=CS}, there is an exact sequence in 
$\widetilde{\cC}_{X/Y}^{0}$
\begin{align*}
0 \to F \to E \to \oO_{\widehat{C}} \otimes V \to 0
\end{align*}
for some $F\in \cC_{X/Y'}^{0}$ and some finite
dimensional $\mathbb{C}$-vector space $V$. 
Since $\Hom(\oO_{\widehat{C}}, E)=0$, we have the injection 
\begin{align*}
V \hookrightarrow \Ext^1(\oO_{\widehat{C}}, F). 
\end{align*}
Let $W$ be the cokernel of the above injection. 
There is an exact sequence in $\cC_{X/Y}^{0}$
\begin{align*}
0 \to E \to \widehat{F} \to \oO_{\widehat{C}} \otimes W \to 0
\end{align*}
where  $\widehat{F}$ is 
the universal extension in $\widetilde{\cC}_{X/Y}^{0}$ 
\begin{align}\label{univ}
0 \to F \to \widehat{F}
 \to \oO_{\widehat{C}} \otimes \Ext^1(\oO_{\widehat{C}}, F)
 \to 0.
\end{align}
It is enough to show that $\widehat{F}$
 is contained in the RHS of (\ref{CSN}). 
Since $\cC_{X/Y'}^{0}$ is the extension closure of $S_1', \cdots, 
S_{N-1}'$, 
this follows from the following claim: 
for an exact sequence in $\cC_{X/Y'}^{0}$
\begin{align}\label{M12}
0 \to F_1 \to F \to F_2 \to 0
\end{align}
suppose that 
their universal extensions
$\widehat{F}_i$ in $\widetilde{\cC}_{X/Y}^{0}$
\begin{align*}
0 \to F_i \to \widehat{F}_i \to \oO_{\widehat{C}}\otimes
\Ext^1(\oO_{\widehat{C}}, F_i) \to 0
\end{align*}
are contained in the RHS of (\ref{CSN}). 
Then $\widehat{F}$ is contained in the RHS of (\ref{CSN}). 
To prove this claim, first note that 
$\Hom(\oO_{\widehat{C}}, F_2)=0$
by the vanishing (\ref{vanish:OOR}).
Therefore applying $\Hom(\oO_{\widehat{C}}, \ast)$ to the 
sequence (\ref{M12}),
we obtain the exact sequence
\begin{align*}
0 \to \Ext^1(\oO_{\widehat{C}}, F_1) \to \Ext^1(\oO_{\widehat{C}}, F) 
\stackrel{\psi}{\to} \Ext^1(\oO_{\widehat{C}}, F_2). 
\end{align*} 
It follows  that there is an exact sequence in $\cC_{X/Y}^{0}$
\begin{align}\label{M1b}
0 \to \widehat{F}_1 \to \widehat{F} \to \overline{F}_2 \to 0
\end{align}
where $\overline{F}_2$ fits into the exact sequence in 
$\widetilde{\cC}_{X/Y}^{0}$
\begin{align*}
0 \to F_2 \to \overline{F}_2 \to
 \oO_{\widehat{C}} \otimes \Imm \psi \to 0.
\end{align*}
We have the exact sequence in $\cC_{X/Y}^{0}$
\begin{align*}
0 \to \oO_{\widehat{C}}\otimes \Cok(\psi)[-1] 
 \to \overline{F}_2 \to \widehat{F}_2 \to 0.
\end{align*}
Since $\widehat{F}_2$ is contained in the 
RHS of (\ref{CSN}), the object
$\overline{F}_2$ is also 
contained in the RHS of (\ref{CSN}). 
Combined with that $\widehat{F}_1$
is contained in the 
RHS of (\ref{CSN}), the exact sequence (\ref{M1b})
implies that the object 
$\widehat{F}$ is also contained in the
RHS of (\ref{CSN}). 
\end{proof}
Moreover we have the following lemma: 
\begin{lem}
For the objects $S_i$ in Lemma~\ref{lem:CXY}, we have
\begin{align}\label{RS0}
\dR f_{\ast} S_i[1] \in \Coh_0(Y), \ 1 \le i \le N. 
\end{align}
\end{lem}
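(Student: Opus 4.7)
The plan is to proceed by induction on $N$, following the inductive construction of $\cC_{X/Y}^{0}$. For $N=0$ there is nothing to prove; for $N\ge 1$, I reduce as in the construction to the case $f(\Ex(f))=\{p\}$, so that $f=h\circ g$ where $h\colon Y'\to Y$ is the blow-up at $p$ (with exceptional curve $C$) and $g\colon X\to Y'$ has $N-1$ exceptional components. The case $i=N$ is a direct computation: since $\widehat{C}=g^{\ast}C$ is Cartier, we have $\oO_{\widehat{C}}\cong \dL g^{\ast}\oO_C$, so by the projection formula and $\dR g_{\ast}\oO_X\cong \oO_{Y'}$ I obtain $\dR g_{\ast}\oO_{\widehat{C}}\cong \oO_C$. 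Hence $\dR f_{\ast}S_N[1]=\dR f_{\ast}\oO_{\widehat{C}}=\dR h_{\ast}\oO_C\cong \oO_p\in \Coh_0(Y)$.

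For $1\le i\le N-1$, I would apply $\dR f_{\ast}$ to the universal extension
\begin{align*}
0\to S'_i\to S_i\to \oO_{\widehat{C}}\otimes V_i\to 0, \qquad V_i\cneq \Ext^1(\oO_{\widehat{C}},S'_i).
\end{align*}
By the inductive hypothesis, $T'_i\cneq \dR g_{\ast}S'_i[1]\in \Coh_0(Y')$ is supported on $g(\Ex(g))\subset h^{-1}(p)=C$; hence after pushing down by $h$ one has $\dR f_{\ast}S'_i[1]=h_{\ast}T'_i\in \Coh_0(Y)$, a zero-dimensional sheaf at $p$. Together with $\dR f_{\ast}(\oO_{\widehat{C}}\otimes V_i)=\oO_p\otimes V_i$, the resulting triangle and the long exact sequence of cohomology for the standard $t$-structure on $D^b\Coh(Y)$ reduce the proof to showing that the connecting map
\begin{align*}
\delta\colon \oO_p\otimes V_i\to h_{\ast}T'_i
\end{align*}
is injective: in that case $\dR f_{\ast}S_i[1]$ is concentrated in degree $0$ and equals $\mathrm{coker}(\delta)$, which is automatically zero-dimensional.

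The key step, and what I expect to be the main obstacle, is the identification and injectivity of $\delta$. By adjunction
\begin{align*}
V_i=\Hom_X(\oO_{\widehat{C}},S'_i[1])\cong \Hom_{Y'}(\oO_C,\dR g_{\ast}S'_i[1])=\Hom_{Y'}(\oO_C,T'_i),
\end{align*}
and under the resulting isomorphism $\Hom_{Y'}(\oO_C\otimes V_i,T'_i)\cong V_i^{\vee}\otimes V_i=\End(V_i)$, the pre-pushforward connecting map $\oO_C\otimes V_i\to T'_i$ corresponds to $\id_{V_i}$, precisely because the extension defining $S_i$ is universal; explicitly this is the evaluation morphism $\alpha\otimes \phi\mapsto \phi(\alpha)$. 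Applying $h_{\ast}$, $\delta$ is determined by the induced $\mathbb{C}$-linear map on stalks at $p$,
\begin{align*}
V_i=\Hom_{Y'}(\oO_C,T'_i)\to H^0(C,T'_i), \qquad \phi\mapsto \phi(1),
\end{align*}
which is injective because at each stalk $\oO_{C,q}$ is generated by $1$ over $\oO_{Y',q}$, so any $\oO_{Y'}$-linear $\phi\colon \oO_C\to T'_i$ is determined by $\phi(1)$. This completes the induction.
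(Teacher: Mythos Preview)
Your proof is correct and follows essentially the same approach as the paper: both argue by induction, push forward the universal extension, and reduce to showing that a certain map has trivial $H^0$-kernel. The only organizational difference is that the paper first applies $\dR g_\ast$ to obtain the two-term complex $(\oO_C^{\oplus m_i}\stackrel{\phi}{\to} Q_i)$ on $Y'$ and then checks $H^0(C,\Ker\phi)=0$ by lifting a hypothetical section back to $X$ and contradicting universality directly, whereas you push all the way to $Y$ via $\dR f_\ast$, use universality to identify the connecting map as the evaluation map, and then deduce injectivity from the elementary fact that $\oO_C$ is a cyclic $\oO_{Y'}$-module; these two arguments are equivalent under the adjunction $\Hom_X(\oO_{\widehat{C}},S'_i[1])\cong\Hom_{Y'}(\oO_C,T'_i)$.
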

\begin{proof}
The claim for $i=N$ is obvious. 
Suppose that $1\le i\le N-1$. 
Applying $\dR g_{\ast}$ to the sequence (\ref{univ:S}), 
we obtain the distinguished triangle
\begin{align*}
\dR g_{\ast}S'_i \to \dR g_{\ast}S_i \to \oO_{C}^{\oplus m_i}
\end{align*}
where $m_i = \dim \Ext^1(\oO_{\widehat{C}}, S_i')$. 
Since $\dR g_{\ast}S'_i \cong Q_i[-1]$ for
some zero dimensional sheaf $Q_i$ on $Y'$, 
the object $\dR g_{\ast}S_i$ is isomorphic to the two 
term complex
\begin{align*}
(\oO_C^{\oplus m_i} \stackrel{\phi}{\to} Q_i)
\end{align*}
with $\oO_C^{\oplus m_i}$ located in degree zero.
It is enough to check $h_{\ast} \Ker(\phi)=0$, 
which is equivalent to $H^0(C, \Ker(\phi))=0$. 
If $H^0(C, \Ker(\phi))$ is non-zero, then there is
a non-zero section
$s\in H^0(C, \oO_C^{\oplus m_i})$
satisfying $\phi \circ s =0$. 
By adjunction, there is non-zero
$\widehat{s} \in H^0(\widehat{C}, \oO_{\widehat{C}}^{\oplus m_i})$
such that the composition 
\begin{align*}
\oO_{\widehat{C}} \stackrel{\widehat{s}}{\to}
\oO_{\widehat{C}}^{\oplus m_i} \to S'_i[1]
\end{align*}
is zero. Here the right morphism
is induced by
the extension (\ref{univ:S}).
This contradicts the fact that (\ref{univ:S}) is the universal 
extension. Hence $h_{\ast}\Ker(\phi)=0$, 
and the condition (\ref{RS0}) holds.  
\end{proof} 
By the above lemmas, the heart $\cC_{X/Y}^{0} \subset \cC_{X/Y}$
satisfies the desired property (\ref{desire:S}). 
As a summary, we have obtained the following proposition: 
\begin{prop}\label{prop:Ctst}
Let $X$ be a smooth projective surface. 
Then for each smooth projective surface $Y$ 
and a birational morphism $f\colon X \to Y$, we 
can associated the heart of a bounded t-structure 
$\cC_{X/Y}^{0} \subset \cC_{X/Y}$ satisfying the 
following conditions: 
\begin{itemize}
\item For any $F\in \cC_{X/Y}^{0}$, the object
$\dR f_{\ast}F[1]$ is a zero dimensional sheaf on $Y$. 
\item $\cC_{X/Y}^{0}$ is the extension closure of a
finite number of objects in $\cC_{X/Y}^{0}$. 
\item 
Suppose that $f(\Ex(f))$
is a point $p\in Y$, and 
take the factorization
\begin{align*}
f \colon X\stackrel{g}{\to} Y' \stackrel{h}{\to} Y
\end{align*}
where $h$ is a 
blow-up at $p$ which contracts 
a $(-1)$-curve 
$C$ on $Y'$, and 
$\cC_{X/Y'}^{0}$ is the extension closure of objects
$S_1', \cdots, S_{N-1}'$. 
Then $\cC_{X/Y}^{0}$ is the extension closure
of objects $S_1, \cdots, S_{N-1}, S_N\cneq \oO_{\widehat{C}}[-1]$, 
where $\widehat{C}=g^{\ast}C$ and $S_i$ is the 
cone of the universal morphism 
\begin{align*}
S_i \to \oO_{\widehat{C}} \otimes \Ext^1(\oO_{\widehat{C}}, S_i') \to S_i'[1]. 
\end{align*}
\end{itemize}
\end{prop}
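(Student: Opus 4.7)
The plan is to collect the three assertions as an inductive consequence of the gluing construction and the lemmas already established in this section. The induction is on the number $N$ of irreducible components of $\Ex(f)$. First I would reduce to the case where $f(\Ex(f))$ is a single point $p \in Y$: since every object of $\cC_{X/Y}$ is supported on $\sqcup_i f^{-1}(p_i)$, we have the orthogonal decomposition $\cC_{X/Y}=\bigoplus_i \cC_{X/Y_i}$, and constructing $\cC_{X/Y}^{0}$ as the direct sum of the hearts $\cC_{X/Y_i}^{0}$ automatically inherits the three listed properties. So I can assume $f(\Ex(f))=\{p\}$.

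The base case $N=0$ is the trivial heart, and there is nothing to check. For the inductive step, factor $f=h\circ g$ with $h \colon Y' \to Y$ the blow-up at $p$ and $\widehat{C}=g^{\ast}C$. By the inductive hypothesis we have $\cC_{X/Y'}^{0}=\langle S_1', \ldots, S_{N-1}'\rangle$ with $\dR g_{\ast}S_j'[1]\in \Coh_0(Y')$ and $\cC_{X/Y'}^{0}$ noetherian (extension closure of finitely many objects). The glued heart $\widetilde{\cC}_{X/Y}^{0}$ comes from the exact triple (\ref{C:ext}) together with (\ref{heart:C}) and (\ref{heart:C2}); by Lemma~\ref{lem:C=CS} it admits the torsion pair $(\cC_{X/Y'}^{0}, \langle \oO_{\widehat{C}}\rangle)$, and by Lemma~\ref{lem:additional} it also carries the opposite-direction torsion pair $(\langle \oO_{\widehat{C}}\rangle, \oO_{\widehat{C}}^{\cC,\perp})$. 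Tilting at the latter yields $\cC_{X/Y}^{0}$ as in (\ref{def:CXY}). Lemma~\ref{lem:CXY} identifies $\cC_{X/Y}^{0}$ with $\langle S_1, \ldots, S_{N-1}, S_N\rangle$, where $S_N=\oO_{\widehat{C}}[-1]$ and $S_i$ for $i\le N-1$ is the universal extension (\ref{univ:S}); this gives the second and third bullets simultaneously.

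For the first bullet, by the extension-closure description it suffices to verify $\dR f_{\ast}S_i[1]\in \Coh_0(Y)$ for each generator $S_i$. For $i=N$ this is immediate from $\dR h_{\ast}\oO_C=\oO_p$. For $i\le N-1$, pushing (\ref{univ:S}) forward under $g$ shows $\dR g_{\ast}S_i$ is a two-term complex $(\oO_C^{\oplus m_i}\to Q_i)$ with $Q_i\in \Coh_0(Y')$; one then applies $h_{\ast}$ and checks that the kernel of this map, restricted to $C$, has no global sections. The argument I would use is the one already present in the text: a nonzero global section would contradict the universality of the extension (\ref{univ:S}) via the adjunction $g^{\ast}\dashv g_{\ast}$ between $\oO_C$ and $\oO_{\widehat{C}}$.

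The main obstacle is not any single calculation but packaging the induction cleanly: one has to verify that the tilting in Lemma~\ref{lem:additional} is legitimate at each inductive stage, which requires the noetherianity of $\widetilde{\cC}_{X/Y}^{0}$ and the closure of $\langle \oO_{\widehat{C}}\rangle$ under quotients. Both of these are built into the inductive hypothesis that $\cC_{X/Y'}^{0}$ be the extension closure of finitely many objects, so once the induction is set up as above the three assertions fall out of the cited lemmas.
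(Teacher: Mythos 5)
Your proposal is correct and follows essentially the same route as the paper: the proposition is exactly the summary of the inductive gluing construction, and you assemble it from the same ingredients (reduction to a single point via the direct sum decomposition, the exact triple (\ref{C:ext}), the torsion pairs of Lemma~\ref{lem:C=CS} and Lemma~\ref{lem:additional}, the tilt (\ref{def:CXY}), the generator description of Lemma~\ref{lem:CXY}, and the pushforward computation for the universal extensions (\ref{univ:S})). Nothing essential is missing.
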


\subsection{Generators of the heart $\cC_{X/Y}^{0}$}\label{subsec:gen}
In this subsection, we give an
explicit description of the generator of $\cC_{X/Y}^{0}$. 
The description here is not canonical, 
since it depends on a choice of a factorization of
$f$ as in (\ref{compose}) below. 
However it will be useful in constructing stability conditions.
Before giving a general description, we 
look at our resulting generators in some examples:  
\begin{exam}\label{exam:C}
(i) Suppose that $f \colon X\to Y$
contracts disjoint $(-1)$-curves
$C_1, \cdots, C_N$ on $X$. 
Then we have 
\begin{align*}
\cC_{X/Y}^{0}= \langle \oO_{C_1}[-1], \cdots, \oO_{C_N}[-1] \rangle. 
\end{align*}

(ii) Suppose that the
$f \colon X \to Y$
factors as 
\begin{align*}
X=X_1 \stackrel{g_1}{\to}
 X_2 \stackrel{g_2}{\to} X_3=Y
\end{align*}
so that each
$g_i$ contracts a $(-1)$-curve
$C_i \subset X_i$, 
and $p_1=g_1(C_1)$
satisfies $p_1 \in C_2$. 
Then we have 
\begin{align*}
\cC_{X/Y}^{0}=
\langle \oO_{\overline{C}_2}(-1), \oO_{C_1+\overline{C}_2}[-1]\rangle. 
\end{align*}
Here $\overline{C}_i \subset X$ is the strict transform. 

(iii) 
Suppose that 
$f \colon X \to Y$
factors as 
\begin{align*}
X=X_1 \stackrel{g_1}{\to}
 X_2 \stackrel{g_2}{\to} X_3 \stackrel{g_3}{\to}X_4=Y
\end{align*}
so that each
$g_i$ contracts a $(-1)$-curve
$C_i \subset X_i$,
and $p_i=g_i(C_i)$
satisfies $p_1 \notin C_2$, 
$\{g_2(p_1), p_2 \} \subset C_3$. 
Then we have
\begin{align*}
\cC_{X/Y}^{0} =\langle \oO_{C_2 + \overline{C}_3}(-p_1), 
\oO_{C_1 + \overline{C}_3}(-p_2), \oO_{C_1 + C_2 + \overline{C}_3}[-1] 
\rangle.  
\end{align*}
\end{exam}
Our strategy is 
to factorize $f$ into 
a composition of contractions of 
$(-1)$-curves, and describe the 
generator of $\cC_{X/Y}^{0}$
by the induction on the number of contractions. 
We divide $(-1)$-curves which 
appear in the contractions into two types: 
a $(-1)$-curve is of type I if it 
is essentially obtained as an exceptional curve
of a blow-up of $Y$, and otherwise it is of type II. 
For instance in Example~\ref{exam:C} (iii), 
the curve $C_3$ is of type I, and $C_1, C_2$
are of type II. 
We describe the generator of $\cC_{X/Y}^0$
according to the above types of $(-1)$-curves. 

For a birational morphism $f\colon X \to Y$ 
as in the previous subsection, 
we factorize it into a composition of 
contractions of $(-1)$-curves
\begin{align}\label{compose}
X=X_1 \stackrel{g_1}{\to} X_2 \stackrel{g_2}{\to} \cdots
\stackrel{g_{N-1}}{\to} X_{N} \stackrel{g_{N}}{\to} X_{N+1}=Y. 
\end{align}
The birational morphism 
\begin{align*}
g_i \colon X_i \to X_{i+1}
\end{align*}
contracts a single $(-1)$-curve $C_i \subset X_i$
to a point $p_i \in X_{i+1}$. 
We also set 
\begin{align*}
&g_{i, j} \cneq g_{j-1} \circ \cdots \circ g_i \colon 
X_i \to X_j \\
& f_i \cneq g_{1, i} \colon X \to X_i 
\end{align*}
and $\widehat{C}_i \cneq f_i^{\ast}C_i$. 
For $j>i$, we also write $g_{i, j}(C_i)$
as $p_i \in X_j$ by abuse of notation. 
The curves $C_i$ are classified into two types: 
\begin{itemize}
\item Type I: for any $j>i$, we have 
$p_i \notin C_j$. 
\item Type II: 
there is $j>i$ so that 
$p_i \in C_j$. 
In this case, we define $\kappa(i)>i$
to be the smallest $j>i$ satisfying $p_i \in C_j$. 
\end{itemize}
If $C_i$ is of type I, we set 
$S_i =\oO_{\widehat{C}_i}[-1]$. 
If $C_i$ is of type II, we consider the 
exact sequence of sheaves on $X_i$
\begin{align}\label{Sbar}
0 \to \overline{S}_i \to g_{i, \kappa(i)}^{\ast}\oO_{C_{\kappa(i)}}
\to \oO_{C_i} \to 0
\end{align}
and set $S_i= \dL f_i^{\ast}\overline{S}_i (=f_i^{\ast} \overline{S}_i)$. 
Here (\ref{Sbar}) is obtained 
by restricting $g_{i, \kappa(i)}^{\ast}\oO_{C_{\kappa(i)}}$
to $C_i$, and taking its kernel. 
The sheaf $\overline{S}_i$ is written as
\begin{align*}
\overline{S}_i=
 \oO_{\overline{g_{i, \kappa(i)}^{\ast}C_{\kappa(i)} -C_i}}(-p_i)
\end{align*}
for $p_i \in C_{\kappa(i)}$. 
\begin{prop}\label{prop:gen}
In the above notation, we have
\begin{align*}
\cC_{X/Y}^{0}= \langle S_1, \cdots, S_N \rangle. 
\end{align*}
\end{prop}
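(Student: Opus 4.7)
The argument proceeds by induction on the number $N$ of contractions in \eqref{compose}. For $N \le 1$ the statement follows at once from Proposition~\ref{prop:Ctst}: the sole curve $C_1$ (if any) is of type I and $\widehat{C}_1 = C_1$, so $\cC_{X/Y}^{0} = \langle \oO_{C_1}[-1]\rangle$ as asserted.

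For the inductive step, first reduce to the case $f(\Ex(f)) = \{p\}$ by applying the proposition separately to each summand of $\cC_{X/Y} = \bigoplus_j \cC_{X/Y_j}$. Under this assumption, the last map $g_N \colon X_N \to Y$ of \eqref{compose} is the blow-down at $p$, so $X_N$ is the blow-up $Y' = \Bl_p Y$ of Proposition~\ref{prop:Ctst} and $C_N$ is its exceptional curve $C$. Combining that proposition with the induction hypothesis applied to $\cC_{X/X_N}^{0} = \langle S_1', \dots, S_{N-1}' \rangle$ (that is, Proposition~\ref{prop:gen} for the factorization $X_1 \to \cdots \to X_N$ of $f_N$) yields $\cC_{X/Y}^{0} = \langle S_1, \dots, S_N \rangle$, where $S_N = \oO_{\widehat{C}_N}[-1]$ already agrees with the asserted formula since $C_N$ is of type I for $f$, and each $S_i$ with $i<N$ arises from the universal extension $0 \to S_i' \to S_i \to \oO_{\widehat{C}_N} \otimes \Ext^1(\oO_{\widehat{C}_N}, S_i') \to 0$.

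To match each such $S_i$ with Proposition~\ref{prop:gen}, I compute $\Ext^1(\oO_{\widehat{C}_N}, S_i') \cong \Hom(\oO_{C_N}, \dR f_{N\ast}S_i'[1])$ by adjunction. Under $f(\Ex(f)) = \{p\}$, the image of $p_i$ in $X_N$ (via the birational identifications) necessarily lies on $C_N$ for every $i<N$, so \emph{$C_i$ of type I for $f$} cannot occur and only two sub-cases remain. In case \emph{(b)}, where $C_i$ is already of type II for $f_N$ with $\kappa^{f_N}(i) = \kappa^f(i) =: \kappa \le N-1$, pushing the defining sequence $0 \to \overline{S}_i^{f_N} \to \oO_{g_{i,\kappa}^{\ast}C_\kappa} \to \oO_{C_i} \to 0$ on $X_i$ via $\dR g_{i,N\ast}$ produces a triangle whose two outer terms coincide as a single skyscraper on $X_N$ (the two image points agree because $p_i \in C_\kappa$) and are linked by the isomorphism induced from the restriction map on $X_i$; hence $\dR f_{N\ast}S_i' = 0$, the Ext vanishes, and $S_i = S_i'$ coincides with the asserted formula since $\overline{S}_i^f = \overline{S}_i^{f_N}$. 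In case \emph{(c)}, where $C_i$ is of type I for $f_N$ but $p_i \in C_N$ (so $\kappa^f(i) = N$ and $S_i' = \oO_{\widehat{C}_i}[-1]$), a direct computation gives $\dR f_{N\ast}\oO_{\widehat{C}_i}$ as a skyscraper at a point of $C_N$, whence $\Ext^1 \cong \mathbb{C}$; the universal extension realises $S_i$ as the mapping cone of the shifted restriction $\oO_{\widehat{C}_N}[-1] \to \oO_{\widehat{C}_i}[-1]$, namely the sheaf $\ker(\oO_{\widehat{C}_N} \twoheadrightarrow \oO_{\widehat{C}_i})$, which by exactness of $f_i^{\ast}$ along this sequence equals $f_i^{\ast}\overline{S}_i$ with $\overline{S}_i = \ker(g_{i,N}^{\ast}\oO_{C_N} \twoheadrightarrow \oO_{C_i})$, exactly as predicted.

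The main obstacle is the case-\emph{(b)} verification that the induced map between the two skyscraper pushforwards is in fact nonzero (and thus an isomorphism). This requires tracking the restriction map $\oO_{g_{i,\kappa}^{\ast}C_\kappa} \twoheadrightarrow \oO_{C_i}$ through all intermediate contractions $g_{i+1}, \dots, g_{N-1}$ and confirming that no spurious higher direct images appear along the way; this bookkeeping is simplified by computing step by step via the projection formula ($\dR g_{j\ast}\oO_{X_j} = \oO_{X_{j+1}}$) together with the vanishing of $\dR\Gamma(\mathbb{P}^1, \oO(-1))$ for the relevant line bundles on the contracted rational curves.
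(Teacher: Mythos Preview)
Your proof is correct and follows the same inductive strategy as the paper: apply Proposition~\ref{prop:Ctst} with $Y' = X_N$, use the induction hypothesis for $f_N$, and verify case by case that the universal extensions reproduce the explicit $S_i$. The paper does not first reduce to $f(\Ex(f)) = \{p\}$ but instead keeps a third case (``$C_i$ of type I for both $f_N$ and $f$''), which it dispatches in one line since then $p_i \notin C_N$ forces the Ext to vanish.

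The one substantive difference is in your case (b). Rather than pushing the defining sequence all the way to $X_N$ and tracking the induced map between skyscrapers---which you rightly flag as the delicate point---the paper pushes only to $X_{\kappa(i)}$, where the triangle reads
\[
\dR g_{i,\kappa(i)\ast}\overline{S}_i \;\longrightarrow\; \oO_{C_{\kappa(i)}} \;\longrightarrow\; \oO_{p_i}
\]
with the right arrow the obvious restriction to $p_i \in C_{\kappa(i)}$; this identifies $\dR g_{i,\kappa(i)\ast}\overline{S}_i \cong \oO_{C_{\kappa(i)}}(-1)$, and then $\dR g_{\kappa(i),N\ast}\oO_{C_{\kappa(i)}}(-1) = 0$ is immediate from $\dR\Gamma(\mathbb{P}^1,\oO(-1)) = 0$. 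This cleanly sidesteps your ``main obstacle'' and avoids any step-by-step bookkeeping through the intermediate contractions.
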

\begin{proof}
We show the proposition by the induction on $N$. 
Suppose that the claim 
holds for $f_N \colon X \to X_N$. 
Then we have 
\begin{align*}
\cC_{X/X_N}^{0}=\langle S'_1, \cdots, S'_{N-1} \rangle
\end{align*}
where $S'_i$ are the objects defined similarly to 
$S_i$, applied for the composition
\begin{align*}
X=X_1 \stackrel{g_1}{\to} X_2 \stackrel{g_2}{\to} \cdots
\stackrel{g_{N-1}}{\to} X_{N}.
\end{align*} 
Noting
Proposition~\ref{prop:Ctst} and 
$S_N=\oO_{\widehat{C}_{N}}[-1]$, 
it is enough to show that there 
is a distinguished triangle 
for each $1\le i\le N-1$,
\begin{align}\label{dist:S}
S_i \to \oO_{\widehat{C}_N} \otimes 
\Ext^1(\oO_{\widehat{C}_N}, S'_i) \to S'_i[1]. 
\end{align}
For $1\le i\le N-1$, we have 
the following three cases: 
\begin{case}
$C_i$ is of type I for both of $X\to X_N$
and $X\to Y$. 
\end{case}
In this case, we have 
$S'_i=S_i=\oO_{\widehat{C}_i}[-1]$.
Also we have 
\begin{align*}
\Ext^1(\oO_{\widehat{C}_N}, S'_i)
&= \Hom(\oO_{\widehat{C}_N}, \oO_{\widehat{C}_i}) \\
&= \Hom_{X_N}(\oO_{C_N}, \oO_{p_i}) \\
&\cong 0
\end{align*}
since $p_i \notin C_N$. 
Therefore we have the distinguished triangle (\ref{dist:S}). 
\begin{case}
$C_i$ is of type I for $X\to X_N$ and type II
for $X\to Y$. 
\end{case}
In this case, we have 
$S'_i=\oO_{\widehat{C}_i}[-1]$, 
$\kappa(i)=N$ and $S_i=\dL f_{i}^{\ast} \overline{S}_i$.  
We have 
\begin{align*}
\Ext^1(\oO_{\widehat{C}_N}, S_i') &\cong
\Hom_{X_N}(\oO_{C_N}, \oO_{p_i}) \\
&\cong \mathbb{C}
\end{align*}
since $p_i \in C_N$. 
By pulling back the exact sequence (\ref{Sbar}) 
to $X$ via $f_i$, we have the distinguished triangle (\ref{dist:S}). 
\begin{case}
 $C_i$ is of type II for both of $X\to X_N$
and $X\to Y$. 
\end{case}
In this case, $1\le \kappa(i) \le N-1$
and $S'_i=S_i =\dL f_i^{\ast} \overline{S}_i$. 
We have
\begin{align}\notag
\Ext^1(\oO_{\widehat{C}_N}, S_i') &\cong
\Ext^1_{X_i}(\dL g_{i, N}^{\ast}\oO_{C_N}, \overline{S}_i) \\
\label{vanish}
&\cong \Ext^1_{X_N}(\oO_{C_N}, \dR g_{i, N \ast} \overline{S}_i). 
\end{align}
Applying $\dR g_{i, \kappa(i) \ast}$ to the sequence (\ref{Sbar}), 
we obtain the distinguished triangle
\begin{align*}
\dR g_{i, \kappa(i) \ast}\overline{S}_i \to 
\oO_{C_{\kappa(i)}} \to \oO_{p_i}
\end{align*}
such that the right morphism is non-trivial 
since $p_i \in C_{\kappa(i)}$. Therefore 
we have $\dR g_{i, \kappa(i) \ast} 
\overline{S}_i \cong \oO_{C_{\kappa(i)}}(-1)$
and 
\begin{align*}
\dR g_{i, N \ast}\overline{S}_i &\cong 
\dR g_{ \kappa(i), N \ast}\oO_{C_{\kappa(i)}}(-1) \\
&\cong 0. 
\end{align*}
Therefore (\ref{vanish}) vanishes and 
we have the distinguished triangle (\ref{dist:S}). 
\end{proof}

\begin{rmk}
By the construction of $S_i$, we 
obviously obtain the generators of $\cC_{X/Y}^0$
 in Example~\ref{exam:C}. 
\end{rmk}

\subsection{t-structures on $D^b \Coh(X)$}
Let $f\colon X \to Y$ be a birational morphism 
as in the previous subsections. 
Let
\begin{align*}
\aA_Y \subset D^b \Coh(Y)
\end{align*}
be the heart of a bounded t-structure such that 
$\oO_y \in \aA_Y$ for any $y\in Y$. 
We construct the heart of a t-structure on $D^b \Coh(X)$
by gluing $\aA_Y$ and $\cC_{X/Y}^{0}$ constructed in the 
previous subsections. 

Let us consider the following sequence of 
exact functors
\begin{align*}
\cC_{X/Y} \to D^b \Coh(X) \stackrel{\dR f_{!}}{\to}
D^b \Coh(Y)
\end{align*}
where the left functor is the natural inclusion. 
It is straightforward to check 
that the above sequence is
an exact triple as in Subsection~\ref{subsec:glue}. 
By gluing $\aA_Y$ and $\cC_{X/Y}^{0}$, we obtain
the heart
\begin{align*}
\aA_X \subset D^b \Coh(X). 
\end{align*}
The heart $\aA_X$ is described as follows:
\begin{lem}\label{lem:ACA}
We have
\begin{align}\label{ACA}
\aA_X =\langle \cC_{X/Y}^{0}, \dL f^{\ast} \aA_{Y}\rangle
\end{align}
and $(\cC_{X/Y}^{0}, \dL f^{\ast} \aA_Y)$ is 
a torsion pair on $\aA_X$. 
\end{lem}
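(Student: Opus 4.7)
The plan is to invoke the gluing construction of Subsection~\ref{subsec:glue} applied to the exact triple $\cC_{X/Y}\hookrightarrow D^b\Coh(X)\xrightarrow{\dR f_!}D^b\Coh(Y)$, which characterises the heart $\aA_X$ as the set of $E\in D^b\Coh(X)$ such that $\dR f_!E\in\aA_Y$ and $\Hom(\cC_{X/Y}^{<0},E)=0=\Hom(E,\cC_{X/Y}^{>0})$. From this description, proving the lemma splits into three pieces: (i) both $\cC_{X/Y}^0$ and $\dL f^*\aA_Y$ lie in $\aA_X$; (ii) $\Hom(\cC_{X/Y}^0,\dL f^*\aA_Y)=0$; and (iii) every $E\in\aA_X$ fits in a short exact sequence in $\aA_X$ with sub in $\cC_{X/Y}^0$ and quotient in $\dL f^*\aA_Y$.

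For step (i), the inclusion $\cC_{X/Y}^0\subset\aA_X$ is immediate since $\dR f_!$ vanishes on $\cC_{X/Y}$ and the required Hom conditions are precisely those defining the heart $\cC_{X/Y}^0$ inside $\cC_{X/Y}$. For $\dL f^*M$ with $M\in\aA_Y$, the relation $\dR f_!\dL f^*M\cong M$ handles the first condition, and the adjunction $\dR f_!\dashv\dL f^*$ combined with $\dR f_!\cC_{X/Y}=0$ gives $\Hom(\cC_{X/Y}^{<0},\dL f^*M)=0$. The substantive check is $\Hom(\dL f^*M,\cC_{X/Y}^{>0})=0$: via $\dL f^*\dashv\dR f_*$ this is equivalent to $\Hom(M,\dR f_*G)=0$ for $G\in\cC_{X/Y}^{>0}$, and Proposition~\ref{prop:Ctst} places $\dR f_*\cC_{X/Y}^0$ inside $\Coh_0(Y)[-1]$, so $\dR f_*G$ lives in standard cohomological degrees $\ge 2$; the vanishing then follows by t-structure truncation, since $\aA_Y$ lies in non-positive standard degrees in the cases we use. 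Step (ii) is the same adjunction: for $F\in\cC_{X/Y}^0$ and $M\in\aA_Y$, $\Hom(F,\dL f^*M)=\Hom(\dR f_!F,M)=0$ because $F\in\cC_{X/Y}$ forces $\dR f_!F=0$.

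Step (iii) is the main point. Given $E\in\aA_X$, I form the adjunction triangle
\begin{align*}
T\to E\to \dL f^*\dR f_!E\to T[1]
\end{align*}
coming from the unit of $\dR f_!\dashv\dL f^*$. Since $\dR f_!E\in\aA_Y$, the right-hand term lies in $\dL f^*\aA_Y\subset\aA_X$ by step (i), and applying the exact functor $\dR f_!$ to the triangle (using $\dR f_!\dL f^*=\mathrm{id}$) gives $\dR f_!T=0$, so $T\in\cC_{X/Y}$. The principal technical point, and the main obstacle of the proof, is to upgrade $T\in\cC_{X/Y}$ to $T\in\cC_{X/Y}^0$. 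This I would do by testing the triangle against arbitrary $G\in\cC_{X/Y}^{<0}$ and $G\in\cC_{X/Y}^{>0}$ via the long exact $\Hom$-sequence: the middle terms $\Hom(G,E)$ and $\Hom(E,G)$ vanish by $E\in\aA_X$, and the flanking terms involving $\dL f^*\dR f_!E$ vanish either via the adjunction $\dR f_!G=0$ (for $\Hom(G,\dL f^*\dR f_!E[*])$) or by the same cohomological degree computation used in step (i) (for $\Hom(\dL f^*\dR f_!E[*],G)$). Once $T\in\cC_{X/Y}^0$, the triangle becomes the desired short exact sequence in $\aA_X$, which simultaneously yields the equality $\aA_X=\langle\cC_{X/Y}^0,\dL f^*\aA_Y\rangle$ and the torsion pair property.
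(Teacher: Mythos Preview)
Your approach is essentially the same as the paper's: form the adjunction triangle $T\to E\to\dL f^{\ast}\dR f_{!}E$, verify $T\in\cC_{X/Y}$, and then upgrade to $T\in\cC_{X/Y}^{0}$; the paper phrases the last step via the long exact sequence of $\hH^{i}_{\cC_{X/Y}^{0}}$ inside $\aA_X$ rather than by direct $\Hom$-testing, but the content is identical. One point to tighten: for the vanishing $\Hom(\dL f^{\ast}M,\cC_{X/Y}^{\ge 0})=0$ (which you need in the stronger $\ge 0$ form for step~(iii)), you invoke that ``$\aA_Y$ lies in non-positive standard degrees in the cases we use,'' but the lemma is stated only under the hypothesis $\oO_y\in\aA_Y$. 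The paper's argument avoids this restriction: since $\Coh_0(Y)=\langle\oO_y:y\in Y\rangle\subset\aA_Y$ and $\dR f_{\ast}\cC_{X/Y}^{\ge 0}$ has zero-dimensional standard cohomology in degrees $\ge 1$, one gets $\dR f_{\ast}\cC_{X/Y}^{\ge 0}\subset\aA_Y^{\ge 1}$ directly, hence the required vanishing for any $M\in\aA_Y$.
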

\begin{proof}
The proof is very similar to Lemma~\ref{lem:C=CS}. 
First we show that the RHS of (\ref{ACA}) is contained 
in the LHS of (\ref{ACA}). 
It is obvious that $\cC_{X/Y}^{0}$ is contained in the 
LHS, so we show that $\dL f^{\ast} \aA_Y$ is contained in
the LHS. 
For $M \in \aA_Y$, we have 
$\dR f_{!} \dL f^{\ast}M \cong M \in \aA_Y$
and 
\begin{align}\label{check:ad}
\Hom(\cC_{X/Y}, \dL f^{\ast}M) \cong 0
\end{align}
 by the adjunction. Also we have 
\begin{align}\notag
\Hom(\dL f^{\ast}M, \cC_{X/Y}^{\ge 0}) &\cong 
\Hom(M, \dR f_{\ast} \cC_{X/Y}^{\ge 0}) \\
\label{vanish2}
&\cong 0
\end{align}
since $\dR f_{\ast} \cC_{X/Y}^{\ge 0} \subset D^{>0} \Coh(Y)$
with cohomology sheaves zero dimensional,
and $\Coh_0(Y) \subset \aA_Y$. 
Therefore $\dL f^{\ast} M$ is an object in 
$\aA_X$ by the definition of the gluing. 

Conversely, we show that $\aA_X$ is contained in the 
RHS of (\ref{ACA}). 
For an object $E\in \aA_X$, there is a 
distinguished triangle
\begin{align*}
F \to E \to \dL f^{\ast} \dR f_{!}E
\end{align*}
with $F \in \cC_{X/Y}$. 
Similarly to the proof of Lemma~\ref{lem:C=CS}, we have the 
exact sequence in $\aA_X$
\begin{align*}
0 \to \hH_{\cC_{X/Y}^{0}}^{0}(F) \to E \to \dL f^{\ast} \dR f_{!}E
\to \hH_{\cC_{X/Y}^{0}}^{1}(F) \to 0
\end{align*}
and $\hH_{\cC_{X/Y}^{0}}^{i}(F)=0$ for $i\neq 0, 1$. 
By the vanishing (\ref{vanish2}), 
we also have $\hH_{\cC_{X/Y}^{0}}^1(F)=0$ and 
$F \in \cC_{X/Y}^{0}$. Consequently we have the 
exact sequence in $\aA_{X}$
\begin{align}\label{FEdLf}
0 \to F \to E \to \dL f^{\ast} \dR f_{!}E \to 0
\end{align}
with $F \in \cC_{X/Y}^{0}$. 
Therefore $E$
is contained in the LHS of (\ref{ACA}).  
By (\ref{check:ad}) and (\ref{FEdLf}), 
the pair $(\cC_{X/Y}^{0}, \dL f^{\ast} \aA_Y)$
is a torsion pair on $\aA_X$. 
\end{proof}
\section{Proof of Theorem~\ref{thm:intro}}\label{sec:proof}
In this section, we construct 
a connected open subset 
\begin{align*}
U(Y) \subset \Stab(X)_{\mathbb{R}}
\end{align*}
for each birational morphism $f\colon X \to Y$, 
and prove Theorem~\ref{thm:intro}. 
In what follows, we always assume that 
$f \colon X \to Y$ is a birational
morphism between smooth projective surfaces. 

\subsection{Central charges corresponding to $U(Y)$}
Let
\begin{align*}
\mathrm{NS}_f(X)_{\mathbb{R}} \subset \mathrm{NS}(X)_{\mathbb{R}}
\end{align*}
be the orthogonal complement of 
$f^{\ast} \mathrm{NS}(Y)$ with respect to the 
intersection pairing. 
Note that $\mathrm{NS}_f(X)_{\mathbb{R}}$ is a linear subspace
of $\mathrm{NS}(X)_{\mathbb{R}}$
spanned by the irreducible components of the exceptional 
locus of $f$. 
For fixed $k>0$, we set 
\begin{align*}
C_{f, k}(X) \cneq \left\{ D \in \mathrm{NS}_f(X)_{\mathbb{R}} :
\begin{array}{l}
D \cdot c_1(F)>0 \mbox{ for all } \\
F \in \cC_{X/Y}^{0}, \ 
D^2 + k >0.
\end{array} \right\}. 
\end{align*}
We have the following lemma:
\begin{lem}\label{lem:Cf}
$C_{f, k}(X)$ is a non-empty connected
open subset of $\mathrm{NS}_f(X)_{\mathbb{R}}$. 
\end{lem}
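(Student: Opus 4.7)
The plan is to handle openness, connectedness via convexity, and non-emptiness by an inductive construction, the last being the main obstacle.

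\emph{Openness.} By Proposition~\ref{prop:Ctst}, the heart $\cC_{X/Y}^{0}$ is the extension closure of finitely many objects $S_1,\ldots,S_N$, so the condition $D\cdot c_1(F)>0$ for all non-zero $F\in\cC_{X/Y}^{0}$ is equivalent to the finite system of strict linear inequalities $D\cdot c_1(S_i)>0$. Combined with the quadratic strict inequality $D^2+k>0$, this presents $C_{f,k}(X)$ as the intersection of finitely many strict open conditions, hence open.

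\emph{Connectedness.} The intersection form on $\mathrm{NS}_f(X)_{\mathbb{R}}$ is negative definite: for any ample class $H$ on $Y$ one has $(f^{\ast}H)^2=H^2>0$, and $\mathrm{NS}_f(X)_{\mathbb{R}}$ is contained in the orthogonal complement of $f^{\ast}H$, on which the form is negative definite by the Hodge index theorem. Hence $\{D^2+k>0\}$ is a convex open ellipsoid about the origin, the linear inequalities cut out a convex open cone, and $C_{f,k}(X)$, being the intersection of convex sets, is itself convex; any non-empty convex subset of $\mathrm{NS}_f(X)_{\mathbb{R}}$ is connected.

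\emph{Non-emptiness.} After decomposing $\cC_{X/Y}$ according to $f(\Ex(f))$, one reduces to the case $f(\Ex(f))=\{p\}$, and proceeds by induction on $N$. Factor $f=h\circ g$ with $h\colon Y'\to Y$ the blow-up at $p$ contracting a $(-1)$-curve $C\subset Y'$. Proposition~\ref{prop:Ctst} describes the generators of $\cC_{X/Y}^{0}$ as $S_1,\ldots,S_{N-1},S_N=\oO_{\widehat{C}}[-1]$ where $\widehat{C}=g^{\ast}C$, with each $S_i$ ($i<N$) fitting in a distinguished triangle $S_i\to\oO_{\widehat{C}}^{\oplus m_i}\to S'_i[1]$, and $S'_1,\ldots,S'_{N-1}$ generating $\cC_{X/Y'}^{0}$. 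By the inductive hypothesis, pick $D'\in\mathrm{NS}_g(X)_{\mathbb{R}}\subset\mathrm{NS}_f(X)_{\mathbb{R}}$ with $D'\cdot c_1(S'_j)>0$ for all $j$, and set $D_0=D'+\varepsilon\widehat{C}$ for small $\varepsilon>0$. The three identities that drive the computation are $g_{\ast}D'=0$ (projection formula applied to $D'\in\mathrm{NS}_g(X)_{\mathbb{R}}$, together with the non-degeneracy of the intersection form on $\mathrm{NS}(Y')_{\mathbb{R}}$), $\widehat{C}^2=C^2=-1$ (projection formula), and $\widehat{C}\cdot c_1(S'_j)=0$ (because $\dR g_{\ast}S'_j[1]$ is a zero-dimensional sheaf by Proposition~\ref{prop:Ctst}, forcing $c_1(S'_j)\in\mathrm{NS}_g(X)_{\mathbb{R}}$). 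Taking Chern characters in the triangles above gives $c_1(S_i)=m_i\widehat{C}+c_1(S'_i)$ for $i<N$ and $c_1(S_N)=-\widehat{C}$, and the three identities then yield $D_0\cdot c_1(S_N)=\varepsilon$ and $D_0\cdot c_1(S_i)=D'\cdot c_1(S'_i)-m_i\varepsilon$, both strictly positive for $\varepsilon$ sufficiently small. Finally, replacing $D_0$ by $tD_0$ for small $t>0$ preserves the scale-invariant linear inequalities and achieves $t^2D_0^2+k>0$ by negative definiteness, producing a point of $C_{f,k}(X)$.
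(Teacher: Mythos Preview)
Your proof is correct but follows a different route from the paper. The paper invokes the explicit generators from Proposition~\ref{prop:gen}: writing $D=\sum_i t_i[\widehat{C}_i]$ in the basis of total transforms and using the type~I/type~II classification, it computes $D\cdot c_1(S_i)$ as $t_i$ or $t_i-t_{\kappa(i)}$, so that $C_{f,k}(X)$ becomes the explicit region $\{t_i>0\ (\text{type I}),\ t_i>t_{\kappa(i)}\ (\text{type II}),\ \sum t_i^2<k\}$, from which all three properties are read off at once. You instead separate the three properties: openness via the finite generation in Proposition~\ref{prop:Ctst}, connectedness via convexity (Hodge index making $\{D^2+k>0\}$ an ellipsoid), and non-emptiness via the recursive structure of Proposition~\ref{prop:Ctst} and induction on $N$. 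The paper's argument is shorter and gives a concrete picture of the region, at the cost of importing the non-canonical choice of factorization (\ref{compose}); your argument is more intrinsic and uses only the recursive description of $\cC_{X/Y}^{0}$, though it requires a bit more bookkeeping. One small organizational point: your reduction to $f(\Ex(f))=\{p\}$ and the induction on $N$ should be interleaved rather than sequential, since after factoring $f=h\circ g$ the morphism $g$ need not have connected exceptional image; the cleanest phrasing is to induct on $N$ for the full statement and perform the decomposition inside the inductive step.
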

\begin{proof}
We factorize $f\colon X \to Y$ into the
composition of blow-downs as in (\ref{compose}). 
In the notation of Subsection~\ref{subsec:gen}, 
we have 
\begin{align*}
\mathrm{NS}_f(X)_{\mathbb{R}}= \bigoplus_{i=1}^{N} \mathbb{R}
[\widehat{C}_i]
\end{align*}
for $\widehat{C}_i=f_i^{\ast}C_i$. 
For $D \in \mathrm{NS}_f(X)_{\mathbb{R}}$, 
it is contained in $C_{f, k}(X)$ if and only if 
$D \cdot c_1(S_i)>0$ for all $1\le i\le N$, 
where $S_i$ is given in Subsection~\ref{subsec:gen}, 
and $D^2 +k>0$. 
If we write $D\in \mathrm{NS}_f(X)_{\mathbb{R}}$ as 
\begin{align*}
D=\sum_{i=1}^{N} t_i [\widehat{C}_i]
\end{align*}
for $t_i \in \mathbb{R}$, then 
$D \cdot c_1(S_i)$ is calculated as  
\begin{align*}
D \cdot c_1(S_i)= \left\{ \begin{array}{cc}
t_i, & i \mbox{ is of type I} \\
t_i - t_{\kappa(i)}, & i \mbox{ is of type II}. 
\end{array} \right. 
\end{align*}
Therefore $C_{f, k}(X)$ is identified with  
\begin{align*}
C_{f, k}(X)= \left\{ (t_1, \cdots, t_N) \in \mathbb{R}^N : 
\begin{array}{l}
t_i >0, \  i \mbox{ is of type I} \\
t_i >t_{\kappa(i)}, \ i \mbox{ is of type II} \\
t_1^2 + \cdots + t_N^2 <k.  
\end{array}
\right\}. 
\end{align*}
Hence $C_{f, k}(X)$ is a non-empty connected
open subset of $\mathrm{NS}_f(X)_{\mathbb{R}}$. 
\end{proof}
We consider the following sets
\begin{align*}
A^{\dag}(Y) &\cneq 
\{f^{\ast}\omega  +D: \omega \in A(Y), \ 
D \in C_{f, \omega^2}(X) \} \\
\overline{A}^{\dag}(Y) &\cneq 
\{f^{\ast}\omega  +D: \omega \in \overline{A}(Y), \ 
D \in C_{f, \omega^2}(X) \}. 
\end{align*} 
The set $A^{\dag}(Y)$
is a topological fiber bundle 
\begin{align*}
f_{\ast} \colon 
A^{\dag}(Y) \to A(Y)
\end{align*}
whose fiber at $\omega$
is $C_{f, \omega^2}(X)$. 
By Lemma~\ref{lem:Cf}, 
$A^{\dag}(Y)$ 
is an open 
connected subset of $\mathrm{NS}(X)_{\mathbb{R}}$, 
and $\overline{A}^{\dag}(Y)$ is its partial 
compactification. 
We will 
consider the central charges
of the form  
\begin{align*}
Z_{f^{\ast}\omega+D} \in N(X)_{\mathbb{C}}^{\vee}, 
\quad  f^{\ast}\omega + D \in \overline{A}^{\dag}(Y).
\end{align*}
The compatible t-structure will be given in the 
next subsection.

\subsection{t-structures corresponding to $U(Y)$}
For a rational point
 $\omega \in \overline{A}(Y)$, 
we have the heart of a bounded t-structure 
\begin{align*}
\aA_{\omega} \subset D^b \Coh(Y)
\end{align*}
constructed in Subsection~\ref{tilt:Y}. 
By the construction, 
 all the objects $\oO_y$ 
for $y\in Y$ are
contained in $\aA_{\omega}$. 
Therefore Lemma~\ref{lem:ACA}
implies the existence of a bounded t-structure
on $D^b \Coh(X)$ with heart given by 
\begin{align}\label{tAXY}
\aA_{\omega}(X/Y) \cneq 
\langle \cC_{X/Y}^{0}, \dL f^{\ast} \aA_{\omega} \rangle
\end{align}
such that $(\cC_{X/Y}^{0}, \dL f^{\ast} \aA_{\omega})$
is a torsion pair on $\aA_{\omega}(X/Y)$. 
Later we will need the following lemma: 
\begin{lem}\label{closed:under}
The subcategories 
\begin{align*}
\cC_{X/Y}^{0}, \dL f^{\ast} \aA_{\omega}
 \subset \aA_{\omega}(X/Y)
\end{align*}
are closed under subobjects and quotients. 
\end{lem}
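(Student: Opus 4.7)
The plan is to exploit the torsion pair structure $(\cC_{X/Y}^{0}, \dL f^{\ast}\aA_{\omega})$ on $\aA_{\omega}(X/Y)$ from Lemma~\ref{lem:ACA} together with the t-exactness of $\dR f_{!}$ with respect to the glued heart. By the general formalism of torsion pairs, the torsion class $\cC_{X/Y}^{0}$ is automatically closed under quotients and the torsion-free class $\dL f^{\ast}\aA_{\omega}$ is automatically closed under subobjects. So the content of the lemma is to establish the two non-automatic closures: that $\cC_{X/Y}^{0}$ is closed under subobjects, and that $\dL f^{\ast}\aA_{\omega}$ is closed under quotients.

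For the first closure, I would invoke the standard gluing fact that $\dR f_{!} \colon \aA_{\omega}(X/Y) \to \aA_{\omega}$ is exact, which is built into the BBD construction via the exact triple $\cC_{X/Y} \to D^b \Coh(X) \stackrel{\dR f_{!}}{\to} D^b \Coh(Y)$ as used in Subsection~\ref{subsec:glue}. Given an exact sequence $0 \to F \to E \to G \to 0$ in $\aA_{\omega}(X/Y)$ with $E \in \cC_{X/Y}^{0}$, applying $\dR f_{!}$ yields a short exact sequence in $\aA_{\omega}$ whose middle term $\dR f_{!}E$ vanishes, forcing $\dR f_{!}F = \dR f_{!}G = 0$, i.e.\ $F, G \in \cC_{X/Y}$. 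Since an object of $\cC_{X/Y}$ lies in $\aA_{\omega}(X/Y)$ if and only if it already lies in $\cC_{X/Y}^{0}$ (read off directly from the gluing description of the heart), one concludes $F, G \in \cC_{X/Y}^{0}$.

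For the second closure, the key ingredient is the vanishing
\begin{align*}
\Ext^{1}_{\aA_{\omega}(X/Y)}(T, \dL f^{\ast}M) \cong \Hom(\dR f_{!}T, M[1]) = 0, \quad T \in \cC_{X/Y}^{0},\ M \in \aA_{\omega},
\end{align*}
which is immediate from the adjunction $(\dR f_{!}, \dL f^{\ast})$ together with $\dR f_{!}T = 0$. Given $0 \to F \to E \to G \to 0$ in $\aA_{\omega}(X/Y)$ with $E \in \dL f^{\ast}\aA_{\omega}$, I would form the canonical torsion decomposition $0 \to t(G) \to G \to f(G) \to 0$ and pull back the subobject $t(G) \subset G$ along $E \twoheadrightarrow G$ to obtain an exact sequence
\begin{align*}
0 \to F \to \widetilde{F} \to t(G) \to 0
\end{align*}
with $\widetilde{F}$ a subobject of $E$. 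By the Ext$^1$-vanishing above this extension splits, so $t(G)$ is a direct summand of $\widetilde{F}$. Since $\widetilde{F} \subset E \in \dL f^{\ast}\aA_{\omega}$ and the torsion-free class is closed under subobjects, $\widetilde{F} \in \dL f^{\ast}\aA_{\omega}$, and therefore so is its summand $t(G)$. Finally $\dL f^{\ast}\aA_{\omega} \cap \cC_{X/Y}^{0} = 0$, since any object $E'$ in the intersection satisfies $E' \cong \dL f^{\ast}\dR f_{!}E' = 0$. Hence $t(G) = 0$ and $G = f(G) \in \dL f^{\ast}\aA_{\omega}$.

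I do not anticipate a serious obstacle: both closures are essentially forced by the gluing setup, with the only substantive input being the t-exactness of $\dR f_{!}$ (a standard output of BBD gluing) and the one-line Ext$^1$ computation via adjunction. The mildest care is needed in identifying $\cC_{X/Y} \cap \aA_{\omega}(X/Y)$ with $\cC_{X/Y}^{0}$, but this is immediate from the gluing characterization of the heart.
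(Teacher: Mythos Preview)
Your proposal is correct and uses exactly the same ingredients as the paper: the automatic torsion-pair closures, the exactness of $\dR f_{!}$ on the glued heart, and the adjunction vanishing $\dR\Hom(\cC_{X/Y}^{0}, \dL f^{\ast}\aA_{\omega})=0$. The only difference is in the argument for closure of $\dL f^{\ast}\aA_{\omega}$ under quotients: the paper proceeds more directly, observing that since $E_1 \in \dL f^{\ast}\aA_{\omega}$ one has $\Ext^1(F,E_1)=0$ for all $F\in\cC_{X/Y}^{0}$, and then the long exact sequence gives $\Hom(F,E_2)=0$, so $E_2$ is torsion-free by the torsion pair; your pullback-and-split argument reaches the same conclusion $t(G)=0$ by a slightly longer detour. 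Both are correct and rest on the same vanishing, so this is a cosmetic difference rather than a genuinely different route.
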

\begin{proof}
Since $(\cC_{X/Y}^{0}, \dL f^{\ast} \aA_{\omega})$
is a torsion pair on $\aA_{\omega}(X/Y)$, 
the subcategory $\cC_{X/Y}^{0}$ is closed under quotients, 
and the subcategory $\dL f^{\ast} \aA_{\omega}$ is closed 
under subobjects. For $F \in \cC_{X/Y}^{0}$, 
suppose that $A \hookrightarrow F$ is an injection
in $\aA_{\omega}(X/Y)$. Then it induces an injection
$\dR f_{!}A \hookrightarrow \dR f_{!}F$ in
$\aA_{\omega}$. Since $\dR f_{!}F=0$, we have 
$\dR f_{!}A=0$, hence $A \in \cC_{X/Y}^{0}$. 
This implies that $\cC_{X/Y}^{0}$ is also 
closed under subobjects. 

For $M \in \aA_{\omega}$, let us take an exact sequence in 
$\aA_{\omega}(X/Y)$
\begin{align*}
0 \to E_1 \to \dL f^{\ast}M \to E_2 \to 0. 
\end{align*}
As we observed before, we have $E_1 \in \dL f^{\ast} \aA_{\omega}$. 
For any $F \in \cC_{X/Y}^{0}$, we have  
\begin{align*}
\dR \Hom(F, \dL f^{\ast} \aA_{\omega})=0
\end{align*}
since $\dR f_{!}F=0$. Therefore we have 
$\Hom(F, E_2)=0$, 
hence $E_2 \in \dL f^{\ast}\aA_{\omega}$
since $(\cC_{X/Y}^{0}, \dL f^{\ast} \aA_{\omega})$
is a torsion pair on $\aA_{\omega}(X/Y)$. 
This implies that $\dL f^{\ast}\aA_{\omega}$ is 
also closed under quotients. 
\end{proof}

We will also need the following lemma:
\begin{lem}\label{lem:noether}
The abelian category $\aA_{\omega}(X/Y)$ is noetherian.
\end{lem}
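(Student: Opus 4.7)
The plan is to leverage the torsion pair structure. By Lemma~\ref{lem:ACA}, $(\cC_{X/Y}^{0}, \dL f^{\ast}\aA_{\omega})$ is a torsion pair on $\aA_{\omega}(X/Y)$, and by Lemma~\ref{closed:under} both constituents are closed under subobjects and quotients, so each is an abelian subcategory in its own right. The key general fact to apply is that whenever $(\tT,\fF)$ is a torsion pair on an abelian category $\aA$ and both $\tT$ and $\fF$ are noetherian, so is $\aA$. Indeed, for an ascending chain $E_1 \subseteq E_2 \subseteq \cdots$ in $\aA$ one first stabilizes the induced chain of torsion parts $T(E_i)$ in $\tT$ (which is ascending because $\tT$ is closed under subobjects); once $T(E_i)=T$ is constant for $i\ge N_0$, a snake-lemma argument applied to the diagram with rows $0 \to T \to E_i \to F(E_i)\to 0$ forces $F(E_i)\hookrightarrow F(E_{i+1})$ in $\fF$, and stabilizing this chain forces $E_i$ to stabilize.

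It thus suffices to show that $\cC_{X/Y}^{0}$ and $\dL f^{\ast}\aA_{\omega}$ are noetherian. For $\cC_{X/Y}^{0}$, Proposition~\ref{prop:Ctst} expresses it as the extension closure of the finite collection $S_1,\ldots,S_N$ from Subsection~\ref{subsec:gen}, each satisfying that $\dR f_{\ast} S_i[1]$ is a zero-dimensional sheaf. Since $\dR f_{\ast}[1]$ is exact from $\cC_{X/Y}^{0}$ to $\Coh_0(Y)$, the integer-valued invariant $\ell(F) \cneq \mathrm{length}(\dR f_{\ast}F[1])$ is additive on short exact sequences, and any strictly ascending chain would produce strictly positive quotients in each step, contradicting the fact that all objects lie in the finite extension closure of $\{S_i\}$. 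In effect $\cC_{X/Y}^{0}$ is of finite length; alternatively one can invoke the Van den Bergh description \cite{MVB} under which $\cC_{X/Y}^{0}$ corresponds to finite-dimensional modules over a finite-dimensional $\mathbb{C}$-algebra supported on the exceptional fibers.

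For $\dL f^{\ast}\aA_{\omega}$, full faithfulness of $\dL f^{\ast}$ reduces noetherianness to that of $\aA_{\omega}$ itself. The latter is the $\mu_{\omega}$-slope tilt of $\PPer(Y/Z)$ (or $\Coh(Y)$ when $\omega$ is already ample on $Y$), and its noetherianness follows by the usual tilting arguments for slope torsion pairs on surfaces, together with the BG-type inequality from~\cite{Todext}. The main obstacle I anticipate is precisely this last step: verifying noetherianness of $\aA_{\omega}$ on an arbitrary (non-K3, non-abelian) projective surface is not formal and must use the surface-specific finiteness developed in earlier sections/\cite{Todext}. Once that base case is in place, the torsion pair argument above finishes the proof cleanly.
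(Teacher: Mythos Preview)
Your proposal is correct and follows essentially the same approach as the paper: both reduce to the noetherianness of $\cC_{X/Y}^{0}$ (as the extension closure of finitely many objects) and of $\aA_{\omega}$ (cited from \cite{Todext}), glued via the torsion pair of Lemma~\ref{lem:ACA}. The paper's version pushes a descending chain of quotients along $\dR f_{!}$ to stabilize in $\aA_{\omega}$ and then traps the kernels inside a fixed object of $\cC_{X/Y}^{0}$, which is exactly your torsion/free argument in disguise, since $\dL f^{\ast}\dR f_{!}$ is the torsion-free projection for this pair.
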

\begin{proof}
Suppose that there is an infinite sequence of 
surjections in $\aA_{\omega}(X/Y)$
\begin{align}\label{inf:noe}
E=E_1 \twoheadrightarrow E_2 \twoheadrightarrow \cdots \twoheadrightarrow
E_i 
\twoheadrightarrow E_{i+i} \twoheadrightarrow \cdots. 
\end{align}
Applying $\dR f_{!}$ to the sequence (\ref{inf:noe}), 
we obtain surjections 
\begin{align}\label{surj:A}
\dR f_{!} E_{i} \twoheadrightarrow 
\dR f_{!} E_{i+1}
\end{align} in $\aA_{\omega} \subset D^b \Coh(Y)$. 
Since $\aA_{\omega}$ is noetherian by the proof 
of~\cite[Lemma~5.2]{Todext}, 
we may assume that (\ref{surj:A}) 
are isomorphisms for all $i$. 
Hence if we take the exact sequences in $\aA_{\omega}(X/Y)$
\begin{align*}
0 \to F_i \to E \to E_{i} \to 0 
\end{align*}
then $F_i \in \cC_{X/Y}^{0}$. On the other hand, 
we have the exact sequence in $\aA_{\omega}(X/Y)$
\begin{align*}
0 \to F \to E \to \dL f^{\ast} M \to 0
\end{align*}
for $F \in \cC_{X/Y}^{0}$ and $M\in \aA_{\omega}$. 
Since $\Hom(F_i, \dL f^{\ast}M)=0$, we have the 
sequence of injections in $\aA_{\omega}(X/Y)$
\begin{align*}
F_1 \hookrightarrow F_2 \hookrightarrow \cdots \hookrightarrow F. 
\end{align*}
By Lemma~\ref{closed:under}, the above sequence is a sequence of 
injections in $\cC_{X/Y}^{0}$. Since $\cC_{X/Y}^{0}$ is 
the extension closure of a finite number of objects, 
it is noetherian, hence the above sequence terminates. 
Therefore the sequence (\ref{inf:noe}) also terminates.  
\end{proof}

\subsection{Construction of $U(Y)$}
For $f^{\ast}\omega +D \in \overline{A}^{\dag}(Y)$
with $\omega$, $D$ rational, we 
consider the pair 
\begin{align}\label{pair:omegaD}
\sigma_{f^{\ast}\omega +D} \cneq 
(Z_{f^{\ast}\omega+D}, \aA_{\omega}(X/Y)). 
\end{align}
The purpose here is to show 
that $\sigma_{f^{\ast} \omega +D}$ gives
a point in $\Stab(X)_{\mathbb{R}}$. 
We first prepare a lemma:
let us consider the central charge 
$Z_{\omega, D} \in N(Y)^{\vee}_{\mathbb{C}}$
defined by
\begin{align*}
Z_{\omega, D}(M) \cneq
Z_{\omega}(M) + \frac{D^2}{2} \ch_0(M). 
\end{align*}
Note that, for any $M \in D^b \Coh(Y)$, it 
satisfies the following equality: 
\begin{align}\label{ZodM}
Z_{f^{\ast}\omega +D}(\dL f^{\ast} M)
= Z_{\omega, D}(M). 
\end{align}
\begin{lem}\label{lem:prepare}
We have 
\begin{align*}
\sigma_{\omega, D} \cneq  
(Z_{\omega, D}, \aA_{\omega}) \in \Stab(Y). 
\end{align*}
\end{lem}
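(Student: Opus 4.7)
The plan is to verify, in turn, the three defining axioms of a Bridgeland stability condition on $Y$ for the pair $\sigma_{\omega, D} = (Z_{\omega, D}, \aA_\omega)$: positivity, the Harder--Narasimhan property, and the support property. The anchor is Proposition~\ref{prop:stab} applied to $\id_Y$ (or to the contraction factorization of $\omega$ when $\omega$ lies on the boundary), which yields $\sigma_\omega = (Z_\omega, \aA_\omega) \in \Stab(Y)$ for rational $\omega \in \overline{A}(Y)$. Since $Z_{\omega, D}$ differs from $Z_\omega$ only by the term $\tfrac{D^2}{2}\ch_0$ in the real part, much of the structure transfers from $\sigma_\omega$ for free, and the effort is concentrated on the phase-one locus (for positivity) and on the uniform estimate (for the support property).

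For positivity, $\Im Z_{\omega, D} = \Im Z_\omega \ge 0$ on $\aA_\omega$, so I would only need to check strict negativity of $\Re Z_{\omega, D}$ on the locus $\{M : \Im Z_\omega(M) = 0\}$. Any such $M$ has all of its $\sigma_\omega$-HN factors of phase one, and each factor is either of class $\ch_0 = 0$, where $Z_{\omega, D}$ agrees with $Z_\omega$ and positivity of $\sigma_\omega$ applies directly, or of the form $F[1]$ for a $\mu_\omega$-semistable torsion-free sheaf $F$ with $c_1(F) \cdot \omega = 0$. In the latter case, Bogomolov combined with the Hodge index theorem (using $\omega^2 > 0$, which holds since $\omega \in \overline{A}(Y)$ is big and nef) yields $\ch_2(F) \le 0$, and the defining inequality $\omega^2 + D^2 > 0$ of $C_{f, \omega^2}(X)$ from Lemma~\ref{lem:Cf} then gives
\[
\Re Z_{\omega, D}(F[1]) = \ch_2(F) - \frac{\omega^2 + D^2}{2}\ch_0(F) \le -\frac{\omega^2 + D^2}{2}\ch_0(F) < 0.
\]
Summing over HN factors yields the positivity on $M$. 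The Harder--Narasimhan property then follows from the noetherianity of $\aA_\omega$ (proved in the course of \cite[Lemma~5.2]{Todext}) together with the discreteness of $\{\ch_1(E) \cdot \omega : E \in \aA_\omega\}$ forced by rationality of $\omega$; these two inputs match the classical noetherian-plus-discrete-imaginary-part criterion of \cite{Brs1} for constructing HN filtrations.

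The main obstacle is the support property. My plan is to interpolate between $\sigma_\omega$ and $\sigma_{\omega, D}$ along the segment $Z_t = Z_\omega + t \cdot \tfrac{D^2}{2}\ch_0$, $t \in [0, 1]$: since $\Im Z_t = \Im Z_\omega$ and the positivity argument above applies verbatim with $D^2$ replaced by $tD^2$ (using $\omega^2 + tD^2 > 0$ for every $t \in [0,1]$), each pair $(Z_t, \aA_\omega)$ satisfies positivity and HN, and no object of $\aA_\omega$ ever leaves the slice of phases $(0, 1]$, so the heart is genuinely constant along the segment. The task is then to propagate the Bogomolov--Gieseker inequality $\ch_1(E)^2 \ge 2\ch_0(E)\ch_2(E)$ from $t = 0$, where it is \cite[Proposition~3.13]{Todext} specialized to $f = \id_Y$, to $t = 1$. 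The hard part will be the wall-crossing bookkeeping along this one-parameter family: the set of $t$ at which a given numerical class admits a $(Z_t, \aA_\omega)$-semistable representative is locally constant outside a locally finite wall set, and across each wall the BG inequality should be preserved by the Hodge-index-type convexity of the discriminant on classes sharing a common $Z_t$-phase. Once BG is in hand for $Z_{\omega, D}$-semistable classes, the explicit form of $Z_{\omega, D}$ together with $\omega^2 + D^2 > 0$ delivers the required uniform bound $|Z_{\omega, D}(E)| \ge c\|E\|$; the rest is inherited from the stability of $\sigma_\omega$.
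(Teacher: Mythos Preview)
Your proposal is correct. For positivity and the Harder--Narasimhan property you take essentially the same route as the paper: the paper cites the proof of \cite[Lemma~3.12]{Todext} for the dichotomy on Chern characters of phase-one objects, which is exactly what your Bogomolov-plus-Hodge-index argument unpacks, and both arguments appeal to noetherianity of $\aA_\omega$ plus discreteness of the imaginary part for HN.

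The genuine difference is in the support property. The paper uses the family $\{\sigma_{s\omega,D}\}_{s\ge 1}$, scaling $\omega$ towards large volume, and observes that since $D\in C_{f,s^2\omega^2}(X)$ for every $s\ge 1$, the wall-crossing argument of \cite[Theorem~3.23]{Todext} transfers verbatim; no new bookkeeping is needed. You instead interpolate along $Z_t=Z_\omega+t\tfrac{D^2}{2}\ch_0$ from $t=0$ (where BG is already known by \cite{Todext}) to $t=1$, and propose to redo the wall-crossing along this segment. This works---the discriminant is indeed negative definite on $\ker Z_t$ by Hodge index together with $\omega^2+tD^2>0$, so BG propagates---but it is more labor than the paper's direct citation. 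In fact your route hides a shortcut: since the intersection form on $\mathrm{NS}_f(X)_{\mathbb{R}}$ is negative definite, one has $D^2\le 0$, and then $Z_{\omega,D}$ differs from $Z_{s_0\omega}$ with $s_0=\sqrt{(\omega^2+D^2)/\omega^2}$ only by rescaling the imaginary part, i.e.\ by an element of $\widetilde{\GL}^{+}_2(\mathbb{R})$. Thus $\sigma_{\omega,D}$ lies in the $\widetilde{\GL}^{+}_2(\mathbb{R})$-orbit of $\sigma_{s_0\omega}\in\Stab(Y)$, and the support property follows immediately from Proposition~\ref{prop:stab} with no wall-crossing at all. Both the paper's family and yours are, after this reparametrization, sub-intervals of $\{\sigma_{s\omega}\}_{s>0}$.
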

\begin{proof}
Note that the pair $(Z_{\omega}, \aA_{\omega})$
is shown to be an element of $\Stab(Y)$
in~\cite[Lemma~3.12]{Todext}, and almost the same 
argument is applied. Indeed
for a non-zero $M\in \aA_{\omega}$, 
$Z_{\omega, D}(M)$ is written as 
\begin{align}\notag
-\ch_2(M) + \frac{\ch_0(M)}{2}(D^2 +\omega^2)
+i\ch_1(M) \cdot \omega. 
\end{align}
By the construction of $\aA_{\omega}$, 
we have $\ch_1(M) \cdot \omega \ge 0$. 
Moreover, the proof of~\cite[Lemma~3.12]{Todext}
shows that if $\ch_1(M) \cdot \omega =0$, 
then $M$ satisfies either 
$\ch_0(M)<0, \ch_2(M) \ge 0$ or $\ch_0(M)=0, \ch_2(M)>0$. 
By the definition of $C_{f, \omega^2}(X)$, we have 
$D^2 + \omega^2>0$, hence 
$Z_{\omega, D}$ satisfies the property (\ref{pro:1}). 

The proofs for other properties are also 
the same as in~\cite[Lemma~3.12]{Todext}. 
Indeed the abelian category $\aA_{\omega}$ is noetherian, 
so there is no need to modify the proof for the Harder-Narasimhan 
property. As for the support property, since 
$D \in C_{f, s^2\omega^2}(X)$ for any $s\ge 1$, 
the wall-crossing method in~\cite[Theorem~3.23]{Todext} 
for the family $\{ \sigma_{s\omega, D}\}_{s\ge 1}$
works as well. 
This implies that the Chern 
characters of $Z_{\omega, D}$-semistable objects in $\aA_{\omega}$
satisfies the same 
Bogomolov-Gieseker type inequality 
as in~\cite[Theorem~3.23]{Todext},
and the same computation in the proof of~\cite[Lemma~3.12]{Todext}
shows the support property for $\sigma_{\omega, D}$. 
Since there is no need to modify the proof, we omit the detail. 
\end{proof}

Using the above lemma, we show the following: 
\begin{lem}\label{lem:Adag}
In the above situation, 
the pair (\ref{pair:omegaD}) is a stability condition on $D^b \Coh(X)$. 
\end{lem}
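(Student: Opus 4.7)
The plan is to verify the three defining conditions of a Bridgeland stability condition satisfying the support property: (a) positivity of $Z_{f^{\ast}\omega+D}$ on non-zero objects of $\aA_{\omega}(X/Y)$, (b) the Harder--Narasimhan property, and (c) the support property of Definition~\ref{def:sprop}. Throughout I would exploit the torsion pair $(\cC_{X/Y}^{0}, \dL f^{\ast}\aA_{\omega})$ on $\aA_{\omega}(X/Y)$ from Lemma~\ref{lem:ACA}, together with the fact that $\sigma_{\omega,D} = (Z_{\omega,D}, \aA_{\omega})$ is already a stability condition on $D^b\Coh(Y)$ by Lemma~\ref{lem:prepare}.

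For (a), any $E \in \aA_{\omega}(X/Y)$ sits in an exact sequence
\begin{align*}
0 \to F \to E \to \dL f^{\ast} M \to 0
\end{align*}
with $F \in \cC_{X/Y}^{0}$ and $M \in \aA_{\omega}$. Every object of $\cC_{X/Y}$ is set-theoretically supported on $\Ex(f)$, so $\ch_0(F)=0$, and exceptional divisors pair trivially with $f^{\ast}\omega$, giving $\ch_1(F)\cdot f^{\ast}\omega = 0$. Hence
\begin{align*}
Z_{f^{\ast}\omega+D}(F) = -\ch_2(F) + i\,\ch_1(F)\cdot D,
\end{align*}
and the defining inequality of $C_{f,\omega^2}(X)$ yields $\ch_1(F)\cdot D>0$ for every non-zero $F \in \cC_{X/Y}^{0}$, so its imaginary part is strictly positive. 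Combined with the identity~(\ref{ZodM}) and Lemma~\ref{lem:prepare}, both $Z_{f^{\ast}\omega+D}(F)$ and $Z_{f^{\ast}\omega+D}(\dL f^{\ast}M)=Z_{\omega,D}(M)$ lie in the strict upper half plane or on the strictly negative real axis, hence so does their sum. Degeneration to the real axis forces $F = 0$, and the phase-one case then reduces to the stability-function property of $\sigma_{\omega,D}$.

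For (b), the category $\aA_{\omega}(X/Y)$ is noetherian by Lemma~\ref{lem:noether}. Since $\omega$ and $D$ are rational, $Z_{f^{\ast}\omega+D}$ factors through a finitely generated subgroup of $\mathbb{C}$, and Harder--Narasimhan filtrations then exist by the standard Bridgeland criterion (cf.~\cite{Brs1}).

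The main obstacle is (c), the support property. My strategy would be to mimic the proofs of Lemma~\ref{lem:prepare} and~\cite[Proposition~3.13]{Todext}: deform along the one-parameter family $\{\sigma_{f^{\ast}(s\omega)+D}\}_{s \ge 1}$, which stays inside $\overline{A}^{\dag}(Y)$ because $D \in C_{f,\omega^2}(X) \subset C_{f,(s\omega)^2}(X)$ for $s \ge 1$, and run the wall-crossing argument of~\cite[Theorem~3.23]{Todext} to establish a Bogomolov--Gieseker type inequality for $Z_{f^{\ast}\omega+D}$-semistable objects of $\aA_{\omega}(X/Y)$. For semistable objects whose torsion part in $\cC_{X/Y}^{0}$ vanishes, the inequality descends from Lemma~\ref{lem:prepare} through~(\ref{ZodM}); objects with a non-trivial torsion summand must be controlled using the finite list of generators $S_1,\ldots,S_N$ from Proposition~\ref{prop:gen} together with the vanishing $\ch_1(F)\cdot f^{\ast}\omega = 0$, both of which severely restrict the possible Chern characters. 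Once the inequality is in place, the elementary calculation of~\cite[Lemma~3.12]{Todext} produces the uniform constant $K$ of Definition~\ref{def:sprop}.
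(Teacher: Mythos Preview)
Your parts (a) and (b) are correct and essentially coincide with the paper's proof of this lemma: the paper checks property~(\ref{pro:1}) via the torsion pair $(\cC_{X/Y}^{0}, \dL f^{\ast}\aA_{\omega})$, the strict positivity of $\Imm Z_{f^{\ast}\omega+D}$ on non-zero objects of $\cC_{X/Y}^{0}$, and the identity~(\ref{ZodM}) combined with Lemma~\ref{lem:prepare}; and it obtains the Harder--Narasimhan property from Lemma~\ref{lem:noether} together with the discreteness of the imaginary part of $Z_{f^{\ast}\omega+D}$ (rationality of $\omega, D$), exactly as you do.

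However, you have over-read the statement. Lemma~\ref{lem:Adag} asserts only that $\sigma_{f^{\ast}\omega+D}$ is a stability condition in the sense of Definition~\ref{lem:pair2}; the support property is \emph{not} part of this lemma. It is established separately as Proposition~\ref{prop:Adag}, and the paper's argument there is rather different from what you sketch in~(c). Rather than attempting a Bogomolov--Gieseker inequality for semistable objects in $\aA_{\omega}(X/Y)$ directly via wall-crossing in $s$, the paper exploits the torsion-pair decomposition $0 \to F \to E \to \dL f^{\ast}M \to 0$ of a semistable $E$ and argues by cases. When $E \in \cC_{X/Y}^{0}$, the bound is immediate from the finitely many generators $S_i$ and an elementary sector inequality (there exists $\theta\in(0,1]$ with $Z_D(\cC_{X/Y}^{0}\setminus\{0\}) \subset \mathbb{H}_{\theta}$, and for $z_1,\ldots,z_k \in \mathbb{H}_{\theta}$ one has $\lvert \sum z_i \rvert \ge K(\theta) \sum \lvert z_i \rvert$). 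When $E = \dL f^{\ast}M$, Lemma~\ref{rel:pull} reduces to the support property of $\sigma_{\omega,D}$ on $Y$. In the mixed case, the key point is that semistability of $E$ forces every $Z_{\omega,D}$-Harder--Narasimhan factor of $M$ to have phase at least $\theta$, so the same sector inequality controls $\lVert E\rVert / \lvert Z_{f^{\ast}\omega+D}(E)\rvert$ in terms of the bounds from the two pure cases. This bypasses any need to establish a BG-type inequality on $X$; your proposed route, by contrast, leaves unspecified how the wall-crossing argument of~\cite[Theorem~3.23]{Todext} would be adapted to the glued heart $\aA_{\omega}(X/Y)$, and the phrase ``severely restrict the possible Chern characters'' is not yet a proof.
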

\begin{proof}
We first check that $\sigma_{f^{\ast}\omega +D}$
satisfies the property (\ref{pro:1}). 
For non-zero $F \in \cC_{X/Y}^{0} $ and $M \in \aA_{\omega}$, we have 
the equality (\ref{ZodM}) and 
\begin{align}\label{im:pos}
\Imm Z_{f^{\ast}\omega +D}(F) &=c_1(F) \cdot D >0.
\end{align}
Combined with Lemma~\ref{lem:prepare} and the fact 
that $\aA_{\omega}(X/Y)$ is the extension closure of 
$\cC_{X/Y}^{0}$ and $\dL f^{\ast}\aA_{\omega}$, 
it follows that $\sigma_{f^{\ast}\omega +D}$ satisfies 
the property (\ref{pro:1}). 

In order to show the Harder-Narasimhan property, 
since $\aA_{\omega}(X/Y)$ is noetherian by Lemma~\ref{lem:noether}, 
it is enough to show that
there is no
infinite sequence
\begin{align}\label{inf:seq}
E=E_1 \supset E_2 \supset 
\cdots \supset E_i \supset E_{i+1} \supset \cdots 
\end{align}
in $\aA_{\omega}(X/Y)$
such that 
\begin{align}\label{z:ineq}
\arg Z_{f^{\ast}\omega +D}(E_{i+1}) >\arg Z_{f^{\ast}\omega +D}(E_i/E_{i+1})
\end{align}
for all $i$ (cf.~\cite[Proposition~2.4]{Brs1}).
Suppose that a sequence (\ref{inf:seq})
satisfying (\ref{z:ineq}) exists. 
Since $\Imm Z_{f^{\ast}\omega +D}(\ast)$ is discrete
by the rationality of $\omega$ and $D$, 
we may assume that $\Imm Z_{f^{\ast}\omega +D}(E_i)$
is constant, hence $\Imm Z_{f^{\ast}\omega +D}(E_i/E_{i+1})=0$. 
This implies that $\arg Z_{f^{\ast}\omega +D}(E_i/E_{i+1})=\pi$, 
which contradicts to (\ref{z:ineq}). 
\end{proof}

We also have the following lemma: 
\begin{lem}\label{rel:pull}
An object $M\in \aA_{\omega}$ is 
$Z_{\omega, D}$-(semi)stable if and only if 
$\dL f^{\ast}M \in \aA_{\omega}(X/Y)$ is $Z_{f^{\ast}\omega +D}$-(semi)stable. 
\end{lem}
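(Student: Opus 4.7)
The strategy is to establish a bijection between subobjects of $M$ in $\aA_{\omega}$ and subobjects of $\dL f^{\ast}M$ in $\aA_{\omega}(X/Y)$, both given by the functor $\dL f^{\ast}$, and then to transfer the stability inequality via the identity (\ref{ZodM}). The content of the lemma really reduces to two structural observations about $\dL f^{\ast}$: that it restricts to an exact fully faithful embedding $\aA_{\omega} \hookrightarrow \aA_{\omega}(X/Y)$, and that all subobjects (equivalently, all quotients) of an object of the form $\dL f^{\ast}M$ inside $\aA_{\omega}(X/Y)$ already lie in its essential image.

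First, I would verify that $\dL f^{\ast}$ sends short exact sequences in $\aA_{\omega}$ to short exact sequences in $\aA_{\omega}(X/Y)$. By construction, $\dL f^{\ast}\aA_{\omega} \subset \aA_{\omega}(X/Y)$, and since $\dL f^{\ast}$ is triangulated, any exact triangle $N \to M \to P$ arising from a short exact sequence in $\aA_{\omega}$ gives a triangle $\dL f^{\ast}N \to \dL f^{\ast}M \to \dL f^{\ast}P$ with all three vertices in $\aA_{\omega}(X/Y)$; this is the desired short exact sequence. Fully faithfulness of $\dL f^{\ast}$ (equivalently, $\dR f_{\ast} \dL f^{\ast} \cong \id$ from the projection formula for the birational morphism $f$) ensures there is no collapsing.

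Next, given a subobject inclusion $E \hookrightarrow \dL f^{\ast}M$ in $\aA_{\omega}(X/Y)$ with cokernel $Q$, Lemma~\ref{closed:under} tells me that $\dL f^{\ast}\aA_{\omega}$ is closed under both subobjects and quotients in $\aA_{\omega}(X/Y)$, so I may write $E = \dL f^{\ast}N$ and $Q = \dL f^{\ast}P$. Applying $\dR f_{\ast}$ and invoking $\dR f_{\ast}\dL f^{\ast} \cong \id$ on $\aA_{\omega}$, the triangle descends to a short exact sequence $0 \to N \to M \to P \to 0$ in $\aA_{\omega}$. Together with the previous paragraph, this gives the desired bijection between subobjects of $M$ in $\aA_{\omega}$ and subobjects of $\dL f^{\ast}M$ in $\aA_{\omega}(X/Y)$.

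Finally, under the bijection $N \leftrightarrow \dL f^{\ast}N$, the identity (\ref{ZodM}) gives
\[
Z_{f^{\ast}\omega+D}(\dL f^{\ast}N) = Z_{\omega,D}(N), \qquad
Z_{f^{\ast}\omega+D}(\dL f^{\ast}M) = Z_{\omega,D}(M),
\]
so the condition $\arg Z_{\omega,D}(N) < (\le) \arg Z_{\omega,D}(M)$ for every proper nonzero subobject $N\subsetneq M$ in $\aA_{\omega}$ is equivalent to the corresponding condition for every proper nonzero subobject $E \subsetneq \dL f^{\ast}M$ in $\aA_{\omega}(X/Y)$. This is precisely the claim. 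I do not anticipate a serious obstacle here beyond carefully applying Lemma~\ref{closed:under}; the only step requiring a little care is checking exactness of $\dL f^{\ast}$ on the hearts, but since the image is known a priori to land in $\aA_{\omega}(X/Y)$, this follows formally from the triangulated nature of $\dL f^{\ast}$.
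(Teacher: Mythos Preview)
Your proposal is correct and follows essentially the same route as the paper: the paper's proof is the single sentence ``Since the equality (\ref{ZodM}) holds, the lemma obviously follows from Lemma~\ref{closed:under},'' and what you have written is precisely the unpacking of that sentence---using Lemma~\ref{closed:under} to identify subobjects of $\dL f^{\ast}M$ with pullbacks of subobjects of $M$, and then transferring the slope inequality via (\ref{ZodM}). The only extra ingredients you make explicit (exactness of $\dL f^{\ast}$ between the hearts and the use of $\dR f_{\ast}\dL f^{\ast}\cong \id$ to descend) are standard and are exactly what the paper is suppressing.
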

\begin{proof}
Since the equality (\ref{ZodM}) holds, 
the lemma obviously follows from
 Lemma~\ref{closed:under}. 
\end{proof}

The final step is to show the support property
for the pair (\ref{pair:omegaD}). We have the 
following proposition: 
\begin{prop}\label{prop:Adag}
In the above situation, we have 
\begin{align*}
\sigma_{f^{\ast}\omega +D} \in \Stab(X)_{\mathbb{R}}
\end{align*}
i.e. $\sigma_{f^{\ast}\omega +D}$ satisfies the support property. 
\end{prop}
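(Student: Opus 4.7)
The strategy is to imitate the proof of Lemma~\ref{lem:prepare} in this richer setting: establish a Bogomolov--Gieseker-type inequality for $Z_{f^{\ast}\omega+D}$-semistable objects in $\aA_{\omega}(X/Y)$ by a wall-crossing argument for the one-parameter family $\{\sigma_{f^{\ast}(s\omega)+D}\}_{s\ge 1}$, and then extract the support property from this inequality by the same computation as in~\cite[Lemma~3.12]{Todext}, \cite[Proposition~3.13]{Todext}.

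For each $s\ge 1$ one has $D\in C_{f,s^2\omega^2}(X)$, so Lemma~\ref{lem:Adag} provides a stability condition $(Z_{f^{\ast}(s\omega)+D},\aA_{s\omega}(X/Y))$ on $D^b\Coh(X)$. The plan is to propagate a BG inequality from the limit $s\to\infty$ down to $s=1$. A potential destabilizing sequence $0\to E_1\to E\to E_2\to 0$ decomposes, via the torsion pair $(\cC_{X/Y}^{0},\dL f^{\ast}\aA_{s\omega})$ of Lemma~\ref{lem:ACA}, into a $\cC_{X/Y}^{0}$-piece and a pullback piece. The torsion part $F$ has class $\sum n_i[S_i]$ with $n_i\in\mathbb{Z}_{\ge 0}$ by Proposition~\ref{prop:Ctst}, and the defining inequalities of $C_{f,s^2\omega^2}(X)$ force $\Imm Z_{f^{\ast}(s\omega)+D}(F)=c_1(F)\cdot D\ge c_0\sum n_i$ for a uniform $c_0>0$; the pullback piece is controlled by $\sigma_{s\omega,D}$ on $Y$, which satisfies both BG and the support property by Lemma~\ref{lem:prepare}, and whose central charge matches its pullback's via~\eqref{ZodM} and Lemma~\ref{rel:pull}. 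These two inputs bound the numerical classes of destabilizing sub/quotients in any bounded region, leaving only finitely many walls in $s\in[1,\infty)$, and tracking the BG inequality across them reproduces the inductive scheme of~\cite[Theorem~3.23]{Todext}.

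Once the BG-type inequality $\ch_1(E)^2\ge 2\ch_0(E)\ch_2(E)$ has been established for $Z_{f^{\ast}\omega+D}$-semistable $E$, the support property follows by a routine computation: writing
\[
Z_{f^{\ast}\omega+D}(E)=-\ch_2(E)+\tfrac{(f^{\ast}\omega+D)^2}{2}\ch_0(E)+i\,\ch_1(E)\cdot(f^{\ast}\omega+D),
\]
and combining BG with the torsion-side estimate $\lVert F\rVert\le (C_1/c_0)\,\Imm Z_{f^{\ast}\omega+D}(F)$ for the torsion part $F$ of $E$ (using $\Imm Z_{f^{\ast}\omega+D}(E)\ge\Imm Z_{f^{\ast}\omega+D}(F)$) yields $\lVert E\rVert<K|Z_{f^{\ast}\omega+D}(E)|$ uniformly in semistable $E$, exactly as in~\cite[Lemma~3.12]{Todext}.

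The principal obstacle is the base case of the induction: the BG inequality in the limit $s\to\infty$. On $Y$ this large-volume limit reduces to classical Gieseker stability of sheaves, for which BG is Bogomolov's theorem; on $X$, however, the heart $\aA_{s\omega}(X/Y)$ contains the non-sheaf pieces of $\cC_{X/Y}^{0}$, and one must first show that for $s\gg 0$ the torsion part $F\in\cC_{X/Y}^{0}$ of any $Z_{f^{\ast}(s\omega)+D}$-semistable $E$ must vanish. The heuristic is that for large $s$ the pullback objects attain phases approaching $1$ while the phases achievable by objects in $\cC_{X/Y}^{0}$ are $s$-independent and bounded away from $1$, so a nontrivial $F$ would destabilize $E$. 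Once this reduction is in place, the large-volume BG inequality on $X$ transfers from the classical one on $Y$ via the identity $\ch_{\bullet}(\dL f^{\ast}M)=f^{\ast}\ch_{\bullet}(M)$ and the birational invariance of intersection numbers on surfaces.
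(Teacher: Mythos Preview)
Your route diverges substantially from the paper's, and in a way that creates unnecessary difficulties. The paper does \emph{not} establish a Bogomolov--Gieseker inequality on $X$ at all. Instead it argues directly from the torsion-pair decomposition $0\to F\to E\to \dL f^{\ast}M\to 0$ of a semistable $E$, with $F\in\cC_{X/Y}^{0}$ and $M\in\aA_{\omega}$. Since $\cC_{X/Y}^{0}$ is the extension closure of finitely many objects, there is a fixed $\theta\in(0,1]$ with $Z_D(\cC_{X/Y}^{0}\setminus\{0\})\subset\mathbb{H}_{\theta}$, and an elementary bound $|z_1+\cdots+z_k|\ge K(\theta)(|z_1|+\cdots+|z_k|)$ for $z_i\in\mathbb{H}_{\theta}$ handles $F$. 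The idea you are missing is how to treat $M$ when it is \emph{not} $Z_{\omega,D}$-semistable: the paper uses the semistability of $E$ to show that every Harder--Narasimhan factor $M_i$ of $M$ for $Z_{\omega,D}$ also has $Z_{\omega,D}(M_i)\in\mathbb{H}_{\theta}$; then the support property on $Y$ (already proved as Lemma~\ref{lem:prepare}) bounds each $\lVert M_i\rVert/|Z_{\omega,D}(M_i)|$, and the same elementary inequality sums $F$ and the $M_i$ together. No wall-crossing on $X$, no induction on $s$, no BG on $X$.

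Your proposal has genuine gaps beyond the base-case obstacle you acknowledge. First, your base-case claim as stated is false: simple objects of $\cC_{X/Y}^{0}$ are $Z_{f^{\ast}(s\omega)+D}$-stable for every $s$ (their central charge is independent of $s$ and $\cC_{X/Y}^{0}$ is closed under subobjects and quotients by Lemma~\ref{closed:under}), so the torsion part of a semistable object need not vanish for $s\gg 0$. Second, even restricting to $\ch_0(E)\neq 0$, the inductive step---that BG propagates across finitely many walls---is only asserted: the argument of~\cite[Theorem~3.23]{Todext} uses the Hodge index theorem against an ample or nef class, while $f^{\ast}\omega+D$ is typically neither (the components of $D$ have negative self-intersection and pair trivially with $f^{\ast}\omega$), so the mixed terms between $\cC_{X/Y}^{0}$-classes and pullback classes are not controlled by that mechanism. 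Third, your final ``exactly as in~\cite[Lemma~3.12]{Todext}'' step is not immediate for the same reason: that computation converts BG into a norm bound using ampleness, which is unavailable for $f^{\ast}\omega+D$ on $X$. The paper avoids all three issues by keeping the BG/support input entirely on $Y$, where $\omega$ is genuinely ample, and combining with the $\cC_{X/Y}^{0}$ side only through the elementary angle inequality.
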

\begin{proof}
We first note that, for any $F\in \cC_{X/Y}^{0}$, we have
\begin{align*}
Z_{f^{\ast}\omega +D}(F)= Z_{D}(F) 
\end{align*}
and its imaginary part is positive by (\ref{im:pos}).
Since $\cC_{X/Y}^{0}$ is 
the extension closure of a finite number of objects, 
there is $0<\theta \le 1$ so that 
\begin{align}\label{C:theta}
Z_{D}(\cC_{X/Y}^{0} \setminus \{0\})
\subset 
\mathbb{H}_{\theta}
\end{align}
where $\mathbb{H}_{\theta}$ is defined by 
\begin{align*}
\mathbb{H}_{\theta} 
\cneq 
\{ r \exp(i\pi \phi) : r>0, \phi \in [\theta, 1] \}. 
\end{align*}
We can find a constant $K(\theta)>0$, which 
only depends on $\theta$, satisfying 
the following: 
for any $k\ge 1$ and $z_1, \cdots, z_k \in \mathbb{H}_{\theta}$, we have
\begin{align}\label{lem:sin}
\frac{\lvert z_1 + \cdots + z_k \rvert}{\lvert z_1 \rvert + \cdots + 
\lvert z_k \rvert} \ge K(\theta). 
\end{align}
For instance, one can take $K(\theta)=\sin^2 \pi \theta/2$. 
The proof of this fact is an easy exercise, and we 
omit the proof. 

Let us take a $Z_{f^{\ast}\omega +D}$-semistable
 object $E\in \aA_{\omega}(X/Y)$. 
We 
have the exact sequence in $\aA_{\omega}(X/Y)$
\begin{align}\label{supp:FEM}
0 \to F \to E \to \dL f^{\ast} M \to 0
\end{align}
for $F \in \cC_{X/Y}^{0}$ and $M \in \aA_{\omega}$.
We find a constant $K$ as in Definition~\ref{def:sprop}
by dividing into the three cases: 
\begin{ccase}
$M=0$, i.e. $E \in \cC_{X/Y}^{0}$.  
\end{ccase}
In this case, 
let us take $K'>0$ so that the following holds: 
\begin{align*}
\frac{\lVert S_i \rVert}{\lvert Z_{D}(S_i) \rvert} <K'
\end{align*}
for all $1\le i \le N$. Here $S_1, \cdots, S_N$
are the objects in $\cC_{X/Y}^{0}$ as in Proposition~\ref{prop:gen}. 
Then by (\ref{lem:sin}), it follows that  
\begin{align*}
\frac{\lVert E \rVert}{\lvert Z_{f^{\ast}\omega +D}(E) \rvert}
< \frac{K'}{K(\theta)} (=K). 
\end{align*}
Note that the $Z_{f^{\ast}\omega +D}$-stability of $E$
is not needed in the above argument. 
\begin{ccase}
$F=0$, i.e. $E \cong \dL f^{\ast} M$. 
\end{ccase}
In this case, 
the object $M$ is $Z_{\omega, D}$-semistable 
by Lemma~\ref{rel:pull}. 
By Lemma~\ref{lem:prepare}, 
the pair $(Z_{\omega, D}, \aA_{\omega})$ satisfies the 
support property. Therefore we can find $K>0$, 
which is independent of $M$, so that 
\begin{align*}
\frac{\lVert E \rVert}{\rvert Z_{f^{\ast}\omega +D}(E) \rvert}
=\frac{\lVert M \rVert}{\lvert Z_{\omega, D} (M) \rvert} < K.
\end{align*}
\begin{ccase}
$F\neq 0$ and $M\neq 0$. 
\end{ccase}
In this case, 
note that the object $M$ may not be
$Z_{\omega, D}$-semistable. 
So there may be an exact sequence in $\aA_{\omega}$
\begin{align*}
0\to M'' \to M \to M' \to 0
\end{align*}
satisfying
\begin{align}\label{theta2}
\arg Z_{\omega, D}(M'') > \arg Z_{\omega, D}(M) > \arg Z_{\omega, D}(M'). 
\end{align}
We have the surjections in $\aA_{\omega}(X/Y)$
\begin{align*}
E \twoheadrightarrow \dL f^{\ast} M \twoheadrightarrow 
\dL f^{\ast} M'
\end{align*}
such that the kernel of their composition 
has the numerical class $[F] + [\dL f^{\ast}M'']$. 
By the $Z_{f^{\ast}\omega +D}$-stability of $E$, we have 
\begin{align}\notag
\arg (Z_{D}(F) +  Z_{\omega, D}(M'') )&= 
\label{theta3}
\arg Z_{f_{\ast}\omega +D}(F \oplus \dL f^{\ast}M'') \\
&\le \arg Z_{\omega, D}(M'). 
\end{align}
On the other hand, by 
(\ref{C:theta}), 
the exact sequence (\ref{supp:FEM}) and the 
$Z_{f^{\ast}\omega +D}$-stability of $E$, 
we have the inequalities
\begin{align}\label{theta1}
\pi \theta \le \arg Z_{D}(F)
\le \arg Z_{\omega, D}(M). 
\end{align}
The inequalities (\ref{theta2}), (\ref{theta3}) and (\ref{theta1})
imply that $\arg Z_{\omega, D}(M') \ge \pi \theta$. 
Let us take the $Z_{\omega, D}$-semistable factors of $M$, 
\begin{align*}
M_1, \cdots, M_k \in \aA_{\omega}. 
\end{align*}
Then the above argument implies that
$Z_{\omega, D}(M_i) \in \mathbb{H}_{\theta}$
for all $1\le i\le k$. 
Let $K>0$ be a constant which we took 
 in the previous cases. Then we have
\begin{align*}
\frac{\lVert E \rVert}{\lvert Z_{f^{\ast}\omega +D}(E) \rvert}
&\le \frac{1}{K(\theta)} \cdot
\frac{\lVert F \rVert + \sum_{i=1}^{k}
\lVert M_i \rVert}{\lvert Z_{D}(F) \rvert + \sum_{i=1}^{k}
\lvert Z_{D, \omega}(M_i)\rvert} \\
&\le \frac{K}{K(\theta)}
\end{align*}
by
(\ref{lem:sin}) and the results in
the previous steps. Therefore 
$\sigma_{f^{\ast}\omega +D}$ satisfies the 
support property. 
\end{proof}

For an irrational $f^{\ast}\omega +D$, we have the following 
analogue of Proposition~\ref{prop:lift}:
\begin{prop}\label{prop:liftY}
The embedding $\overline{A}^{\dag}(Y) \subset \mathrm{NS}(X)_{\mathbb{R}}$
lifts to a 
continuous map 
\begin{align}\label{cont:liftY}
\sigma_Y \colon \overline{A}^{\dag}(Y) \to \Stab(X)_{\mathbb{R}}
\end{align}
which takes any rational point 
$f^{\ast}\omega + D$ in $\overline{A}^{\dag}(Y)$
to the stability condition $\sigma_{f^{\ast}\omega +D}$
in Proposition~\ref{prop:Adag}. 
\end{prop}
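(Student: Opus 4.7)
The plan is to construct $\sigma_Y$ by gluing local continuous sections of $\Pi_{\mathbb{R}}$ obtained from Theorem~\ref{thm:Brmain} at rational points, and then invoking density of rationals together with connectedness of $\overline{A}^{\dag}(Y)$ (Lemma~\ref{lem:Cf}) to check that these local sections are globally compatible.

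First I would verify that rational points are dense in $\overline{A}^{\dag}(Y)$. This follows from the explicit description of $\overline{A}^{\dag}(Y)$ as a bundle over $\overline{A}(Y)$ with fibers of the form recorded in the proof of Lemma~\ref{lem:Cf}, combined with density of rational ample classes in $\overline{A}(Y)$ (already exploited in Proposition~\ref{prop:lift}).

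Next, fix a rational $\alpha_0 = f^{\ast}\omega_0 + D_0 \in \overline{A}^{\dag}(Y)$. By Proposition~\ref{prop:Adag}, the corresponding $\sigma_0 = \sigma_{\alpha_0}$ lies in $\Stab(X)_{\mathbb{R}}$ and satisfies the support property. Theorem~\ref{thm:Brmain}, together with the Cartesian square defining $\Stab(X)_{\mathbb{R}}$, then produces an open neighborhood $V_0 \subset \Stab(X)_{\mathbb{R}}$ of $\sigma_0$ on which $\Pi_{\mathbb{R}}$ is a homeomorphism onto an open neighborhood $W_0 \subset \overline{A}^{\dag}(Y)$ of $\alpha_0$. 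Let $\tau_0 \colon W_0 \to V_0$ denote the resulting inverse section.

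The main obstacle is to show that $\tau_0(\alpha_1) = \sigma_{\alpha_1}$ for every rational $\alpha_1 \in W_0$; once this is established, continuity of $\tau_0$ and density of rationals determine $\tau_0$ uniquely on $W_0$, so for any pair of rational base points the local sections agree on the overlap of their domains, and connectedness of $\overline{A}^{\dag}(Y)$ then yields a well-defined continuous $\sigma_Y$ with the desired property. To establish the compatibility, I would mimic the strategy of Proposition~\ref{prop:lift}: after shrinking $W_0$, the deformation statement underlying Theorem~\ref{thm:Brmain} guarantees that no walls are crossed, so every $\sigma_0$-stable object remains stable in $\tau_0(\alpha_1)$. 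In particular the generators $S_i$ of $\cC_{X/Y}^{0}$ from Proposition~\ref{prop:gen} and the pullbacks $\dL f^{\ast} E$ of $\sigma_{\omega_0, D_0}$-stable objects on $Y$ remain stable, which forces the heart of $\tau_0(\alpha_1)$ to coincide with $\aA_{\omega_1}(X/Y)$; the central charges match by construction, and hence the two stability conditions agree.
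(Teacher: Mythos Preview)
Your overall architecture matches the paper's: construct local sections near rational points via the local homeomorphism, show they agree with the explicit $\sigma_{f^{\ast}\omega'+D'}$ at nearby rational points, then extend. However, the heart of the matter is precisely the compatibility step, and your one-sentence treatment of it has a genuine gap.

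The difficulty is that Lemma~\ref{lem:Brlem}, which drove the analogous step in Proposition~\ref{prop:lift}, is not available here; the paper says this explicitly at the start of the proof. Your replacement argument---that the generators $S_i$ and the pullbacks $\dL f^{\ast}E$ of $\sigma_{\omega_0,D_0}$-stable objects remain stable, and that this ``forces the heart of $\tau_0(\alpha_1)$ to coincide with $\aA_{\omega_1}(X/Y)$''---does not work as stated. First, knowing that a collection of objects is stable with phase in $(0,1]$ does not by itself pin down a heart; there is no analogue of Lemma~\ref{lem:Brlem} for the glued heart $\aA_{\omega}(X/Y)$. Second, and more seriously, the target heart $\aA_{\omega_1}(X/Y)$ depends on $\omega_1$, not on $\omega_0$: to show $\dL f^{\ast}\aA_{\omega_1}\subset \pP'((0,1])$ you must control pullbacks of $\sigma_{\omega_1,D_1}$-stable objects on $Y$, not $\sigma_{\omega_0,D_0}$-stable ones. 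The paper's proof (Steps~2--4) handles this by working with slicings: it compares the slicings $\qQ,\qQ'$ on $Y$ for $\sigma_{\omega,D}$ and $\sigma_{\omega',D'}$, uses Lemma~\ref{rel:pull} to transport this comparison to $X$ via $\dL f^{\ast}$, and then invokes Lemma~\ref{closed:under} to show that strict exact sequences in the relevant quasi-abelian categories $\pP((\phi-\epsilon,\phi+\epsilon))$ respect the image of $\dL f^{\ast}$. This is the substantive content you are missing.

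There is also a secondary gap in the extension to irrational points. Gluing local sections around rational points only defines $\sigma_Y$ on an open dense subset of $\overline{A}^{\dag}(Y)$; density and connectedness alone do not guarantee that this open set is everything. The paper instead shows (Steps~5--7) that the support-property constants for $\sigma_{f^{\ast}\omega_j+D_j}$ can be bounded uniformly as $f^{\ast}\omega_j+D_j\to f^{\ast}\omega+D$, which forces the limit in $\Stab(X)_{\mathbb{R}}$ to exist, and then checks well-definedness by a further slicing comparison.
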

\begin{proof}
The proof will be given in Subsection~\ref{subsec:techY}. 
\end{proof}
We define $U(Y)$ to 
be
\begin{align*}
U(Y) \cneq \sigma_Y(A^{\dag}(Y)) \subset \Stab(X)_{\mathbb{R}}. 
\end{align*}
Note that $U(Y)$ is a connected open subset of 
$\Stab(X)_{\mathbb{R}}$, which 
is homeomorphic to $A^{\dag}(Y)$ under the 
forgetting map 
$\Stab(X)_{\mathbb{R}} \to \mathrm{NS}(X)_{\mathbb{R}}$.

\begin{rmk}
In the situation of Example~\ref{exam:P2}, it is easy to see that
\begin{align*}
&A^{\dag}(X)=\{ x[H]+y[C] : x>0, -x<y<0\} \\
&A^{\dag}(\mathbb{P}^2)
=\{x[H] +y[C] ; x>0, 0<y<x \}. 
\end{align*}
Therefore we obtain the description in 
Example~\ref{exam:P2}. 
\end{rmk}

\subsection{Relations of $U(Y)$ under blow-downs}\label{subsec:relation}
In the situation of the previous subsections, 
suppose that $f$ factors as 
\begin{align*}
f \colon X \stackrel{g}{\to} Y' \stackrel{h}{\to} Y
\end{align*}
where $h$ 
contracts a single $(-1)$-curve 
$C$ on $Y'$ to a point in $Y$. 
The purpose of this subsection is 
to prove that $\overline{U}(Y) \cap \overline{U}(Y')$
is non-empty of real codimension one.
 
We first see the relationship between 
the hearts of bounded t-structures (\ref{tAXY})
under blow-downs. 
Let us take a rational point
$\omega \in A(Y)$, 
and consider $h^{\ast}\omega \in \overline{A}(Y')$. 
\begin{lem}
There is a torsion pair on $\aA_{h^{\ast}\omega}(X/Y')$
of the form
\begin{align}\label{t:pair}
(\langle \oO_{\widehat{C}} \rangle, \oO_{\widehat{C}}^{\aA, \perp} )
\end{align}
where 
$\widehat{C}=g^{\ast}C$ and 
$\oO_{\widehat{C}}^{\aA, \perp}$ is 
the right orthogonal complement of $\oO_{\widehat{C}}$
in $\aA_{h^{\ast}\omega}(X/Y')$.
\end{lem}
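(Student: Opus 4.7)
The plan is to establish $\langle \oO_{\widehat{C}} \rangle$ as a torsion class inside the noetherian abelian category $\aA_{h^{\ast}\omega}(X/Y')$; once this is done, the torsion-free class is automatically its right orthogonal $\oO_{\widehat{C}}^{\aA,\perp}$ and the torsion pair~(\ref{t:pair}) follows from the usual machinery in noetherian abelian categories. The entire argument mirrors that of Lemma~\ref{lem:additional}, but one level up in the gluing: instead of working inside the auxiliary heart $\widetilde{\cC}_{X/Y}^{0}$, one now works inside $\aA_{h^{\ast}\omega}(X/Y')$.

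First I would check that $\oO_{\widehat{C}} \in \aA_{h^{\ast}\omega}(X/Y')$. Since $\widehat{C} = g^{\ast}C$ is a Cartier total transform and $\oO_{Y'}(-C)$ is flat, the Koszul resolution pulls back to give $\dL g^{\ast}\oO_C = \oO_{\widehat{C}}$; so it suffices to show $\oO_C \in \aA_{h^{\ast}\omega}$, whence $\oO_{\widehat{C}} \in \dL g^{\ast}\aA_{h^{\ast}\omega} \subset \aA_{h^{\ast}\omega}(X/Y')$ by the construction of the larger heart via gluing. To see $\oO_C \in \aA_{h^{\ast}\omega}$, first verify $\oO_C \in \PPer(Y'/Y)$ directly from the gluing description: $\dR h_{\ast}\oO_C \cong \oO_p$ sits in $\Coh(Y)$ in the correct cohomological degree, while the required $\Hom$-vanishings reduce to $\Hom_{D^b}(\oO_C(-1)[k],\oO_C) = 0$ for the relevant $k$, which follows from $H^{\bullet}(C,\oO_C(1))$ being concentrated in degree zero. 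Because $\ch_0(\oO_C) = 0$ one has $\mu_{h^{\ast}\omega}(\oO_C) = +\infty$, so $\oO_C \in \tT_{h^{\ast}\omega}$ and hence lies in the tilt $\aA_{h^{\ast}\omega}$.

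Next I would verify that $\langle \oO_{\widehat{C}} \rangle$ is closed under subobjects and quotients. For subobjects, Lemma~\ref{closed:under} gives that $\dL g^{\ast}\aA_{h^{\ast}\omega}$ is closed under subobjects in $\aA_{h^{\ast}\omega}(X/Y')$, and full faithfulness of $\dL g^{\ast}$ means any subobject of an object of $\langle \oO_{\widehat{C}} \rangle$ is the $\dL g^{\ast}$-image of a subobject of the corresponding object of $\langle \oO_C \rangle \subset \aA_{h^{\ast}\omega}$. The $(-1)$-curve computation $\Ext^{\ge 1}_{Y'}(\oO_C,\oO_C) = 0$ transports directly to any heart (Ext-groups in a heart equal ambient derived Homs), so every object of $\langle \oO_C \rangle \subset \aA_{h^{\ast}\omega}$ is a direct sum $\oO_C^{\oplus n}$; combined with the simplicity-type property that subobjects of $\oO_C$ in $\aA_{h^{\ast}\omega}$ (being rank-zero and of slope $+\infty$ in $\tT_{h^{\ast}\omega}$) are classified as $0$ or $\oO_C$, a subobject of $\oO_C^{\oplus n}$ is a direct summand $\oO_C^{\oplus \ell}$. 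Closure under quotients then runs exactly as in Lemma~\ref{lem:additional}: given $0 \to F \to \oO_{\widehat{C}}^{\oplus m} \to G \to 0$, subobject-closure forces $F \cong \oO_{\widehat{C}}^{\oplus \ell}$, the self-Ext vanishing makes this inclusion split, and $G \cong \oO_{\widehat{C}}^{\oplus (m-\ell)}$.

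Finally, by Lemma~\ref{lem:noether} the category $\aA_{h^{\ast}\omega}(X/Y')$ is noetherian, and in a noetherian abelian category any subcategory closed under extensions and quotients is automatically the torsion part of a unique torsion pair whose torsion-free class is its right orthogonal: for any $E$, take the maximal $\langle \oO_{\widehat{C}} \rangle$-subobject of $E$ — which exists by noetherianity together with the closure properties — and its quotient lies in $\oO_{\widehat{C}}^{\aA,\perp}$ by maximality. In my view the main obstacle is the simplicity-type claim for $\oO_C$ in the tilted heart $\aA_{h^{\ast}\omega}$, together with the verification that $\oO_C \in \PPer(Y'/Y)$; once these two points are secured, the rest of the argument is a direct adaptation of Lemma~\ref{lem:additional}.
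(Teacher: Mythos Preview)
Your proposal is correct and follows essentially the same approach as the paper: verify $\oO_{\widehat{C}}\in\aA_{h^{\ast}\omega}(X/Y')$ via $\oO_C\in\PPer(Y'/Y)$, invoke noetherianity (Lemma~\ref{lem:noether}), and reduce the closure-under-quotients check to the level of $Y'$ via Lemma~\ref{closed:under} together with the simplicity of $\oO_C$ in the tilted heart. The paper is slightly more direct in that it uses both halves of Lemma~\ref{closed:under} to conclude at once that $E_1,E_2\in\dL g^{\ast}\aA_{h^{\ast}\omega}$ and then cites \cite[Proposition~3.5.8]{MVB} for the simplicity of $\oO_C$ in $\PPer(Y'/Y)$, whereas you route through the $\Ext^1$-vanishing and a splitting argument as in Lemma~\ref{lem:additional}; but this is a cosmetic difference. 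One caution on your ``simplicity-type'' sketch: a subobject $M\hookrightarrow\oO_C$ in the tilt is not \emph{a priori} rank zero---the clean way to see simplicity is that if $M\neq 0$ then (since $\oO_C$ is simple in $\PPer(Y'/Y)$) either $M=0$ or $M$ sits in $0\to F\to M\to\oO_C\to 0$ in $\PPer(Y'/Y)$ with $F\in\fF_{h^{\ast}\omega}$, and then comparing $\mu(M)=\mu(F)$ (same rank and same $c_1\cdot h^{\ast}\omega$) with $\mu_{\min}(M)>0\ge\mu_{\max}(F)$ forces $F=0$.
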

\begin{proof}
Since $\oO_C \in \PPer(Y'/Y)$, 
we have $\oO_{\widehat{C}} \in \aA_{h^{\ast}\omega}(X/Y')$. 
Also the 
abelian category $\aA_{h^{\ast}\omega}(X/Y')$
is noetherian by Lemma~\ref{lem:noether}, so 
it is enough to check that $\langle \oO_{\widehat{C}} \rangle$
is closed under quotients in $\aA_{h^{\ast}\omega}(X/Y')$
 (cf.~\cite[Lemma~2.15 (i)]{Tcurve2}).
Let us take an exact sequence in $\aA_{h^{\ast}\omega}(X/Y')$
\begin{align*}
0 \to E_1 \to \oO_{\widehat{C}}^{\oplus m} \to E_2 \to 0
\end{align*}
for $m\in \mathbb{Z}_{\ge 1}$. 
By Lemma~\ref{closed:under}, $E_i$ is of the form $\dL g^{\ast}M_i$
for some $M_i \in \aA_{h^{\ast}\omega}$, and 
we have the exact sequence in $\aA_{h^{\ast}\omega}$
\begin{align*}
0 \to M_1 \to \oO_{C}^{\oplus m} \to M_2 \to 0. 
\end{align*}
Since $\oO_C$ is a simple object in $\PPer(Y'/Y)$
(cf.~\cite[Proposition~3.5.8]{MVB}),
it 
easily follows that $\oO_C$ is also a simple object in 
$\aA_{h^{\ast}\omega}$. 
Hence $M_i \in \langle \oO_C \rangle$ and
$E_i \in \langle \oO_{\widehat{C}} \rangle$
follows. This implies that $\langle \oO_{\widehat{C}} \rangle$
is closed under quotients. 
\end{proof}

The abelian categories 
$\aA_{\omega}(X/Y)$ and $\aA_{h^{\ast}\omega}(X/Y')$
are related as follows:
\begin{lem}\label{final:tilting}
In the above situation, we have 
\begin{align*}
\aA_{\omega}(X/Y)= \langle \oO_{\widehat{C}}^{\aA, \perp}, 
\oO_{\widehat{C}}[-1] \rangle
\end{align*}
i.e. $\aA_{\omega}(X/Y)$ is 
the tilting with respect to the torsion pair (\ref{t:pair}).  
\end{lem}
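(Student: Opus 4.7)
The plan is to identify $\aA_{\omega}(X/Y)$ with the tilted heart $\aA'\cneq \langle \oO_{\widehat{C}}^{\aA,\perp},\oO_{\widehat{C}}[-1]\rangle$. The latter is the heart of a bounded $t$-structure by the general Happel-Reiten-Smalo construction applied to the torsion pair exhibited in the preceding lemma, while $\aA_{\omega}(X/Y)$ is a heart by Lemma~\ref{lem:ACA}. I will rely on the elementary observation that if $\aA_1\subseteq \aA_2$ are two hearts of bounded $t$-structures on the same triangulated category, then $\aA_1=\aA_2$: a nontrivial $\aA_1$-cohomology of an object $E\in \aA_2$ in some degree $i\neq 0$ would, via the canonical truncation triangle, give a nonzero morphism from $E\in \aA_2^{\le 0}\cap \aA_2^{\ge 0}$ to $\aA_2[-i]$, contradicting semi-orthogonality of the $t$-structure of $\aA_2$ (together with the easy inclusions $\aA_1^{\le 0}\subseteq \aA_2^{\le 0}$ and $\aA_1^{\ge 0}\subseteq \aA_2^{\ge 0}$). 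Thus it suffices to prove $\aA_{\omega}(X/Y)\subseteq \aA'$, and by Lemma~\ref{lem:ACA} this reduces to checking the two generating subcategories $\cC_{X/Y}^{0}$ and $\dL f^{\ast}\aA_{\omega}$.

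For $\dL f^{\ast}\aA_{\omega}$: Lemma~\ref{lem:pstab} applied to $h$ combined with the definition of $\aA_{h^{\ast}\omega}(X/Y')$ gives $\dL f^{\ast}\aA_{\omega}=\dL g^{\ast}\dL h^{\ast}\aA_{\omega}\subseteq \dL g^{\ast}\aA_{h^{\ast}\omega}\subseteq \aA_{h^{\ast}\omega}(X/Y')$. For $N\in \aA_{\omega}$, adjunction combined with $\dR g_{!}\oO_{\widehat{C}}\cong \oO_C$ yields
\begin{align*}
\Hom(\oO_{\widehat{C}},\dL f^{\ast}N)\cong \Hom(\dR h_{!}\oO_C,N),
\end{align*}
and $\dR h_{!}\oO_C\cong \dR h_{\ast}(\oO_C\otimes \omega_{Y'})\otimes \omega_Y^{-1}\cong \dR h_{\ast}\oO_{\mathbb{P}^1}(-1)\otimes \omega_Y^{-1}=0$ since $K_{Y'}\cdot C=-1$ for the $(-1)$-curve $C$. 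Hence $\dL f^{\ast}\aA_{\omega}\subseteq \oO_{\widehat{C}}^{\aA,\perp}\subseteq \aA'$.

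For $\cC_{X/Y}^{0}$: by Proposition~\ref{prop:Ctst} this is generated by $S_N=\oO_{\widehat{C}}[-1]$, which lies in $\aA'$ tautologically, and by $S_1,\ldots,S_{N-1}$ arising as universal extensions $S_i'\to S_i\to \oO_{\widehat{C}}\otimes \Ext^1(\oO_{\widehat{C}},S_i')$ with $S_i'\in \cC_{X/Y'}^{0}$. Both $S_i'$ and $\oO_{\widehat{C}}$ belong to $\aA_{h^{\ast}\omega}(X/Y')$ (the former since $\cC_{X/Y'}^{0}$ is one of the summands in the gluing, the latter via $\oO_{\widehat{C}}=\dL g^{\ast}\oO_C$ with $\oO_C\in \tT_{h^{\ast}\omega}\subseteq \aA_{h^{\ast}\omega}$ because $\ch_0(\oO_C)=0$). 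Applying the cohomology functor of $\aA_{h^{\ast}\omega}(X/Y')$ to the triangle therefore forces $S_i\in \aA_{h^{\ast}\omega}(X/Y')$. Applying $\Hom(\oO_{\widehat{C}},-)$ to the same triangle, the vanishing $\Hom(\oO_{\widehat{C}},S_i')=0$ (since $\dR g_{\ast}S_i'\in \Coh_0(Y')[-1]$, as used in the proof of Lemma~\ref{lem:CXY}) together with the universality of the extension (which makes the connecting map an isomorphism) yields $\Hom(\oO_{\widehat{C}},S_i)=0$. Thus $S_i\in \oO_{\widehat{C}}^{\aA,\perp}\subseteq \aA'$, completing the required inclusion.

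The main conceptual subtlety is the need to reconcile the two right orthogonal complements appearing along the way: $\oO_{\widehat{C}}^{\cC,\perp}$ inside $\widetilde{\cC}_{X/Y}^{0}$, which was used to build $\cC_{X/Y}^{0}$, and $\oO_{\widehat{C}}^{\aA,\perp}$ inside $\aA_{h^{\ast}\omega}(X/Y')$, which appears in the target tilting description. These are defined inside different abelian categories, but each is cut out by the single condition $\Hom_{D^b\Coh(X)}(\oO_{\widehat{C}},-)=0$; once an object is known to lie in both ambient hearts, membership in the two perps is equivalent. This observation is exactly what lets the verification of $S_i\in \oO_{\widehat{C}}^{\aA,\perp}$ piggyback on the universal-extension computation already performed inside $\widetilde{\cC}_{X/Y}^{0}$ in Section~\ref{sec:construction}.
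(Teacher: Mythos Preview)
Your proof is correct and follows essentially the same strategy as the paper: reduce to one inclusion between hearts, then verify the two generating pieces $\dL f^{\ast}\aA_{\omega}$ and $\cC_{X/Y}^{0}$ lie in the tilted heart, using $\dR h_{!}\oO_C=0$ for the former and the relation between the two orthogonal complements $\oO_{\widehat C}^{\cC,\perp}$ and $\oO_{\widehat C}^{\aA,\perp}$ for the latter. The only packaging difference is that the paper handles $\cC_{X/Y}^{0}$ directly via its defining description $\cC_{X/Y}^{0}=\langle \oO_{\widehat C}^{\cC,\perp},\oO_{\widehat C}[-1]\rangle$ and the ambient inclusion $\widetilde{\cC}_{X/Y}^{0}\subset \aA_{h^{\ast}\omega}(X/Y')$, whereas you verify the generators $S_i$ individually; your closing paragraph is precisely the paper's shortcut.
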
 
\begin{proof}
Since both sides are the hearts of bounded t-structures, 
it is enough to show that the LHS is contained in the RHS. 
This is equivalent to the following 
inclusions:
\begin{align}\label{inc:11}
\cC_{X/Y}^{0} &\subset \langle \oO_{\widehat{C}}^{\aA, \perp}, 
\oO_{\widehat{C}}[-1] \rangle \\
\label{inc:12}
\dL f^{\ast} \aA_{\omega} &\subset \langle \oO_{\widehat{C}}^{\aA, \perp}, 
\oO_{\widehat{C}}[-1] \rangle.
\end{align}
We first show the inclusion (\ref{inc:11}). 
By the construction of $\cC_{X/Y}^{0}$ in (\ref{def:CXY}), 
it is enough to show 
\begin{align}\label{OcOc}
\oO_{\widehat{C}}^{\cC, \perp} \subset \oO_{\widehat{C}}^{\aA, \perp}.
\end{align}
Since $\oO_{\widehat{C}} \in \dL g^{\ast} \aA_{h^{\ast}\omega}$, 
we have the following inclusion
\begin{align*}
\langle \cC_{X/Y'}^{0}, \oO_{\widehat{C}} \rangle 
\subset \langle \cC_{X/Y'}^{0}, \dL g^{\ast} \aA_{h^{\ast}\omega} \rangle
\end{align*}
or equivalently 
$\widetilde{\cC}_{X/Y'}^{0} \subset \aA_{h^{\ast}\omega}(X/Y')$. 
The inclusion (\ref{OcOc}) obviously follows from the above 
inclusion. 

Next we show the inclusion (\ref{inc:12}).
By Lemma~\ref{lem:pstab}, we have the inclusions
\begin{align*}
\dL f^{\ast} \aA_{\omega} \subset
\dL g^{\ast} \aA_{h^{\ast}\omega} \subset \aA_{h^{\ast}\omega}(X/Y'). 
\end{align*} 
Also we have
\begin{align*}
\Hom(\oO_{\widehat{C}}, \dL f^{\ast} \aA_{\omega}) &\cong 
\Hom(\dR h_{!} \oO_C, \aA_{\omega}) \\
&\cong 0
\end{align*}
since $\dR h_{!} \oO_C=0$. 
This implies that $\dL f^{\ast} \aA_{\omega}$
is contained in $\oO_{\widehat{C}}^{\aA, \perp}$, 
proving (\ref{inc:12}). 
\end{proof}

Note that $h^{\ast}A(Y)$ is a real codimension one 
boundary of $\overline{A}(Y')$. 
We define the 
subset $A_h^{\dag}(Y) \subset 
\overline{A}^{\dag}(Y')$
by the Cartesian square
\begin{align*}
\xymatrix{ \ar@{}[rd]|{\square}
A_h^{\dag}(Y) \ar[r] \ar[d]_{g_{\ast}} & \overline{A}^{\dag}(Y') 
\ar[d]^{g_{\ast}} \\
h^{\ast} A(Y) \ar[r] & \overline{A}(Y'). 
}
\end{align*}
By Proposition~\ref{prop:liftY}, we have 
\begin{align}\label{codim:one}
\sigma_{Y'}(A_{h}^{\dag}(Y)) \subset
 \overline{U}(Y')
\end{align}
and it 
is a real codimension one boundary of $\overline{U}(Y')$. 
The following proposition shows the desired
property of $\overline{U}(Y) \cap \overline{U}(Y')$. 
\begin{prop}\label{prop:main1}
We have 
\begin{align*}
\sigma_{Y'}(A_h^{\dag}(Y)) \subset \overline{U}(Y). 
\end{align*}
\end{prop}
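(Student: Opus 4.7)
The plan is to realize $\sigma_{Y'}(\xi)$ for each $\xi \in A_h^{\dag}(Y)$ as a limit of stability conditions inside $U(Y)$. The conceptual backbone is Lemma~\ref{final:tilting}: $\aA_{\omega}(X/Y)$ and $\aA_{h^{\ast}\omega}(X/Y')$ differ only by a tilt at the torsion pair $(\langle \oO_{\widehat{C}}\rangle, \oO_{\widehat{C}}^{\aA,\perp})$, which is exactly the tilt produced by pushing the phase of $\oO_{\widehat{C}}$ through $1$ under a Bridgeland deformation.

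Concretely, at $\xi = f^{\ast}\omega + D \in A_h^{\dag}(Y)$, the projection formula gives $[\widehat{C}] \cdot f^{\ast}\omega = C \cdot h^{\ast}\omega = 0$, and the orthogonality $\mathrm{NS}_g(X)_{\mathbb{R}} \perp g^{\ast}\mathrm{NS}(Y')$ yields $[\widehat{C}] \cdot D = 0$. Hence $Z_{\xi}(\oO_{\widehat{C}}) = -\ch_2(\oO_{\widehat{C}}) \in \mathbb{R}_{<0}$ and $\oO_{\widehat{C}}$ sits at phase $1$ in $\sigma_{Y'}(\xi)$. I would then choose $\alpha \in \mathrm{NS}_f(X)_{\mathbb{R}}$ with $\alpha \cdot [\widehat{C}] < 0$; thanks to the decomposition $\mathrm{NS}_f(X)_{\mathbb{R}} = \mathrm{NS}_g(X)_{\mathbb{R}} \oplus \mathbb{R}[\widehat{C}]$ and $[\widehat{C}]^2 = -1$, the choice $\alpha = s[\widehat{C}]$ with $s > 0$ suffices. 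Using the explicit generators of $\cC_{X/Y}^{0}$ from Proposition~\ref{prop:Ctst}, a short check shows $\xi_t := \xi + t\alpha \in A^{\dag}(Y)$ for all sufficiently small $t > 0$: on the old generators of $\cC_{X/Y'}^{0}$ the positivity carried over from $D \in C_{g,\omega^2}(X)$ survives by continuity, while the new generator $\oO_{\widehat{C}}[-1]$ demands precisely $-\alpha \cdot [\widehat{C}] > 0$. Thus $\sigma_Y(\xi_t) \in U(Y)$ for small $t > 0$, and $Z_{\xi_t} \to Z_{\xi}$.

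It remains to show $\sigma_Y(\xi_t) \to \sigma_{Y'}(\xi)$ in $\Stab(X)_{\mathbb{R}}$. I would apply Theorem~\ref{thm:Brmain}: on a small neighborhood $\mathcal{V}$ of $\sigma_{Y'}(\xi)$, $\Pi_{\mathbb{R}}$ is a homeomorphism onto its image, and for small $t$ the central charge $Z_{\xi_t}$ lies there. The required matching $\sigma_Y(\xi_t) \in \mathcal{V}$ is essentially what Lemma~\ref{final:tilting} encodes: objects in $\oO_{\widehat{C}}^{\aA,\perp}$ lie in both hearts, so their phases vary continuously with $Z$, while $\oO_{\widehat{C}}$ (of phase $1$ at $\xi$) is replaced in $\sigma_Y(\xi_t)$ by $\oO_{\widehat{C}}[-1]$ whose phase tends to $0^{+}$, so the two slicings differ by a tilt that collapses as $t \to 0^{+}$. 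Convergence for rational $\xi$ then gives $\sigma_{Y'}(\xi) \in \overline{U}(Y)$, and the irrational case follows from continuity of $\sigma_{Y'}$ (Proposition~\ref{prop:liftY}) together with closedness of $\overline{U}(Y)$.

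The main obstacle is this last identification: verifying that Bridgeland's small-deformation machinery produces precisely the tilt of Lemma~\ref{final:tilting} requires controlling the phases of \emph{all} semistable objects in $\sigma_{Y'}(\xi)$ under the perturbation, not only $\oO_{\widehat{C}}$, so that no other wall-crossings are hidden. The support property from Proposition~\ref{prop:Adag} supplies the uniform bounds that make Bridgeland's argument apply, and the specific direction $\alpha = s[\widehat{C}]$, combined with the orthogonality structure of $\mathrm{NS}_g(X)_{\mathbb{R}}$, is what keeps the tilt localized at $\langle \oO_{\widehat{C}}\rangle$.
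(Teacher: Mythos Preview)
Your approach is essentially the paper's: perturb a rational boundary point $f^{\ast}\omega+D \in A_h^{\dag}(Y)$ by $t[\widehat{C}]$ into $A^{\dag}(Y)$, invoke Lemma~\ref{final:tilting} to relate the two hearts by a tilt, and then argue that the resulting stability conditions converge as $t\to 0^{+}$.

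The one substantive difference is in how the convergence step is handled. You try to argue directly, via the local homeomorphism of Theorem~\ref{thm:Brmain}, that $\sigma_Y(\xi_t)$ lands in a chosen neighborhood $\mathcal{V}$ of $\sigma_{Y'}(\xi)$, and you correctly flag this as the ``main obstacle'': one must rule out other lifts of $Z_{\xi_t}$ and control all phases, not just that of $\oO_{\widehat{C}}$. The paper bypasses this entirely by invoking Lemma~\ref{lem:limit} (from \cite[Lemma~7.1]{Tcurve1}): whenever two hearts $\aA,\aA'$ are related by a tilt and one has stability conditions $(Z_t,\aA)$ for rational $t>0$ with $Z_t\to Z_0$ and $(Z_0,\aA')\in\Stab(X)$, then $\sigma_t\to\sigma_0$. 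Since Lemma~\ref{final:tilting} puts you exactly in this situation, the convergence is immediate and no phase analysis of individual semistable objects is needed. Your direct argument can be made to work, but it amounts to reproving this lemma in the special case at hand; citing it is cleaner.
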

\begin{proof}
It is enough to show the claim for rational points in $A_h^{\dag}(Y)$. 
Let us take a rational point in $A_h^{\dag}(Y)$
\begin{align*}
g^{\ast}h^{\ast}\omega + D =f^{\ast}\omega +D \in A_h^{\dag}(Y)
\end{align*}
for $\omega \in A(Y)$ and $D\in C_{g, \omega^2}(X)$. 
By (\ref{codim:one}), we have the point
\begin{align}\label{foDi}
\sigma_{f^{\ast}\omega +D} \in \overline{U}(Y'). 
\end{align} 
On the other hand, if we take a rational number
$0<t \ll 1$, which is 
sufficiently small depending on $\omega$ and $D$, 
we have 
\begin{align}\label{faoD}
f^{\ast}\omega + D + t\widehat{C} \in A^{\dag}(Y)
\end{align}
by the description of $C_{f, \omega^2}(X)$ 
in the proof of Lemma~\ref{lem:Cf}. 
Hence we have the point
\begin{align*}
\sigma_{f^{\ast}\omega + D + t\widehat{C}} \in U(Y). 
\end{align*}
It is enough to show that 
\begin{align}\label{fol:lim}
\lim_{t\to +0} \sigma_{f^{\ast}\omega + D + t\widehat{C}}
= \sigma_{f^{\ast}\omega +D}. 
\end{align}
The relation (\ref{fol:lim})
obviously follows at the level of central charges. 
Also the hearts of
bounded t-structures associated to 
(\ref{foDi}), (\ref{faoD}) are
\begin{align*}
\aA_{h^{\ast}\omega}(X/Y'), \quad 
\aA_{\omega}(X/Y) 
\end{align*}
respectively. By Lemma~\ref{final:tilting}, these 
t-structures are related by a tilting. Moreover 
the heart $\aA_{\omega}(X/Y)$ is independent of $t$. 
Therefore we can apply 
Lemma~\ref{lem:limit} below, and conclude that
the relation (\ref{fol:lim}) holds. 
\end{proof}
We have used the following lemma, which is proved in~\cite{Tcurve1}. 
\begin{lem}{\bf (\cite[Lemma~7.1]{Tcurve1}) }
\label{lem:limit}
Let $\aA, \aA'$ be the hearts of bounded t-structures on $\dD$, 
which are related by a tilting. Let
\begin{align*}
[0, 1) \ni t \mapsto Z_t \in N(X)_{\mathbb{C}}^{\vee}
\end{align*}
be a continuous map such that $\sigma_t =(Z_t, \aA)$ for
any rational number
$0< t <1$ and $\sigma_0=(Z_0, \aA')$
determine points in $\Stab(X)$. Then we 
have $\lim_{t\to +0} \sigma_t =\sigma_0$. 
\end{lem}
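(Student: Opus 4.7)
The plan is to use the local homeomorphism theorem (Theorem~\ref{thm:Brmain}) to reduce convergence of $\sigma_t$ to convergence of central charges, together with Bridgeland's deformation lemma which characterises stability conditions near $\sigma_0$ in terms of small tilts of $\aA'$. Concretely, I would first choose an open neighbourhood $\bB \subset \Stab(X)$ of $\sigma_0$ on which $\Pi$ restricts to a homeomorphism onto an open set $V \ni Z_0$. Since $t \mapsto Z_t$ is continuous, for all sufficiently small $t$ the central charge $Z_t$ lies in $V$, hence there is a unique $\tau_t \in \bB$ with $\Pi(\tau_t)=Z_t$ and $\tau_t \to \tau_0 = \sigma_0$. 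The theorem will then follow from the identification $\sigma_t = \tau_t$ for all sufficiently small rational $t>0$.

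Next I would analyse the hearts that arise in $\bB$. Let $\pP_0$ be the slicing of $\sigma_0$, so $\aA' = \pP_0((0,1])$. Bridgeland's deformation result says that, up to shrinking $\bB$, the heart of any $\sigma \in \bB$ has the form $\pP_0((\phi,\phi+1])$ for some $\phi$ close to $0$; equivalently, every such heart is a tilt of $\aA'$ with respect to a torsion pair arising from a narrow slice of $\pP_0$. The hypothesis that $\aA$ is a tilt of $\aA'$ means there is a torsion pair $(\tT,\fF)$ in $\aA'$ with $\aA = \langle \fF[1], \tT\rangle$. To match $\aA$ with one of the deformed hearts, I would examine how the phases of objects in $\tT$ and $\fF$ (measured by $\sigma_0$) behave: objects in $\tT$ should have $\sigma_0$-phases in $(0,1]$, and objects in $\fF[1]$ should have $\sigma_0$-phases approaching $1$ from above under the small perturbation by $Z_t$. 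This is precisely the tilt needed to produce $\aA$ from $\pP_0((\phi_t,\phi_t+1])$ with $\phi_t \to 0^-$.

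The main obstacle is to show that $\sigma_t$ actually lies in the deformation neighbourhood $\bB$, since a priori even though $Z_t$ is close to $Z_0$, the slicings of $\sigma_t$ and $\sigma_0$ could differ substantially. To handle this, I would exploit the rationality of $t > 0$: since $\Imm Z_t$ is discrete on $N(X)$, minimal imaginary parts of semistable objects are bounded below, which together with the support property of $\sigma_0$ gives a uniform bound on phase discrepancies between $\sigma_t$-semistables (living in $\aA$) and their $\sigma_0$-decompositions (in $\aA'$). Combined with $Z_t \to Z_0$ and the explicit tilt relation between $\aA$ and $\aA'$, this places $\sigma_t$ in $\bB$ for $t$ small enough. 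Uniqueness of the lift through $\Pi|_\bB$ then forces $\sigma_t = \tau_t$, and continuity of the inverse gives $\lim_{t\to +0}\sigma_t = \sigma_0$. The delicate step is the uniform phase control, which is where the tilting hypothesis and the support property must combine; without it one cannot rule out ``large'' jumps in the slicing as $t \to 0^+$.
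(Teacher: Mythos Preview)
The paper does not give its own proof of this lemma: it simply cites \cite[Lemma~7.1]{Tcurve1} and moves on. So there is no in-paper argument to compare your proposal against; I will instead comment on whether your outline would actually establish the result.

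Your overall plan---lift $Z_t$ through the local homeomorphism near $\sigma_0$ to obtain $\tau_t \to \sigma_0$, then prove $\sigma_t = \tau_t$---is the natural one, and the identification $\sigma_t=\tau_t$ is indeed the whole content. The difficulty is that your proposed mechanism for this identification does not work as written. You invoke ``rationality of $t$ gives discreteness of $\Imm Z_t$, hence a lower bound on imaginary parts, which together with the support property of $\sigma_0$ yields uniform phase control.'' But the minimal positive value of $\Imm Z_t$ depends on $t$ and may tend to $0$ as $t\to 0^+$, so discreteness gives no uniform bound. And the support property of $\sigma_0$ controls $\lVert E\rVert / \lvert Z_0(E)\rvert$ for $\sigma_0$-semistable $E$; it is not clear how you pass from this to control on the $\sigma_0$-phases of the HN factors of a $\sigma_t$-semistable object. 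Your sketch also fixes one direction of the tilt (writing $\aA=\langle \fF[1],\tT\rangle$), whereas in the paper's application the tilt goes the other way, $\aA=\langle \fF',\tT'[-1]\rangle$; this matters for the sign of the phase drift.

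What actually makes the argument go through is a constraint you have not used: since $(Z_t,\aA)$ is a stability condition for rational $t>0$ and $(Z_0,\aA')$ is one as well, any nonzero object $E$ in the torsion part $\tT'$ (with $\aA=\langle \fF',\tT'[-1]\rangle$) satisfies $Z_t(E)\in -\mathbb{H}$ for all rational $t>0$ and $Z_0(E)\in\mathbb{H}$; continuity of $t\mapsto Z_t$ then forces $Z_0(E)\in\mathbb{R}_{<0}$. In other words $\tT'\subset \pP_0(1)$, so $\aA\subset \pP_0([0,1])$ rather than merely $\pP_0((-1,1])$. This is the input that, combined with Bridgeland's result that two stability conditions with the same central charge and slicing distance $<1$ coincide, lets one match $\sigma_t$ with the deformation $\tau_t$ of $\sigma_0$. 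Your write-up should isolate this step explicitly; the vague appeal to discreteness and support does not substitute for it.
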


\subsection{Moduli spaces}
Let $\mM$ be the algebraic space which 
parameterizes objects $E\in D^b \Coh(X)$
satisfying
\begin{align*}
\Ext^{<0}(E, E)=0, \ 
\Hom(E, E)=\mathbb{C}
\end{align*}
constructed by Inaba~\cite{Inaba}. 
For $\sigma=(Z, \aA) \in \Stab(X)_{\mathbb{R}}$, let
\begin{align*}
\mM^{\sigma}([\oO_x]) \subset \mM
\end{align*}
be the subspace which parameterizes
$Z$-stable objects $E\in \aA$ with $\ch(E)=\ch(\oO_x)$
for $x\in X$. 
Note that, a priori, $\mM^{\sigma}([\oO_x])$ is just an abstract 
subfunctor of $\mM$ from
the category of $\mathbb{C}$-schemes 
to the category of sets.   
The subspace $\mM^{\sigma}([\oO_x])$ is shown to be
an algebraic subspace if 
the openness of $\sigma$-stable objects is proved. (See~\cite{Tst3}
for the arguments when $X$ is a K3 surface or an abelian surface.)
The following proposition completes the proof of Theorem~\ref{thm:intro}:
\begin{prop}\label{prop:main2}
For $\sigma \in U(Y)$, the 
space $\mM^{\sigma}([\oO_x])$ is 
an open algebraic subspace of $\mM$, and isomorphic 
to $Y$. 
\end{prop}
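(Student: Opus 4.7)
My plan is to identify $\mM^{\sigma}([\oO_x])$ with $Y$ via the assignment $y \mapsto \dL f^{\ast}\oO_y$. Take $\sigma = \sigma_{f^{\ast}\omega + D} \in U(Y)$ and classify the $\sigma$-stable objects with Chern character $\ch(\oO_x)$: for such an $E \in \aA_{\omega}(X/Y)$, the torsion pair of Lemma~\ref{lem:ACA} produces a short exact sequence
\begin{align*}
0 \to F \to E \to \dL f^{\ast} M \to 0
\end{align*}
with $F \in \cC_{X/Y}^{0}$ and $M \in \aA_{\omega}$. Since $c_1(E) = 0$, the equality
\begin{align*}
0 = \Imm Z_{f^{\ast}\omega + D}(E) = c_1(F) \cdot D + c_1(M) \cdot \omega
\end{align*}
together with the non-negativity of each summand (by the defining positivity of $D \in C_{f, \omega^2}(X)$ combined with (\ref{im:pos}), and by the construction of $\aA_{\omega}$) forces both to vanish. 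The strict positivity of $D$ on nonzero objects of $\cC_{X/Y}^{0}$ yields $F = 0$, so $E \cong \dL f^{\ast} M$ with $\ch(M) = \ch(\oO_y)$, and Lemma~\ref{Ox:stable}(ii) applied on $Y$ identifies $M \cong \oO_y$ for some $y \in Y$. Conversely, $\oO_y$ is simple in $\aA_{\omega}$ by Lemma~\ref{Ox:stable}(i), hence trivially $Z_{\omega, D}$-stable, and Lemma~\ref{rel:pull} promotes this to $\sigma$-stability of $\dL f^{\ast}\oO_y$.

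To realize the assignment as a morphism, set $\eE \cneq \dL(f \times \id_Y)^{\ast} \oO_{\Delta_Y} \in D^b \Coh(X \times Y)$. Factoring the inclusion $X \hookrightarrow X \times Y$, $x \mapsto (x, y)$, through the map $\alpha_y \colon Y \to Y \times Y$, $y' \mapsto (y', y)$, which is transverse to $\Delta_Y$, derived base change gives $\eE|_{X \times \{y\}} \cong \dL f^{\ast}\oO_y$. This defines a morphism $\varphi \colon Y \to \mM$, $y \mapsto \dL f^{\ast}\oO_y$. The projection formula together with $\dR f_{\ast}\oO_X \cong \oO_Y$ yields
\begin{align*}
\dR \Hom(\dL f^{\ast}\oO_y, \dL f^{\ast}\oO_{y'}) \cong \dR \Hom_Y(\oO_y, \oO_{y'}),
\end{align*}
so $\dL f^{\ast}\oO_y$ satisfies Inaba's conditions $\Ext^{<0} = 0$ and $\Hom = \mathbb{C}$ (so $\varphi$ lands in $\mM$), $\varphi$ is injective on $\mathbb{C}$-points, and the induced map on tangent spaces $T_y Y = \Ext^1(\oO_y, \oO_y) \to \Ext^1(\dL f^{\ast}\oO_y, \dL f^{\ast}\oO_y)$ is an isomorphism. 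Hence $\varphi$ is an \'etale monomorphism, hence an open immersion onto an open algebraic subspace of $\mM$, which by the classification above is precisely $\mM^{\sigma}([\oO_x])$.

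The main obstacle I anticipate is promoting the bijection of the classification step to an equality of subfunctors of $\mM$, i.e. the openness of $\sigma$-stability in families. Granting the \'etaleness of $\varphi$, this reduces to showing that the complement of $\varphi(Y)$ in $\mM$, consisting of those families whose fibers admit a nonzero $F \in \cC_{X/Y}^{0}$ in the torsion pair decomposition, is closed; this should follow from the constructibility of the condition of lying in the heart $\aA_{\omega}(X/Y)$ together with specialization of the torsion pair sequence. A secondary technical point is the derived base change formula for $\eE$, which is handled by the transversality factorization through $\alpha_y$ above.
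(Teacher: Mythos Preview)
Your classification of $\sigma$-stable objects and the construction of $\varphi \colon Y \to \mM$ are essentially the same as the paper's, and correct. The genuine gap is in the openness step, where you and the paper diverge.

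You infer that $\varphi$ is \'etale from injectivity on $\mathbb{C}$-points and bijectivity on tangent spaces. This inference is not valid in general: it requires knowing that $\mM$ is smooth (or at least reduced of the right dimension) at the image points, which you have not established. The fix is within reach of what you already computed: the full isomorphism $\dR\Hom(\dL f^{\ast}\oO_y, \dL f^{\ast}\oO_{y'}) \cong \dR\Hom_Y(\oO_y, \oO_{y'})$ shows that $\dL f^{\ast}$ identifies the deformation functor of $\dL f^{\ast}\oO_y$ with that of $\oO_y$, hence the formal neighborhood in $\mM$ is smooth and $\varphi$ is formally \'etale, hence \'etale. But you only invoked the $\Ext^1$ piece, so as written the argument is incomplete. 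Your final paragraph then misidentifies the remaining obstacle: once $\varphi$ is an open immersion, the complement of $\varphi(Y)$ is automatically closed, and in any case that complement is not described by ``nonzero $F$ in the torsion pair decomposition'' --- most objects of $\mM$ are not in $\aA_{\omega}(X/Y)$ at all.

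The paper avoids this issue by proving openness differently. Instead of arguing via \'etaleness of $\varphi$, it observes that an object $E \in D^b\Coh(X)$ lies in the essential image of $\dL f^{\ast}$ if and only if $\Hom(\dD_{X/Y}, E) = 0$; since $\dD_{X/Y}$ is generated by finitely many objects, this vanishing is an open condition by upper semicontinuity. Hence any small deformation of $\dL f^{\ast}\oO_y$ is of the form $\dL f^{\ast}M$, and $M$ is then a small deformation of $\oO_y$, so $M \cong \oO_{y'}$. This shows directly that $\mM^{\sigma}([\oO_x])$ is open in $\mM$, and then bijectivity on points and tangent spaces finishes. The paper's route is more elementary in that it never needs to discuss smoothness of $\mM$ or \'etaleness; yours is more geometric but requires the extra deformation-theoretic step you omitted. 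Both also implicitly reduce to rational $\omega, D$ so that the heart $\aA_{\omega}(X/Y)$ is the one explicitly constructed --- the paper says ``by deforming $\sigma$'', which you should also note.
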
 
\begin{proof}
By deforming $\sigma \in U(Y)$,
we may assume that $\sigma$ is written as
\begin{align*}
(Z_{f^{\ast}\omega +D}, \aA_{\omega}^{\dag}(X/Y))
\end{align*}
for some rational point 
$f^{\ast}\omega +D \in A^{\dag}(Y)$.
In order to reduce the notation, we 
write $Z=Z_{f^{\ast}\omega +D}$.  
Let us take an object $E \in \aA_{\omega}^{\dag}(X/Y)$, 
giving a $\mathbb{C}$-valued point 
of $\mM^{\sigma}([\oO_x])$. It fits 
into an exact sequence in $\aA_{\omega}^{\dag}(X/Y)$
\begin{align*}
0 \to F \to E \to \dL f^{\ast} M \to 0
\end{align*}
for some $F\in \cC_{X/Y}^{0}$ and $M \in \aA_{\omega}$.  
If $F\neq 0$, we have
$\Imm Z(F)>0$, $\Imm Z(\dL f^{\ast}M) \ge 0$, 
hence $\Imm Z(E)>0$. 
This contradicts to $\ch(E)=\ch(\oO_x)$, 
hence $F=0$
and $E \cong \dL f^{\ast} M$. 
Since $M \in \aA_{\omega}$ satisfies 
$\ch(M)=\ch(\oO_y)$, 
Lemma~\ref{Ox:stable}
implies that $M\cong \oO_y$ for some $y\in Y$,
i.e. $E \cong \dL f^{\ast} \oO_y$. 
Conversely, let us consider the object
$\dL f^{\ast}\oO_y \in \aA_{\omega}(X/Y)$. 
Since $\oO_y \in \aA_{\omega}$ is $Z_{\omega, D}$-stable 
by Lemma~\ref{Ox:stable}, Lemma~\ref{rel:pull} implies that 
$\dL f^{\ast} \oO_y \in \aA_{\omega}(X/Y)$
is also $Z$-stable. 

The above argument shows that the morphism 
\begin{align}\label{mor:YM}
Y \to \mM
\end{align}
sending $y$ to $\dL f^{\ast}\oO_y$ induces a bijection 
between closed points of $Y$ and those of $\mM^{\sigma}([\oO_x])$. 
Also since the functor 
\begin{align*}
\dL f^{\ast} \colon D^b \Coh(Y) \to D^b \Coh(X)
\end{align*}
is fully faithful, the 
morphism (\ref{mor:YM})
is bijective on the tangent spaces. 
Therefore it is enough to show that 
$\mM^{\sigma}([\oO_x])$ is open in $\mM$, which 
follows if we show that the objects of the 
form $\dL f^{\ast} \oO_y$ are closed under deformations. 

Note that an object $E\in D^b \Coh(X)$ is written as 
$\dL f^{\ast} M$ for some $M \in D^b \Coh(Y)$
if and only if 
\begin{align}\label{DXYE}
\Hom(\dD_{X/Y}, E)=0
\end{align}
 where
$\dD_{X/Y}$ is defined in Subsection~\ref{subsec:perverse}. 
Since $\dD_{X/Y}$ is the smallest triangulated 
subcategory which contains some finite 
number of objects in $\dD_{X/Y}$,
the condition (\ref{DXYE}) is an open 
condition by upper semicontinuity.
Hence if $E$ is a small 
deformation of $\dL f^{\ast} \oO_y$, it is 
of the form $\dL f^{\ast}M$ for some 
$M\in D^b \Coh(Y)$. Then $M$ is a small 
deformation of $\oO_y$, hence 
$M\cong \oO_{y'}$ 
for some $y'\in Y$. The statement is now proved. 
\end{proof}

\section{Some technical results}\label{sec:tech}
In this section, we give proofs of Proposition~\ref{prop:lift}
and Proposition~\ref{prop:liftY}. 
\subsection{Proof of Proposition~\ref{prop:lift}}\label{pf:lift}
\begin{proof}
We divide the proof into four steps. 
\begin{sstep}\label{step:pro}
Continuity of $\sigma$ at rational points. 
\end{sstep}
Let us take a rational point
$\omega \in \overline{A}(X)$. 
By Theorem~\ref{thm:Brmain} and Proposition~\ref{prop:stab}, 
there are open neighborhoods
\begin{align}\label{neighbor}
\omega \in U_{\omega} \subset \mathrm{NS}(X)_{\mathbb{R}}, \quad 
\sigma_{\omega} \in 
\uU_{\omega} \subset \Stab(X)_{\mathbb{R}}
\end{align}
such that $\Pi_{\mathbb{R}}$ restricts to a homeomorphism 
between $\uU_{\omega}$ and $U_{\omega}$. 
We claim that, 
after shrinking (\ref{neighbor}) if necessary, 
we have 
\begin{align}\label{sig:lim}
\sigma_{\omega'} \in \uU_{\omega}, 
\mbox{ 
for any rational }
\omega' \in U_{\omega} \cap \overline{A}(X).
\end{align}
To prove this, we may assume that $\omega'$
lies in the interior 
$A(X) \subset \overline{A}(X)$
since $\Stab(X)_{\mathbb{R}}$ is Hausdorff.
Let us take a stability condition 
$\widetilde{\sigma}_{\omega'} \in \uU_{\omega}$
satisfying $\Pi_{\mathbb{R}}(\widetilde{\sigma}_{\omega'})=\omega'$. 
By~\cite[Proposition~3.14]{Todext}, 
after shrinking (\ref{neighbor}) if necessary, 
any object $\oO_x$ for $x\in X$ is 
 $\widetilde{\sigma}_{\omega'}$-stable
of phase one. 
Then Lemma~\ref{lem:Brlem} below
shows that 
\begin{align*}
\widetilde{\sigma}_{\omega'}= 
\sigma_{\omega'}. 
\end{align*}
Therefore the condition (\ref{sig:lim}) holds.

\begin{sstep}\label{step:lim}
Partial extension of $\sigma$ to irrational points. 
\end{sstep}
By the property (\ref{sig:lim}) and Theorem~\ref{thm:Brmain}, 
there is an open subset $U \subset \overline{A}(X)$, 
which contains all the rational points, such that 
the construction in Proposition~\ref{prop:stab}
extends to a continuous map 
\begin{align*}
\sigma_U \colon U \to \Stab(X)_{\mathbb{R}}. 
\end{align*}
It is enough to show that $\sigma_U$ extends to the whole 
$\overline{A}(X)$. 
We first show that $\sigma_U$ extends to 
$U \cup A(X)$. 
Let us take an irrational point
$\omega \in A(X)$, 
and rational points
$\omega_j \in A(X)$ for $j\ge 1$
which converge to $\omega$.
By Proposition~\ref{prop:stab},
there is a constant $K_j>0$ such that 
\begin{align*}
\frac{ \lVert E \rVert}{\lvert Z_{\omega_j}(E) \rvert}
<K_{j}
\end{align*}  
for any non-zero 
$\sigma_{\omega_j}$-semistable object $E$. 
By the evaluation of $K_j$ in the proof of~\cite[Proposition~3.13]{Todext}, 
one can easily check that the $K_j$ is
taken to be independent of $j$. 
Indeed 
we can take $K_j$ so that 
\begin{align}\label{Kc}
K_j^2 < c_0 + c_1 \cdot C_{\omega_j} + 
c_2 \cdot l_j + c_3 \cdot m_j
\end{align}
where $c_i$ are positive constants which 
are independent of $j$, and 
$C_{\omega_j}$, $l_j$ and $m_j$ 
are given by
\begin{align*}
&C_{\omega_j} \cneq \mathrm{sup} \left\{
- \frac{D^2 \cdot \omega_j^2}{(D\cdot \omega_j)^2} : 
D \mbox{ is an effective divisor on } X, D^2 \le 0
\right\} \\
& l_j \cneq \sup \left\{ \frac{u}{(u+ \omega_j^2/2)^2} : u>0 \right\} \\
&m_j \cneq \sup \left\{
 \frac{y^2}{ (-y + \omega_j^2/2)^2 + x^2} : 
(x, y) \in \mathbb{R}^2, x^2 \ge 2\omega_j^2 y \right\}. 
\end{align*}
Here $C_{\omega_j}$ is obtained in~\cite[Lemma~3.20]{Todext}
and~\cite[Corollary~7.3.3]{BMT}, 
$l_j$ appears in the proof of~\cite[Proposition~3.19]{Todext}
and $m_j$ appears in the proof of~\cite[Proposition~3.19]{Todext}.
The argument of~\cite[Proposition~3.19]{Todext}
implies the inequality of the form (\ref{Kc}). 
By the openness of $A(X)$, the values 
$C_{\omega_j}$, $l_j$ and $m_j$
are 
are bounded above
by a positive constant which is  
independent of $j$. 

The fact that $K_j$ is bounded above 
easily implies that 
\begin{align*}
\lim_{j \to \infty} \left\{ 
\frac{\lvert Z_{\omega_j}(E) - Z_{\omega}(E)\rvert}{\lvert Z_{\omega_j}(E) 
\rvert}
: E \mbox{ is } \sigma_{\omega_j} \mbox{-semistable } \right\} =0. 
\end{align*}
Therefore, by~\cite[Theorem~7.1]{Brs1}, 
there is $\sigma_{\omega} \in \Stab(X)_{\mathbb{R}}$
satisfying
\begin{align}\label{limjs}
\lim_{j \to \infty} \sigma_{\omega_j} =
\sigma_{\omega}. 
\end{align}

\begin{sstep}\label{sstep:lift}
Well-definedness of $\sigma_{\omega}$. 
\end{sstep}
We need to show that $\sigma_{\omega}$ in (\ref{limjs})
is independent of $\omega_j$. 
In order to show this, 
we claim that $\oO_x$ is $\sigma_{\omega}$-stable
for any $x\in X$. 
Suppose that $\oO_x$ is not $\sigma_{\omega}$-stable. 
Since $\omega_j$ is rational, 
$\oO_x \in \aA_{\omega_j}$ is $\sigma_{\omega_j}$-stable
by Lemma~\ref{Ox:stable}, hence
 $\oO_x$ is $\sigma_{\omega}$-semistable.
This implies that there is a
non-trivial $\sigma_{\omega}$-stable factor $A$ of $\oO_x$, 
and $\omega$ is a solution of $\omega \cdot c_1(A)=0$. 

On the other hand, 
let us take a sufficiently small open neighborhood 
$\sigma_{\omega} \in \uU_{\omega}$. 
Since $\sigma_{\omega}$ satisfies the support property, 
there is a wall and chamber structure on $\uU_{\omega}$
with finite number of codimension one walls 
such that 
the set of semistable objects $E$ with $\ch(E)=\ch(\oO_x)$ is constant
at a chamber but jumps at a wall.
By the argument as above, $\sigma_{\omega}$ lies 
at the wall of the form $\Pi_{\mathbb{R}}(\ast) \cdot c_1(A)=0$. 
Since the image of this wall
under $\Pi_{\mathbb{R}}$ contains dense rational points, we 
can deform $\sigma_{\omega}$ to
$\widetilde{\sigma}_{\omega''}$
on the wall such that its image 
under $\Pi_{\mathbb{R}}$ is 
 a rational point $\omega'' \in A(X)$. 
Since $\widetilde{\sigma}_{\omega''}$
lies on the wall, it is a limit of 
stability conditions of the form $\sigma_{\omega_{j}''}$
for $j\ge 1$ 
with $\omega_{j}''$ rational and 
$\omega_{j}'' \to \omega''$.  However, by 
the property (\ref{sig:lim}), the stability condition 
$\sigma_{\omega''}$ is also the 
limit of $\sigma_{\omega_{j}''}$. 
Therefore $\widetilde{\sigma}_{\omega''}=\sigma_{\omega''}$, 
which is a contradiction
since $\oO_x$ is not $\widetilde{\sigma}_{\omega''}$-stable 
but $\sigma_{\omega''}$-stable. 
Therefore $\oO_x$ is $\sigma_{\omega}$-stable, 

Since $\oO_x$ is $\sigma_{\omega}$-stable, 
if we take open subsets as in (\ref{neighbor}) for
an irrational $\omega$, then 
the same argument as in Step~\ref{step:pro} shows that 
they satisfy the condition (\ref{sig:lim}). 
This immediately implies that $\sigma_{\omega}$
is independent of the choice of $\omega_j$. 
Hence $\sigma_U$ extends to the continuous 
map from $U \cup A(X)$, by sending 
$\omega$ to $\sigma_{\omega}$. 

\begin{sstep}\label{sstep:final}
Extension of $\sigma$ to all the irrational 
points. 
\end{sstep}
The final step is to extend the map from 
$U \cup A(X)$ to the map from $\overline{A}(X)$. 
Let us take an irrational point 
$\omega \in \overline{A}(X) \setminus A(X)$, 
and rational points $\omega_j \in \overline{A}(X) \setminus A(X)$
for $j\ge 1$
which converge to $\omega$. 
By the same argument as in Step~\ref{step:lim}, 
the limit $\sigma_{\omega}$ of $\sigma_{\omega_j}$ exists. 
Note that $A(X)$ is continuously embedded into $\Stab(X)_{\mathbb{R}}$
by the previous step, 
which gives a section of $\Pi_{\mathbb{R}}$ over $A(X)$. 
Since $\sigma_{\omega_j}, \sigma_{\omega}$ lie at 
its boundary, $\sigma_{\omega}$ is uniquely determined by $\omega$
if it exists, and independent of the choice of $\omega_j$. 
Now the 
assignment $\omega \mapsto \sigma_{\omega}$
gives the desired continuous map (\ref{map:lift}). 
\end{proof}

We have used the following lemma, 
which is essentially proved in~\cite{Brs2}. 
\begin{lem}\label{lem:Brlem}
Let $\aA \subset D^b \Coh(X)$ be the heart of a
bounded t-structure and $\omega \in \mathrm{NS}(X)_{\mathbb{R}}$
is ample. Suppose that the following conditions hold: 
\begin{itemize}
\item The pair $(Z_{\omega}, \aA)$ is a stability condition on $D^b \Coh(X)$. 
\item For any $x\in X$, we have $\oO_x \in \aA$, and it is $Z_{\omega}$-stable. \end{itemize}
Then we have $\aA=\aA_{\omega}$. 
\end{lem}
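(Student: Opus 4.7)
The plan is to show the containment $\aA_\omega \subset \aA$, which suffices to conclude equality since both are hearts of bounded t-structures on $D^b \Coh(X)$. Recalling from Subsection~\ref{tilt:Y} that $\aA_\omega = \langle \fF_\omega[1], \tT_\omega \rangle$, where $\tT_\omega$ contains the torsion sheaves and the $\mu_\omega$-semistable torsion-free sheaves of strictly positive slope while $\fF_\omega$ consists of the $\mu_\omega$-semistable torsion-free sheaves of non-positive slope, it is enough to verify $\tT_\omega \subset \aA$ and $\fF_\omega[1] \subset \aA$.

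First I would observe that $\oO_x$ is a simple object of $\aA$: since $Z_\omega(\oO_x) = -1$ has phase $1$, any proper nonzero subobject $F \subsetneq \oO_x$ in $\aA$ would have phase strictly less than $1$ by $Z_\omega$-stability, forcing the quotient to have phase strictly greater than $1$ and violating that $\aA$ equals $\pP((0,1])$ for the associated slicing $\pP$.

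Next I would show that every $F \in \Coh(X)$ has $\aA$-cohomology concentrated in degrees $\{-1, 0\}$, so that $\aA$ is obtained as a tilt of $\Coh(X)$ with respect to the torsion pair $(\tT, \fF) \cneq (\aA \cap \Coh(X),\, \aA[-1] \cap \Coh(X))$. Following the strategy of Bridgeland~\cite[Section~10]{Brs2}, the key input is that $\Hom(\oO_x, \aA[k]) = 0$ for $k < 0$ by maximality of the phase of $\oO_x$ together with its simplicity; combined with the existence, for any nonzero $F \in \Coh(X)$, of a nonzero morphism $F \to \oO_x$ for suitably chosen $x$ in the support of $F$, a devissage by dimension of support eliminates $\aA$-cohomology in degrees $\le -2$ and $\ge 1$.

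Finally I would identify $(\tT, \fF)$ with $(\tT_\omega, \fF_\omega)$ by taking $\mu_\omega$-Harder-Narasimhan factors and reducing to the $\mu_\omega$-semistable case. For a torsion-free $\mu_\omega$-semistable $F$ with $\mu_\omega(F) > 0$ one has $\Imm Z_\omega(F) > 0$, so $F[1] \in \aA$ would violate $\Imm Z_\omega \ge 0$ on $\aA$; hence $F \in \tT$. For the borderline case $\mu_\omega(F) = 0$ with $\ch_1(F) \ne 0$, the Hodge index theorem gives $\ch_1(F)^2 < 0$, and the Bogomolov inequality together with $Z_\omega$-semistability places $F[1] \in \aA$, so $F \in \fF$. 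Symmetrically, $\mu_\omega(F) < 0$ gives $F \in \fF$. Torsion sheaves fall into $\tT$ via the identical analysis of $Z_\omega$. The main obstacle is the cohomological concentration step, which is the technical heart of the argument and transports Bridgeland's K3 analysis to our setting using the support property of $\sigma$ assumed in the statement.
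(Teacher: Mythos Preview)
Your outline is correct and matches the paper's approach, which simply cites \cite[Proposition~10.3, Step~2 and Lemma~10.1]{Brs2} and observes that Bridgeland's K3 argument carries over verbatim to arbitrary projective surfaces. One small correction: the support property is neither assumed in the statement nor needed in the argument---the cohomological concentration step (Bridgeland's Lemma~10.1) uses only that each $\oO_x$ is stable of maximal phase in $\aA$ together with Serre duality for skyscraper sheaves (which is insensitive to $\omega_X$ since $\oO_x\otimes\omega_X\cong\oO_x$), not any norm bound.
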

\begin{proof}
The result is essentially proved in~\cite[Proposition~10.3, Step~2]{Brs2}, 
using~\cite[Lemma~10.1]{Brs2}. 
Although these results in~\cite{Brs2} are stated 
for K3 surfaces or abelian surfaces, one can 
see that the arguments work for arbitrary projective surfaces. 
\end{proof}

\subsection{Proof of Proposition~\ref{prop:liftY}}\label{subsec:techY}
\begin{proof}
The proof is similar to that of Proposition~\ref{prop:lift}, 
but we need to take more care because we are 
no longer able to use Lemma~\ref{lem:Brlem}. 

\begin{step}
Continuity at rational points: 
reduction to the equality of stability conditions
given as (\ref{sig'=sig}). 
\end{step}
Let us take a rational point
$f^{\ast}\omega +D \in \overline{A}^{\dag}(Y)$. 
By Theorem~\ref{thm:Brmain} and Proposition~\ref{prop:stab}, 
there are open neighborhoods
\begin{align}\label{neighbor1}
&f^{\ast}\omega +D \in U_{\omega, D} \subset \mathrm{NS}(X)_{\mathbb{R}} \\
\label{neighbor2}
&\sigma_{f^{\ast}\omega +D} \in 
\uU_{\omega, D} \subset \Stab(X)_{\mathbb{R}}
\end{align}
such that $\Pi_{\mathbb{R}}$ restricts to a homeomorphism 
between $\uU_{\omega, D}$ and $U_{\omega, D}$. 
We claim that, 
after shrinking (\ref{neighbor1}), (\ref{neighbor2})
 if necessary, 
we have 
\begin{align}\label{sig:lim2}
\sigma_{f^{\ast}\omega'+D'} \in \uU_{\omega, D}, 
\mbox{ 
for any rational }
f^{\ast}\omega' +D' \in U_{\omega, D} \cap \overline{A}^{\dag}(Y).
\end{align}
Let us take
 $\widetilde{\sigma}_{f^{\ast}\omega' +D'} \in \uU_{\omega, D}$
whose image under $\Pi_{\mathbb{R}}$ is $f^{\ast}\omega' +D'$. 
It is enough to show
\begin{align}\label{sig'=sig}
\widetilde{\sigma}_{f^{\ast}\omega' +D'}
= \sigma_{f^{\ast}\omega' +D'}. 
\end{align}

\begin{step}\label{step:slicing}
A preparation of slicing to prove (\ref{sig'=sig}). 
\end{step}
Below, we assume that the reader is familiar with
the notion of slicings in the original paper~\cite{Brs1}, 
and the related notations. 
The stability conditions $\sigma_{f^{\ast}\omega +D}$, 
$\widetilde{\sigma}_{f^{\ast}\omega' +D'}$ are written 
as pairs
\begin{align*}
&\sigma_{f^{\ast}\omega +D}=(Z_{f^{\ast}\omega +D}, \pP) \\ 
&\widetilde{\sigma}_{f^{\ast}\omega' +D'}=(Z_{f^{\ast}\omega' +D'}, \pP')
\end{align*}
for slicings $\pP= \{ \pP(\phi)\}_{\phi \in \mathbb{R}}$, 
$\pP'=\{\pP'(\phi)\}_{\phi \in \mathbb{R}}$. 
We also consider stability conditions on $D^b \Coh(Y)$
\begin{align*}
&\sigma_{\omega, D}=(Z_{\omega, D}, \aA_{\omega}) \\ 
&\sigma_{\omega', D'}=(Z_{\omega', D'}, \aA_{\omega'})
\end{align*}
considered in Lemma~\ref{lem:prepare}. 
We denote by 
\begin{align*}
\{\qQ(\phi)\}_{\phi \in \mathbb{R}}, \ 
\{ \qQ'(\phi)\}_{\phi \in \mathbb{R}}
\end{align*} the slicings 
corresponding to $\sigma_{\omega, D}$, $\sigma_{\omega', D'}$
respectively. 

By shrinking (\ref{neighbor1}), (\ref{neighbor2}) if necessary, 
there is $0<\epsilon <1/8$ so that 
\begin{align*}
d(\pP, \pP')<\epsilon
\end{align*} where $d(\ast, \ast)$ is 
given in~\cite[Section~6]{Brs1}. 
Then the category
$\pP'(\phi)$ is the category of 
$Z_{f^{\ast}\omega'+D'}$-semistable object
in the quasi-abelian category
(cf.~the proof of~\cite[Theorem~7.1]{Brs1})
\begin{align}\label{qab}
\pP((\phi-\epsilon, \phi +\epsilon)). 
\end{align}
Also, similarly to the proof of Proposition~\ref{prop:lift}, 
Step~\ref{step:pro}, 
we see that $\sigma_{\omega', D'}$ is contained in
an open neighborhood of $\sigma_{\omega, D}$. 
Consequently, we may assume that $d(\qQ, \qQ')<\epsilon$,  and 
$\qQ'(\phi)$ is the category of $Z_{\omega', D'}$-semistable 
objects in the quasi-abelian category
\begin{align*}
\qQ((\phi-\epsilon, \phi +\epsilon)). 
\end{align*}

\begin{step}\label{step:AP}
Reduction of (\ref{sig'=sig}) to 
some statements on $\qQ((\phi-\epsilon, \phi+\epsilon))$, 
given as 
(\ref{inc:ob}) and (\ref{Fi:inc}). 
\end{step}
The relation (\ref{sig'=sig}) follows 
if we show 
\begin{align}\label{AsubP}
\aA_{f^{\ast}\omega'}^{\dag} \subset \pP'((0, 1])
\end{align} 
since both sides are hearts of bounded t-structures on $D^b \Coh(X)$. 
The inclusion (\ref{AsubP}) is equivalent to 
\begin{align}
\label{inc1}
\cC_{X/Y}^{0} \subset \pP'((0, 1]) \\
\label{inc2}
\dL f^{\ast} \aA_{\omega'} \subset \pP'((0, 1]). 
\end{align}
We first prove (\ref{inc1}). Since
$\cC_{X/Y}^{0}$ is the extension closure of a
finite number of objects, 
and $\Imm Z_{D}(\cC_{X/Y}^{0} \setminus \{0\})>0$, we have 
\begin{align}\label{theta:C}
\cC_{X/Y}^{0} \subset \pP([\theta, \theta'])
\end{align}
for some $\theta, \theta' \in (0, \pi)$. 
By shrinking (\ref{neighbor1}), (\ref{neighbor2})
if necessary, 
we may assume that 
\begin{align}\label{cond:theta}
[\theta-\epsilon, \theta' +\epsilon] \subset (0, \pi). 
\end{align}
Then the condition (\ref{inc1}) follows
since $d(\pP, \pP') < \epsilon$. 

The inclusion (\ref{inc2}) follows if 
we show $\dL f^{\ast} \qQ'(\phi) \subset \pP'(\phi)$
for any $0<\phi \le 1$. 
By the argument in Step~\ref{step:slicing}, it is enough to show that
\begin{align}\label{inc:ob}
\dL f^{\ast} \qQ((\phi-\epsilon, \phi +\epsilon)) \subset 
\pP((\phi-\epsilon, \phi +\epsilon))
\end{align}
and for any $M \in \qQ((\phi-\epsilon, \phi+\epsilon))$
and an exact sequence in $\pP((\phi-\epsilon, \phi+\epsilon))$
\begin{align}\label{FMF}
0 \to F_1 \to \dL f^{\ast}M \to F_2 \to 0 
\end{align}
we have 
\begin{align}\label{Fi:inc}
F_i \in \dL f^{\ast}\qQ((\phi-\epsilon, \phi+\epsilon)).
\end{align}

\begin{step}\label{step:LQP}
Proof of 
(\ref{inc:ob}) and (\ref{Fi:inc}).
\end{step}
By Lemma~\ref{rel:pull}, we have 
\begin{align}\label{fQP}
\dL f^{\ast} \qQ(\psi) \subset \pP(\psi)
\end{align}
for any $\psi \in \mathbb{R}$. Then 
the inclusion (\ref{inc:ob}) is obvious from 
(\ref{fQP}). 
Suppose that there is an exact sequence (\ref{FMF}) 
in $\pP((\phi-\epsilon, \phi+\epsilon))$. 
If $\phi \in (\epsilon, 1-\epsilon)$, we have 
$\pP((\phi-\epsilon, \phi+\epsilon)) \subset \aA_{\omega}(X/Y)$. 
By Lemma~\ref{closed:under},
it follows 
that
$F_i \in \dL f^{\ast}\aA_{\omega}$
for $i=1, 2$. 
Together with 
$F_i \in \pP((\phi-\epsilon, \phi+\epsilon))$
and the condition (\ref{fQP}), we conclude that
(\ref{Fi:inc}) holds. 

If $\phi \notin (\epsilon, 1-\epsilon)$, 
we have either $\phi \in (0, \epsilon)$
or $\phi \in (1-\epsilon, 1]$. 
These cases are treated similarly, so 
we assume $\phi \in (1-\epsilon, 1]$ for simplicity. 
By setting $\aA=\aA_{\omega}(X/Y)$, we have 
the exact sequence in $\aA$
\begin{align}\notag
0 &\to \hH_{\aA}^{-1}(F_1) \to \hH_{\aA}^{-1}(\dL f^{\ast}M) \to 
\hH_{\aA}^{-1}(F_2) \\
\label{long:ex}
&\to 
\hH_{\aA}^{0}(F_1) \to \hH_{\aA}^{0}(\dL f^{\ast}M) \to 
\hH_{\aA}^{0}(F_2) \to 0. 
\end{align}
Since $\dL f^{\ast}\aA_{\omega} \subset \aA_{\omega}(X/Y)$, we have 
\begin{align*}
\hH_{\aA}^{i} (\dL f^{\ast}M) \cong \dL f^{\ast} \hH_{\aA_{\omega}}^{i}(M)
\end{align*}
for all $i$. 
By Lemma~\ref{closed:under} and the exact
sequence (\ref{long:ex}), we have
\begin{align*}
\hH_{\aA}^{-1}(F_1), \hH_{\aA}^{0}(F_2) \in \dL f^{\ast}\aA_{\omega}.
\end{align*}
By the condition (\ref{cond:theta}), we have
\begin{align*}
\Hom(\cC_{X/Y}^{0}, \hH_{\aA}^{-1}(F_2))=0
\end{align*}
since $\hH_{\aA}^{-1}(F_2) \in \pP((0, \epsilon))$. 
This implies that $\hH_{\aA}^{-1}(F_2) \in \dL f^{\ast}\aA_{\omega}$, 
hence $\hH_{\aA}^{0}(F_1) \in \dL f^{\ast}\aA_{\omega}$ also
holds by Lemma~\ref{closed:under}
and the exact sequence (\ref{long:ex}).

We have shown that $\hH_{\aA}^{j}(F_i)$ is an object
in $\dL f^{\ast} \aA_{\omega}$ for all $i$ and $j$. 
Since the functor
\begin{align*}
\dL f^{\ast} \colon D^b \Coh(Y) \to D^b \Coh(X)
\end{align*}
is fully faithful, it follows that 
$F_i \in \dL f^{\ast} D^b \Coh(Y)$
for $i=1, 2$. 
Combined with (\ref{fQP}), we conclude that (\ref{Fi:inc}) holds. 

\begin{step}\label{step:lim2}
Partial extension of $\sigma$ to irrational points. 
\end{step}
Now we have proved (\ref{sig:lim2}). 
By the property (\ref{sig:lim2}), there is an open 
subset $U_Y \subset \overline{A}^{\dag}(Y)$, 
which contains all the rational points, such that 
the construction in Proposition~\ref{prop:Adag}
extends to a continuous map
\begin{align*}
\sigma_{U, Y} \colon U_Y \to \Stab(X)_{\mathbb{R}}. 
\end{align*}
We next show that $\sigma_{U, Y}$ extends to the 
$U_Y \cup A^{\dag}(Y)$. 
For an irrational point
$f^{\ast}\omega +D \in A^{\dag}(Y)$, 
let us take rational points 
$f^{\ast}\omega_j +D_j \in A^{\dag}(Y)$ for $j\ge 1$
which converge to $f^{\ast}\omega +D$. 
By Lemma~\ref{lem:Adag}, there is a constant $K_j>0$ so that
\begin{align*}
\frac{\lVert E \rVert}{\lvert Z_{f^{\ast}\omega_j +D_j}(E) \rvert} <K_j
\end{align*}
for any non-zero $\sigma_{f^{\ast}\omega +D_j}$-semistable 
object $E$. By the evaluation of $K_j$ in the proof of
Proposition~\ref{prop:Adag}, and the argument in the proof of 
Proposition~\ref{prop:lift}, Step~\ref{step:lim}, 
it is easy to see that the constant $K_j$ 
is taken to be independent of $j$. 
Therefore, as in the proof of Proposition~\ref{prop:lift}, Step~\ref{step:lim}, 
the limit exists
\begin{align}\label{lim:j}
\sigma_{f^{\ast}\omega +D} \cneq 
\lim_{j\to \infty} \sigma_{f^{\ast}\omega_j +D_j}. 
\end{align}

\begin{step}
Well-definedness of $\sigma_{f^{\ast}\omega +D}$. 
\end{step}
We claim that the limit (\ref{lim:j})
does not depend on a choice of $f^{\ast}\omega_j +D_j$.
Indeed if we take another
rational points
 $f^{\ast}\omega_j' +D_j' \in A^{\dag}(Y)$
which converge to $f^{\ast}\omega +D$,
then Theorem~\ref{thm:Brmain} implies the 
existence of
\begin{align*}
\widetilde{\sigma}_{f^{\ast}\omega_j'+ D_j'} \in \Stab(X)_{\mathbb{R}}
\end{align*}
for $j \gg 0$
whose image under $\Pi_{\mathbb{R}}$ is $f^{\ast}\omega_j'+D_j'$
and converge to $\sigma_{f^{\ast}\omega +D}$. 
Let us write
\begin{align*}
\sigma_{f^{\ast}\omega_j +D_j}=
(Z_{f^{\ast}\omega_j +D_j}, \pP_j) \\
\widetilde{\sigma}_{f^{\ast}\omega_j' +D_j'}
=(Z_{f^{\ast}\omega_j' +D_j'}, \pP_j')
\end{align*}
for slicings $\pP_j=\{\pP_j(\phi)\}_{\phi \in \mathbb{R}}$
and $\pP_j'=\{ \pP_j'(\phi)\}_{\phi \in \mathbb{R}}$. 
Then $d(\pP_j, \pP_j')$ goes to zero for $j\to \infty$. 
Also we can take $\theta, \theta' \in (0, \pi)$, 
which does not depend on $j$, so that
\begin{align*}
\cC_{X/Y}^{0} \subset \pP_j([\theta, \theta'])
\end{align*}
for all $j\gg 0$. 
If we take $0<\epsilon <1/8$ satisfying (\ref{cond:theta}), 
we have 
\begin{align}\label{inc:fin1}
\cC_{X/Y}^{0} \subset \pP_j'((0, 1])
\end{align}
for all $j\gg 0$ satisfying $d(\pP_j, \pP_j')<\epsilon$. 
Also, 
by the same argument of 
Proposition~\ref{prop:lift}, Step~\ref{sstep:lift},
one sees that the stability conditions 
$\sigma_{\omega_j, D_j}$ and $\sigma_{\omega_j', D_j'}$
converge to the same point in $\Stab(Y)_{\mathbb{R}}$. 
Using this fact
instead of
 the two sentences after (\ref{qab}), 
the same argument proving (\ref{inc2}) 
shows the inclusion
\begin{align}\label{inc:fin2}
\dL f^{\ast} \aA_{\omega_j'} \subset \pP_j'((0, 1]). 
\end{align}
The inclusions (\ref{inc:fin1}), (\ref{inc:fin2}) imply 
$\pP_j'((0, 1])=\aA_{f^{\ast}\omega_j'}^{\dag}$
for $j\gg 0$, 
which implies 
\begin{align*}
\widetilde{\sigma}_{f^{\ast}\omega_j' +D_j'}
=\sigma_{f^{\ast}\omega_j' +D_j'}, \quad j\gg 0. 
\end{align*}
Hence $\sigma_{f^{\ast}\omega +D}$ is independent of 
$f^{\ast}\omega_j'+D_j'$, and the assignment
$f^{\ast}\omega +D \mapsto \sigma_{f^{\ast}\omega +D}$ 
gives a continuous map from $U_Y \cup A^{\dag}(Y)$. 

\begin{step}
Extension of $\sigma$ to all the irrational points. 
\end{step}
We finally extend the map from 
$U_Y \cup A^{\dag}(Y)$ to the map from $\overline{A}^{\dag}(Y)$.
Let us take an irrational point $f^{\ast}\omega +D \in \overline{A}^{\dag}(Y)
\setminus A^{\dag}(Y)$, and 
rational points 
$f^{\ast}\omega_j +D_j \in \overline{A}^{\dag}(Y) \setminus A^{\dag}(Y)$
for $j\ge 1$
which converge to $f^{\ast}\omega +D$. Similarly to the argument of 
Step~\ref{step:lim2},
the limit $\sigma_{f^{\ast}\omega +D}$ of 
$\sigma_{f^{\ast}\omega_j +D_j}$ exists. 
Since we have shown that $A^{\dag}(Y)$ is continuously embedded
into $\Stab(X)_{\mathbb{R}}$, the same 
argument in the proof of Proposition~\ref{pf:lift}, Step~\ref{sstep:final}
shows that $\sigma_{f^{\ast}\omega +D}$ is independent of 
$f^{\ast}\omega_j +D_j$. 
Now the assignment $f^{\ast}\omega +D \mapsto \sigma_{f^{\ast}\omega +D}$
gives the desired continuous map (\ref{cont:liftY}). 

\end{proof}

Todai Institute for Advanced Studies (TODIAS), 

Kavli Institute for the Physics and 
Mathematics of the Universe, 

University of Tokyo, 
5-1-5 Kashiwanoha, Kashiwa, 277-8583, Japan.

\textit{E-mail address}: yukinobu.toda@ipmu.jp


\begin{thebibliography}{10}

\bibitem{AB}
D.~Arcara and A.~Bertram.
\newblock Bridgeland-stable moduli spaces for {K}-trivial surfaces.
{W}ith an appendix by {M}ax {L}ieblich.
\newblock {\em J.~Eur.~Math.~Soc.~(JEMS)}, Vol. 15, pp. 1--38, 2013.

\bibitem{ABCH}
D.~Arcara, A.~Bertram, I.~Coskun, and J.~Huizenga.
\newblock The minimal model program for {H}ilbert schemes of points on the
  projective plane and {B}ridgeland stability.
\newblock {\em Adv.~Math.~}, Vol. 235, pp. 580--626, 2013. 

\bibitem{BaMa2}
A.~Bayer and E.~Macri.
\newblock Projectivity and birational geometry of {B}ridgeland moduli spaces.
\newblock {\em preprint}.
\newblock arXiv:1203.4613.

\bibitem{BMT}
A.~Bayer, E.~Macri, and Y.~Toda.
\newblock Bridgeland stability conditions on 3-folds {I}:
  {B}ogomolov-{G}ieseker type inequalities.
\newblock {\em to appear in JAG}.
\newblock arXiv:1103.5010.

\bibitem{BBD}
A.~Beilinson, J.~Bernstein, and P.~Deligne.
\newblock Faisceaux pervers.
\newblock {\em Analysis and topology on singular spaces I, Asterisque}, Vol.
  100, pp. 5--171, 1982.

\bibitem{Br1}
T.~Bridgeland.
\newblock Flops and derived categories.
\newblock {\em Invent. Math}, Vol. 147, pp. 613--632, 2002.

\bibitem{Brs1}
T.~Bridgeland.
\newblock Stability conditions on triangulated categories.
\newblock {\em Ann.~of Math}, Vol. 166, pp. 317--345, 2007.

\bibitem{Brs2}
T.~Bridgeland.
\newblock Stability conditions on ${K}$3 surfaces.
\newblock {\em Duke Math.~J.~}, Vol. 141, pp. 241--291, 2008.

\bibitem{MVB}
M.~Van den Bergh.
\newblock Three dimensional flops and noncommutative rings.
\newblock {\em Duke Math.~J.~}, Vol. 122, pp. 423--455, 2004.

\bibitem{Dou2}
M.~Douglas.
\newblock Dirichlet branes, homological mirror symmetry, and stability.
\newblock {\em Proceedings of the 1998 ICM}, pp. 395--408, 2002.

\bibitem{GM}
S.~Gelfand and Y.~Manin.
\newblock {\em Methods of {H}omological {A}lgebra (2nd edition)}.
\newblock Springer Monographs in Mathematics. Springer-Verlag, 2003.

\bibitem{HRS}
D.~Happel, I.~Reiten, and S.~O. Smal$\o$.
\newblock {\em Tilting in abelian categories and quasitilted algebras}, Vol.
  120 of {\em Mem.~Amer.~Math.~Soc.~}.
\newblock 1996.

\bibitem{HMS}
D.~Huybrechts, E.~Macri, and P.~Stellari.
\newblock Stability conditions for generic {K}3 categories.
\newblock {\em Compositio Math}, Vol. 144, pp. 134--162, 2008.

\bibitem{Inaba}
M.~Inaba.
\newblock Toward a definition of moduli of complexes of coherent sheaves on a
  projective scheme.
\newblock {\em J.~Math.~Kyoto Univ.~}, Vol. 42-2, pp. 317--329, 2002.

\bibitem{K-S}
M.~Kontsevich and Y.~Soibelman.
\newblock Stability structures, motivic {D}onaldson-{T}homas invariants and
  cluster transformations.
\newblock {\em preprint}.
\newblock arXiv:0811.2435.

\bibitem{Maci}
A.~Maciocia.
\newblock Computing the walls associated to {B}ridgeland stability conditions
  on projective surfaces.
\newblock {\em preprint}.
\newblock arXiv:1202.4587.

\bibitem{MaciMe}
A.~Maciocia and C.~Meachan.
\newblock Rank one {B}ridgeland stable moduli spaces on a principally polarized
  abelian surfaces.
\newblock {\em Int.~Math.~Res.~Not.~}, no. 9, pp. 2054--2077, 2013.

\bibitem{MYY}
H.~Minamide, S.~Yanagida, and K.~Yoshioka.
\newblock Fourier-{M}ukai transforms and the wall-crossing behavior for
  {B}ridgeland's stability conditions.
\newblock {\em preprint}.
\newblock arXiv:1106.5217.

\bibitem{MYY2}
H.~Minamide, S.~Yanagida, and K.~Yoshioka.
\newblock Some moduli spaces of {B}ridgeland stability conditions.
\newblock {\em preprint}.
\newblock arXiv:1111.6187.

\bibitem{Tcurve2}
Y.~Toda.
\newblock Curve counting theories via stable objects~{II}. {DT}/nc{DT}/flop
  formula.
\newblock {\em J.~Reine Angew.~Math.~}, Vol. 675, pp. 1--51, 2013. 

\bibitem{Todext}
Y.~Toda.
\newblock Stability conditions and extremal contractions.
\newblock {\em Math.~Ann.~}, Vol. 357, pp. 631--685, 2013. 

\bibitem{Tst3}
Y.~Toda.
\newblock Moduli stacks and invariants of semistable objects on {K}3 surfaces.
\newblock {\em Adv.~Math.~}, Vol. 217, pp. 2736--2781, 2008.

\bibitem{Tst}
Y.~Toda.
\newblock Stability conditions and crepant small resolutions.
\newblock {\em Trans.~Amer.~Math.~Soc.}, Vol. 360, pp. 6149--6178, 2008.

\bibitem{Tcurve1}
Y.~Toda.
\newblock Curve counting theories via stable objects~{I}: {DT/PT}
  correspondence.
\newblock {\em J.~Amer.~Math.~Soc.~}, Vol.~23, pp. 1119--1157, 2010.

\bibitem{YY}
S.~Yanagida and K.~Yoshioka.
\newblock Bridgeland stabilities on abelian surfaces.
\newblock {\em preprint}.
\newblock arXiv:1203.0884.

\end{thebibliography}
\end{document}